\documentclass[12pt,reqno]{amsart}
\usepackage{amsmath}
\usepackage{amssymb}
\usepackage{fullpage}
\usepackage{algorithm}
\usepackage{algpseudocode}
\usepackage{graphics}
\usepackage[colorlinks,citecolor=blue]{hyperref}
\usepackage{booktabs}
\usepackage{cite}
\usepackage{enumitem}
\usepackage{tikz,pgfplots}
\usetikzlibrary{shapes.multipart,backgrounds,shapes,fit}
\usetikzlibrary{arrows,decorations.markings,shapes.arrows,arrows.meta}
\usepackage{stix}
\usepackage{subcaption}

\newcommand{\R}{\mathbb{R}}
\renewcommand{\SS}{\mathbb{S}}

\DeclareMathOperator{\aut}{\operatorname{Aut}}

\DeclareMathOperator{\bd}{\operatorname{bd}}
\DeclareMathOperator{\cl}{\operatorname{cl}}

\DeclareMathOperator{\Tr}{\operatorname{Tr}}

\DeclareMathOperator{\inte}{\operatorname{int}}

\DeclareMathOperator{\dom}{\operatorname{dom}}

\newcommand{\cT}{\mathcal{T}}
\newcommand{\Ke}{{K}_\text{exp}}

\newcommand{\norm}[1]{\left|\left|#1\right|\right|}

\setlength{\parindent}{0mm}
\setlength{\parskip}{6pt}
\newtheorem{thm}{Theorem}[section]
\newtheorem{lem}[thm]{Lemma}
\newtheorem{cor}[thm]{Corollary}
\newtheorem{dfn}[thm]{Definition}
\newtheorem{prop}[thm]{Proposition}

\theoremstyle{remark}
\newtheorem{examp}[thm]{Example} 

\newcommand{\iprod}[2]{\left\langle {#1}, {#2} \right\rangle}

\newcounter{algorithmctr}[section]
\renewcommand{\thealgorithmctr}{\thesection.\arabic{algorithmctr}}
    {\refstepcounter{algorithmctr}\begin{list}{}{%
        \setlength{\rightmargin}{.05\linewidth}%
        \setlength{\leftmargin}{.05\linewidth}}%
        \item[]{\setlength{\parskip}{0ex}\bigskip\par%
         \nopagebreak%
         \underline{{\bf Algorithm \thealgorithmctr.} \emph{#1.}}}}%
    {{\setlength{\parskip}{-1ex}\nopagebreak\smallskip\par} \end{list}}

\begin{document}

\title[Complexity of primal--dual ipms]
{New complexity bounds for primal--dual interior-point algorithms in conic optimization}

\author{Joachim Dahl \and Levent Tun\c{c}el \and Lieven Vandenberghe}
\thanks{Joachim Dahl: Cardinal Operations, Ltd., China (e-mail: dahl.joachim@gmail.com).\\
Levent Tun\c{c}el: Department of Combinatorics and Optimization, 
Faculty of Mathematics, University of Waterloo, Waterloo, Ontario N2L 3G1,
Canada (e-mail: levent.tuncel@uwaterloo.ca).
Research of this author was supported in part by Discovery Grants from 
NSERC and by U.S.\ Office of Naval Research under award number
N00014-18-1-2078.  \\
Lieven Vandenberghe: Department of Electrical and Computer
Engineering, UCLA, Los Angeles, CA 90095, USA (e-mail: vandenbe@ucla.edu). 
}

\date{September 9, 2025, revised: \today}

\begin{abstract}
We provide improved complexity results for symmetric primal--dual 
interior-point algorithms in conic optimization.
The results follow from new uniform bounds on a key complexity measure 
for primal--dual metrics at pairs of primal and dual points.
The complexity measure is defined as the largest eigenvalue
of the product of the Hessians of the primal and dual barrier functions,
normalized by the proximity of the points to the central path. 
For algorithms based on self-scaled barriers for symmetric cones, 
we determine the exact value of the complexity measure. 
In the significantly more general case of self-concordant barriers
with negative curvature, we provide the asymptotically tight upper bound 
of 4/3.  This result implies $O(\vartheta^{1/2}\ln(1/\epsilon))$ iteration 
complexity for a variety of symmetric (and some nonsymmetric)
primal--dual interior-point algorithms. 
Finally, in the case of general self-concordant barriers, we give 
improved bounds for some variants of the complexity measure.
\end{abstract}

\maketitle

\begin{section}{Introduction}
\label{sec:intro}

Primal--dual algorithms are among the most efficient, both in theory and in 
practice, and most robust algorithms for convex optimization.  
Primal--dual scalings (metrics) play a key role in the primal--dual 
algorithms, including interior-point algorithms. 
In this paper, we provide new bounds on complexity measures
for primal--dual scalings. 
The new bounds directly yield iteration complexity upper bounds
for a class of primal--dual interior-point algorithms.

Let $K \subset \R^n$ be a pointed, closed, and convex cone with nonempty 
interior. We call such cones \emph{regular}. If $K$ is a regular cone, then so is the dual cone
\begin{equation} \label{e-dual-cone}
K^* := \left\{ s \in \R^n: \iprod{s}{x} \geq 0 \;
 \textup{ for all } x \in K\right\}.
\end{equation}
Given a linear transformation $A: \R^n \to \R^m$, $b \in \R^m$, 
$c \in \R^n$, we define a conic optimization problem as
\renewcommand{\theequation}{P}%
\begin{equation}
\addtocounter{equation}{-1}
\label{e-primal}
\begin{array}{ll}
\mbox{minimize} & \iprod{c}{x} \\ 
\mbox{subject to} & A(x) =b \\ &  x \in K,
\end{array}
\end{equation}
with variable $x\in \R^n$.  The dual problem is defined as
\renewcommand{\theequation}{D}%
\begin{equation}
\addtocounter{equation}{-1}
\label{e-dual}
\begin{array}{ll}
\mbox{maximize} & b^\top y \\
\mbox{subject to} & A^*(y)+s = c \\ & s \in K^*,
\end{array}
\end{equation}
\renewcommand{\theequation}{\arabic{equation}}%
with variables $y\in\R^m$, $s\in\R^n$, where the linear transformation 
$A^* :\R^m \to \R^n$ (the \emph{adjoint} of $A$) is defined by the identity
\[
\iprod{A^*(y)}{x} = y^{\top} A(x), \,\,\,\, \forall x \in \R^n, \,\, 
 \forall y \in \R^m.
\]
Algorithms which simultaneously solve~\eqref{e-primal} 
and~\eqref{e-dual} by generating a sequence of primal--dual 
iterates $x^{(k)} \in \inte(K)$ and $s^{(k)} \in \inte(K^*)$ are relevant 
to this paper.  Among such algorithms, we focus on those which can be
interpreted as using a variable primal--dual scaling in each iteration. 
Specifically, a scaling is a symmetric positive definite matrix $T^{(k)}$
used in the equations 
\[
A(d_x) = 0, \qquad A^*(d_y) + d_s = 0, \qquad
T^{(k)}d_x +  d_s = r, 
\]
which determine the primal and dual search directions $d_x,d_y,d_s$
at iteration $k$.  
Primal--dual scalings are also known as primal--dual metrics because of 
their close connections with variable metric methods in the optimization
literature including the usage of variable metrics in quasi-Newton methods.

The scaling used in primal--dual interior-point methods for linear 
programming (with $K=\R^n_+$) is the diagonal matrix with diagonal entries 
$T^{(k)}_{ii} = s_i^{(k)}/x_i^{(k)}$; see \cite{Wri:97}.
For semidefinite and second-order cone programming, a wide variety
of scalings which generalize the primal--dual scaling for $K=\R^n_+$
have been studied and used successfully (see, for instance~\cite{Todd2001} and the references therein). 
The most complete theory for primal--dual scalings
is the theory of \emph{self-scaled barriers} due to Nesterov and 
Todd~\cite{NT1997,NT1998}, which covers the cases when the cone $K$ is 
the nonnegative orthant, $\R^n_+$, the cone of symmetric positive 
semidefinite matrices over reals, $\SS^n_+$, the cone of Hermitian 
positive semidefinite matrices over complex numbers, $\mathbb{H}^n_+$,
and the direct sum (or cartesian product) of second order cones. 
More precisely, the theory of self-scaled barriers treats the cases when $K$
is a \emph{symmetric} (i.e., \emph{homogeneous} and \emph{self-dual}) cone
\cite{ScA:01,ScA:03,AlG:03}.
The primal--dual scaling resulting from the theory of self-scaled barriers is
known as  the \emph{Nesterov--Todd scaling}, and is the scaling used 
in several popular software packages, including SeDuMi~\cite{Sturm1999}, 
SDPT3~\cite{SDPT3-2012}, MOSEK~\cite{ART:03}, and 
Clarabel~\cite{GoulartChen2024}. 

Since Nesterov and Todd's ground-breaking work on self-scaled barriers 
for symmetric cones, there has been substantial effort to extend some 
aspects of their primal--dual algorithms to nonsymmetric 
cones~\cite{Tuncel2001,NemirovskiT2005,Nesterov2012,MT2014,SkaajaYe2015,KarimiT2020,DahlA2022,BD2022,TV2023,ChG:25,ADV:10,PaY:22,Ser:15,GoulartChen2024}.
While not all nice properties of self-scaled barriers can be extended 
beyond symmetric cones (see \cite[Theorem 4.2]{Tuncel1998}, 
\cite[Lemma 6.4]{NT2016}), some of the critical, desired properties
of primal--dual scalings induced by self-scaled barriers were extended to 
all logarithmically homogeneous self-concordant barriers 
(LHSCBs) and all convex cones~\cite{Tuncel2001}. 
In \cite{Tuncel2001}, a complexity measure of these primal--dual scalings is
defined, which yields upper bounds on the iteration complexity of a class
of interior-point algorithms utilizing the primal--dual scalings. 
In this paper, this complexity measure (defined below in 
Section~\ref{subsec:1.1}) will be denoted by $\xi_F$, where $F$ is a 
$\vartheta$-logarithmically homogeneous self-concordant barrier 
($\vartheta$-LHSCB) for a convex cone $K$~\cite{NN1994}.  The complexity
measure $\xi_F$ is based on generalized eigenvalues of primal and dual
Hessians relative to proximity of the arguments of the Hessians to
the central path. The complexity measure
$\xi_F$ also admits a geometric interpretation
in terms of Dikin ellipsoids (defined in Section~\ref{subsec:LHSCB}). Locally, for
a fixed pair of primal--dual interior-points $(x,s)$, $\xi_F$ is related 
to the smallest blow-up factor for the Dikin ellipsoid at $x$ so that
it contains the dual of the Dikin ellipsoid at $s$ when centered at $x$ (also
see the detailed explanation in Section~\ref{s-neg-curv-xi} after equation
\eqref{e-hatcheck-T}).
The central role of $\xi_F$ in the complexity analysis 
from~\cite{Tuncel2001} can be summarized as follows.
\begin{itemize}
\item  In~\cite[Section 5]{Tuncel2001}, primal--dual interior-point
algorithms for conic linear programming are described that achieve an
\begin{equation} \label{e-xi-iter-bound}
O(1) \xi_F \vartheta^{1/2}\ln(1/\epsilon)
\end{equation}
iteration bound\footnote{Here, $\epsilon \in (0,1)$, is a user-defined 
desired accuracy; the algorithms are required to generate a pair of 
primal--dual feasible solutions whose duality gap is at most $\epsilon$ 
times the original duality gap of the starting feasible primal--dual pair.} 
where $O(1)$ denotes an absolute constant and $F$
is the barrier used in the algorithm. 
\item
It is known that $\xi_F \leq 9\vartheta(\vartheta-1)$ 
for any $\vartheta$-LHSCB $F$ \cite{Obro2019}.
Combined with the iteration complexity bound~(\ref{e-xi-iter-bound}), 
this gives an iteration bound
\[
O(1) \vartheta^{5/2} \ln(1/\epsilon).
\]

\item If we consider a class of conic problems~\eqref{e-primal} with
different cones $K$, each with a corresponding barrier $F$,
and $\xi_F$ is bounded by an absolute constant for all these barriers
(e.g., $\xi_F \leq 10$ for every such barrier),
then the iteration complexity for solving any instance in that class of
conic problems is 
\[
 O\left(\vartheta^{1/2}\ln(1/\epsilon)\right).
\]
In particular, when $\xi_F \leq 4/3$, 
then (using the analysis in~\cite[Section 5]{Tuncel2001})
$20\vartheta^{1/2}\ln(1/\epsilon)$ iterations suffice
(the constant 20 can be improved). 
\end{itemize}

The paper \cite{Tuncel2001} also describes the properties  
of primal--dual scalings that are needed for these complexity bounds to
apply.
At each iteration of these algorithms, a primal--dual scaling can
be selected from a convex set $\cT(x^{(k)},s^{(k)}; \hat \xi)$ where 
$\hat\xi\geq \xi_F$ is a constant algorithm parameter. 
The set $\cT(x^{(k)},s^{(k)};\hat \xi)$ of acceptable scalings is defined 
by two systems of linear equality constraints and two matrix inequalities 
(see definition~\eqref{e-T}).

This paper contains several new results on the complexity measure $\xi_F$
and the associated sets of primal--dual scalings $\cT(x,s;\hat\xi)$.
\begin{itemize}
\item It is known that for self-scaled barriers of symmetric cones,
$\xi_F  \leq 4/3$, and that the Nesterov--Todd scaling is an element
of $\cT(x,s;4/3)$ \cite[Theorem 6.1]{Tuncel2001}.
In Section~\ref{sec:negative_curvature} we study $\xi_F$
for LHSCBs with negative curvature, i.e., LHSCB barriers $F$ with the 
additional property that $\langle F'(x), y\rangle$ is a concave function 
of $x$ for any fixed $y\in K$.  
The class of convex cones which admit LHSCBs with negative curvature
is much larger than the symmetric cones.  For example,
any barrier 
$F(x) := G(B(x))$ for a regular cone $K := \{x \, : \, B(x) \in C\}$, where 
$B$ is a suitable linear function, $C$ is a symmetric cone, 
and $G$ a self-scaled barrier for $C$, is an LHSCB with negative curvature,
but not necessarily self-scaled.
We show that the bound $\xi_F \leq 4/3$ holds for the family
of LHSCBs with negative curvature.
We further prove that the primal--dual integral scaling from \cite{MT2014} is 
an element of $\cT(x,s;4/3)$.  An immediate consequence of this result 
is that the primal--dual potential reduction algorithm 
from~\cite{Tuncel2001},
when implemented using the integral primal--dual scalings~\cite{MT2014}, has
iteration complexity matching the current best bounds for conic 
optimization problems using cones with negative curvature barriers.
This result impacts other primal--dual algorithms, e.g., the 
predictor--corrector algorithm analyzed in~\cite{MT2014}. 
For negative curvature LHSCBs, our results imply that in some algorithms,
during some iterations, we may take 
significantly larger predictor steps and still maintain 
$O(\vartheta^{1/2}\ln(1/\epsilon))$ iteration complexity
bounds.

\item
In Section~\ref{sec:symmetric} we show that for optimal 
$\vartheta$-self-scaled barriers $F$ with $\vartheta\geq 2$,
\[
 \xi_F = \frac{(\tau_\vartheta +1)^2}{\tau_\vartheta (\tau_\vartheta +2)}, 
\quad \mbox{where\ } \tau_\vartheta := \sqrt{\frac{\vartheta}{\vartheta-1},}
\]
and that both the Nesterov--Todd scaling and the integral scaling 
from \cite{MT2014} are in $\cT(x,s;\xi_F)$. 
The limit of $\xi_F$ for $\vartheta \rightarrow +\infty$
is $4/3$, so the $4/3$ upper bound is asymptotically tight
(see Figure~\ref{f-xi-hat-check}).   

\item In the most general setting, with $K$ any regular cone
and $F$ any $\vartheta$-LHSCB, {\O}bro~\cite{Obro2019} has proved that 
$\xi_F \leq 9\vartheta(\vartheta-1)$ (assuming $\vartheta \geq 2$).  
In Section~\ref{sec:general_cones}, utilizing some results from~\cite{NT2016},
we obtain new upper bounds, for a relaxation $\widecheck{\xi}_F$ of $\xi_F$ (defined in
\eqref{s-neg-curv-xi}).
We also include numerical experiments which provide insights into the conditions 
which characterize good primal--dual scalings, and properties of pairs of 
primal and dual interior points where finding a good scaling is the most
difficult.
\end{itemize}

Section~\ref{sec:conclusion} contains some concluding remarks and future 
research directions.
\end{section}

\begin{section}{Self-concordant barriers and primal--dual scalings}
Throughout the paper we assume $K$ is a \emph{regular} cone 
in $\R^n$ and $K^*$ is the dual cone defined in~\eqref{e-dual-cone}.
We occasionally  use the generalized inequality notation 
$x\succeq_K 0$ and $u\succeq_{K^*} 0$ as an alternative for
$x\in K$ and $u \in K^*$.
Without subscript, the inequality $A \preceq B$, where $A$ and $B$ are
symmetric matrices, means that $B-A$ is positive semidefinite.

In this section we review some properties of logarithmically
homogeneous self-concordant barriers, and the basic definitions
and results on primal--dual scalings from \cite{Tuncel2001,MT2014}.

\begin{subsection}{Minkowski gauge and norm}
The \emph{Minkowski gauge} for the cone $K$ is the function
that assigns to $x\in\inte(K)$ and $h\in\R^n$ the value
\begin{equation}
\sigma_x(h) := \inf{\{ \beta \geq 0 : \beta x -h \in K\}}.
 \label{e-mink-def} 
\end{equation}
The function $\sigma_x(\cdot)$ is convex, nonnegative, and positively
homogeneous.
If $-h\in K$, then $\sigma_x(h)=0$.  
If $-h\not\in K$, then $\sigma_x(h)> 0$ and $1/\sigma_x(h)$ is the step size
from $x$ to the boundary of $K$ in the direction~$-h$.
We also note the identity
\begin{equation} \label{e-mink-translate}
1+\sigma_y(x-y) = \frac{1}{1-\sigma_x(x-y)} 
\quad \mbox{for all $x,y\in\inte(K)$}.
\end{equation}

At every point $x\in\inte(K)$, a local \emph{Minkowski norm} is defined as
$|h|_x := \max{\{ \sigma_x(h), \sigma_x(-h)\}}$.
In this paper, $\sigma_x(\cdot)$ and $|\cdot|_x$ always refer to the 
primal cone $K$.   Therefore, no special notation is needed to distinguish 
these functions from the Minkowski gauge and norm associated with the dual 
cone $K^*$.
\end{subsection}

\begin{subsection}{Logarithmically homogeneous self-concordant barriers}
\label{subsec:LHSCB}
A \emph{$\vartheta$-logarithmically homogeneous barrier} for $K$ is a 
closed convex function $F \in \mathcal{C}^3$ (thrice continuously differentiable), with domain 
$\dom F = \inte(K)$, which satisfies 
\begin{equation} \label{e-log-hom}
 F(tx) = F(x) -\vartheta\ln(t) \quad \mbox{for all $x\in\inte(K)$ 
and all $t>0$}.
\end{equation}
Such a barrier is called \emph{self-concordant} if
\begin{equation} \label{e-sc}
  F'''(x;h) \preceq 2\|h\|_x F''(x) \quad\mbox{for all $x\in\inte(K)$
and all $h\in \R^n$}, 
\end{equation}
where $F''(x)$ is the Hessian at $x$, $F'''(x)$ the third derivative, and   
\begin{equation} \label{e-local-norm}
 \|h\|_x := \sqrt{\langle F''(x)h,h\rangle}.
\end{equation}
On the left-hand side of~\eqref{e-sc}, $F'''(x;h)$ is the directional 
third derivative of $F$ at $x$, in the direction~$h$, 
\[
 F'''(x;h) := \left. \frac{d}{d\alpha} F''(x+\alpha h) \right|_{\alpha =0}.
\]
In later sections, we will refer to the following properties of 
$\vartheta$-logarithmically homogeneous self-concordant barriers 
($\vartheta$-LHSCBs) \cite{NN1994,Nesterov2018}. For a more detailed
exposure to analysis of interior-point algorithms based on
general self-concordant barriers, 
see \cite{NN1994,Renegar2001,Nesterov2018}.
\begin{itemize}
\item Logarithmic homogeneity~\eqref{e-log-hom} implies the identities
\begin{equation} 
\label{e-log-hom-props}
 F'(tx) = \frac{1}{t} F'(x), \qquad 
 F''(tx) = \frac{1}{t^2} F''(x) 
\end{equation}
for all $x\in\inte(K)$ and $t>0$, and
\begin{equation} 
\label{e-log-hom-props-2}
 \langle F'(x), x\rangle = -\vartheta, \qquad
 F''(x)x = -F'(x), \qquad  F'''(x;x) = -2F''(x)
\end{equation}
for all $x\in\inte(K)$. 

\item The negative gradient at $x\in\inte(K)$ is in the interior of $K^*$:
$-F'(x) \in \inte(K^*)$ for all $x\in\inte(K)$. Moreover, $-F': \inte(K) \to \inte(K^*)$
is a bijection (see, for instance~\cite[Proposition 5.1]{Tuncel1998}).

\item The Hessian $F''(x)$ is positive definite for all $x\in\inte(K)$.
Therefore the function $\|\cdot\|_x$ defined in~\eqref{e-local-norm} is 
a norm.

\item The \emph{Dikin ellipsoid} centered at $x$ and with radius one,
is contained in $K$:    
\[
 E_{x,1} \subseteq K,
\]
where $E_{x,r} := \{ y \in \R^n : \|y-x\|_x \leq r \}$ is called the Dikin ellipsoid 
with center $x$ and radius $r$.  This property can also be expressed as
\begin{equation} \label{e-dikin}
 \sigma_x(h) \leq \|h\|_x \quad \mbox{for all $x\in\inte(K)$ and
 $h\in\R^n$}.
\end{equation}
Applying this to $-h$, we also have $\sigma_x(-h) \leq \|h\|_x$
and therefore $|h|_x \leq \|h\|_x$.

\item The variation of the Hessian on $\inte(K)$ is limited by the 
inequalities
\[
(1-\|y-x\|_x)^2 F''(x) \preceq F''(y) 
\preceq \frac{1}{(1-\|y-x\|_x)^2} F''(x).
\]
These matrix inequalities hold for all 
$x,y\in\inte(K)$ with $\|y-x\|_x < 1$.
\end{itemize}

The \emph{dual barrier} associated with $F$ is defined as 
\[
  F_*(s) = \sup_{x\in\inte(K)}\left\{-\langle s,x\rangle -F(x)\right\}.
\]
If $F$ is a $\vartheta$-LHSCB for $K$, then $F_*$ is a $\vartheta$-LHSCB 
for $K^*$.  The dual barrier of the dual barrier $F_*$ is equal to the 
original barrier $F$.
The gradients and Hessians of $F$ and $F_*$ are related by the identities
\begin{equation} \label{e-dual-grad}
 F'_*(-F'(x)) = -x, \quad F'(-F_*'(s)) = -s, \quad
\end{equation}
and
\begin{equation} \label{e-dual-hess}
 F''_*(-F'(x)) = F''(x)^{-1}, \quad  F''(-F_*'(s)) = F_*''(s)^{-1}. 
\end{equation}
We often use the notation
\[
 \tilde s := -F'(x), \qquad \tilde x := -F'_*(s),
 \qquad 
 \mu := \frac{\langle s,x\rangle}{\vartheta}, \qquad
 \tilde\mu 
 := \frac{\langle \tilde s, \tilde x\rangle}{\vartheta} 
 = \frac{\langle F'(x), F_*'(s)\rangle}{\vartheta}. 
\]
The points $\tilde s, \tilde x$ are called the \emph{shadows}
of $x$ and $s$, respectively.
The notation $\tilde s, \tilde x, \mu, \tilde \mu$ is used when it is 
clear from the context what the corresponding points $x$ and $s$ are. 
Otherwise, we add $s$ and $x$ as explicit arguments of functions
$\mu(x,s)$ and $\tilde \mu(x,s)$ defined as
\[
 \mu(x,s) := \frac{\langle s,x\rangle}{\vartheta}, \qquad
 \tilde\mu(x,s) := \frac{\langle F'(x), F_*'(s)\rangle}{\vartheta}. 
\]
\end{subsection}

\begin{subsection}{Central points and proximity measures}
Points $x\in\inte(K)$ and $s\in \inte(K^*)$ form a \emph{central pair}
if $s$ is a multiple of $-F'(x)$. 
Using~\eqref{e-log-hom-props-2}, we see that the multiple is given by the 
positive quantity $\mu$ defined above. Thus, $x,s$ form a central pair if and only
if $s = -\mu F'(x) = \mu \tilde s$.
From~\eqref{e-dual-grad} and logarithmic homogeneity, 
the following four statements are equivalent (the last one is proved in
Lemma~\ref{lem:intro1}):
\[
 x = \mu \tilde x, \qquad \tilde s = \tilde \mu s, \qquad 
 \tilde x = \tilde\mu x, \qquad \mu \tilde{\mu}=1.
\]
The motivation for this terminology is that
if in addition $x$ and $s$ are feasible in their respective optimization problems,
then $(x,s)$ is a central pair if and only if $(x,s)$ lies on the 
\emph{primal--dual central path} (with the central path parameter set to $\mu(x,s)$). 

Proximity measures that measure the deviation from centrality appear 
throughout the literature on interior-point methods.
In this paper, the \emph{gradient proximity measure} 
$\gamma_\mathrm{G}(x,s)$ and 
the \emph{uniform proximity measure} $\gamma_\infty(x,s)$,
which are defined as
\begin{equation} \label{e-gamma-g}
 \gamma_\mathrm G(x,s) := \vartheta (\mu\tilde\mu -1), \qquad
 \gamma_\infty(x,s) := \sigma_x(\mu\tilde x) -1 
\end{equation}
will be important.  
The suitability of $\gamma_\mathrm G$ and $\gamma_\infty$ as proximity
measures is stated in the following lemma.
\begin{lem}
\label{lem:intro1}(Nesterov and Todd~\cite[pp. 343--344]{NT1998}, 
 \cite[Lemma 4.1]{Tuncel2001})
If $F$ is a $\vartheta$-LHSCB for the regular cone $K$, then
\[
\vartheta \gamma_\infty(x,s) \geq \gamma_\mathrm G(x,s) \geq 0, 
 \quad \forall x \in \inte(K), \,\, \forall s \in \inte(K^*).
\]
Moreover, $\gamma_\infty(x,s) = \gamma_\mathrm G(x,s) = 0$ if and
only if $x,s$ are a central pair.
\end{lem}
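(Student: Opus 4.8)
The plan is to prove the three assertions in order: first the inequality $\vartheta\gamma_\infty(x,s) \ge \gamma_{\mathrm G}(x,s)$, then the nonnegativity $\gamma_{\mathrm G}(x,s) \ge 0$, and finally the characterization of the equality case. For the first inequality I would argue directly from the Minkowski gauge and the membership $-F'(x) \in K^*$. Put $\beta := \sigma_x(\mu\tilde x)$; since $K$ is closed, $\beta x - \mu\tilde x \in K$, so pairing with $\tilde s = -F'(x) \in K^*$ gives $\beta\langle\tilde s,x\rangle \ge \mu\langle\tilde s,\tilde x\rangle$. Using $\langle\tilde s,x\rangle = \langle -F'(x),x\rangle = \vartheta$ from \eqref{e-log-hom-props-2} and $\langle\tilde s,\tilde x\rangle = \vartheta\tilde\mu$ by the definition of $\tilde\mu$, this says exactly $\sigma_x(\mu\tilde x) \ge \mu\tilde\mu$, i.e.\ $\gamma_\infty(x,s) \ge \gamma_{\mathrm G}(x,s)/\vartheta$. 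This step is routine.

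For $\gamma_{\mathrm G}(x,s) \ge 0$, i.e.\ $\mu\tilde\mu \ge 1$, the idea is to obtain an additive bound first and then homogenize it into a multiplicative one. Convexity of $F$ along the segment from $x$ to $\tilde x = -F'_*(s)$ makes $t \mapsto \langle F'(x + t(\tilde x - x)),\, \tilde x - x\rangle$ nondecreasing on $[0,1]$; comparing its values at $t=0$ and $t=1$, and using $\langle F'(x),x\rangle = -\vartheta$, $\langle F'(\tilde x),\tilde x\rangle = -\vartheta$ together with $-F'(\tilde x) = s$ (valid because $-F'$ and $-F'_*$ are mutually inverse bijections between $\inte(K)$ and $\inte(K^*)$ via \eqref{e-dual-grad}), yields $\langle s,x\rangle + \langle\tilde s,\tilde x\rangle \ge 2\vartheta$, that is, $\mu(x,s) + \tilde\mu(x,s) \ge 2$. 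Now apply this bound to the pair $(tx,s)$ for an arbitrary $t > 0$: logarithmic homogeneity \eqref{e-log-hom-props} gives $\mu(tx,s) = t\mu$ and $\tilde\mu(tx,s) = \tilde\mu/t$, so $t\mu + \tilde\mu/t \ge 2$ for every $t > 0$; minimizing the left-hand side over $t$ gives $2\sqrt{\mu\tilde\mu} \ge 2$, hence $\mu\tilde\mu \ge 1$. Combined with the first step this establishes $\vartheta\gamma_\infty \ge \gamma_{\mathrm G} \ge 0$.

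For the equality case: if $(x,s)$ is a central pair then $s = -\mu F'(x) = -F'(x/\mu)$, so $\tilde x = x/\mu$, whence $\mu\tilde\mu = 1$ (so $\gamma_{\mathrm G} = 0$) and $\mu\tilde x = x$, which gives $\sigma_x(\mu\tilde x) = \sigma_x(x) = 1$ (since $\beta x - x = (\beta-1)x \in K$ iff $\beta \ge 1$), so $\gamma_\infty = 0$; the same computation delivers at once the equivalent forms $x = \mu\tilde x$, $\tilde s = \tilde\mu s$, $\tilde x = \tilde\mu x$. Conversely, $\gamma_\infty(x,s) = 0$ forces $\gamma_{\mathrm G}(x,s) = 0$ through the chain just proved, and $\mu\tilde\mu = 1$ forces equality in $t\mu + \tilde\mu/t \ge 2$ at $t^\ast := \sqrt{\tilde\mu/\mu}$, hence equality in the monotonicity step applied to $(t^\ast x,\,s)$; since $F$ is strictly convex (its Hessian is positive definite on $\inte(K)$) this can only occur when $t^\ast x = \tilde x$, which — using once more that $-F'$ is a bijection onto $\inte(K^*)$ — is precisely the central-pair relation $s = \mu\tilde s$.

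I expect the homogenization in the second step — converting $\mu + \tilde\mu \ge 2$ into $\mu\tilde\mu \ge 1$ by replacing $x$ with $tx$ and optimizing over $t>0$ — to be the one genuinely clever move; the gauge estimate and the convexity inequality are standard, and the only delicate bookkeeping is tracing the equality case back through the scaling to the strict convexity of $F$.
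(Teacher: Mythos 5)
Your proof is correct, and the gauge inequality $\vartheta\gamma_\infty \geq \gamma_{\mathrm G}$ is established exactly as in the paper (pairing $\sigma_x(\mu\tilde x)x - \mu\tilde x \in K$ with $\tilde s \in K^*$). Where you diverge genuinely is in the nonnegativity step. The paper applies gradient monotonicity directly to the pair $(x,\mu\tilde x)$, first noticing the algebraic identity $\vartheta(\mu\tilde\mu - 1) = \langle F'(\mu\tilde x)-F'(x),\mu\tilde x - x\rangle$, which makes $\gamma_{\mathrm G}\geq 0$ a one-line consequence of monotonicity and also hands over the equality case (strict convexity forces $x=\mu\tilde x$) for free. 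You instead apply monotonicity along the segment from $x$ to $\tilde x$, which yields only the additive bound $\mu + \tilde\mu \geq 2$, and then recover the multiplicative bound $\mu\tilde\mu \geq 1$ by the scaling substitution $x \mapsto tx$ and optimizing over $t>0$. Your route is a two-step detour where the paper's is one step, but the additive inequality $\mu+\tilde\mu\geq 2$ is arguably the more naturally discovered statement, and the homogenization trick is a clean, reusable idea; the paper's route requires spotting the rescaled comparison point $\mu\tilde x$ in advance. Your treatment of the equality case is correspondingly more involved — you trace equality through the optimized scaling $t^\ast = \sqrt{\tilde\mu/\mu}$ back to strict monotonicity of $F'$ at $(t^\ast x,\tilde x)$ and then unwind via the bijection $-F'$ — but it closes correctly: $t^\ast x = \tilde x$ with $\mu\tilde\mu=1$ does give $t^\ast = \tilde\mu = 1/\mu$, hence $s = -\mu F'(x)$.
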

For future reference, we add that the uniform proximity measure can also 
be written as
\begin{equation}
\label{e-sigma-ineqs-2}
\gamma_\infty(x,s) = \sigma_x(\mu\tilde x-x)
 = \frac{\sigma_{\mu\tilde x}(\mu\tilde x - x)}
 {1-\sigma_{\mu\tilde x}(\mu\tilde x-x)}.
\end{equation}
The first equality follows from 
\[
\sigma_x(\mu\tilde x -x)
= \inf{\{ \beta\geq 0 : (\beta +1)x-\mu\tilde x \in K\}}
= \sigma_x(\mu\tilde x) -1,
\]
since $\sigma_x(\mu\tilde x) \geq 1$.
The second equality in \eqref{e-sigma-ineqs-2} follows 
from~\eqref{e-mink-translate}.
\end{subsection}

\begin{subsection}{Primal--dual scalings}
\label{subsec:1.1}
In this section we review some definitions and results
from~\cite{Tuncel2001, MT2014}.
For each pair $x\in\inte(K)$, $s\in\inte(K^*)$, a set of local 
primal--dual scalings is defined as
\begin{equation}
\cT(x,s;\xi) := 
\left\{ T\in \SS_{++}^n : Tx = s, \; T\tilde x = \tilde s, \;
 \frac{1}{\xi\delta_F(x,s)} F''(x) \preceq T \preceq 
  \xi\delta_F(x,s) F_*''(s)^{-1} \right\},
\label{e-T}
\end{equation}
where $\xi$ is a parameter and
\begin{equation} \label{e-delta-F}
\delta_F(x,s)
:= \frac{\gamma_\mathrm G(x,s)+1}{\mu}
= \frac{\vartheta(\mu\tilde{\mu} -1)+1}{\mu}
= \iprod{F'(x)}{F'_*(s)} - \frac{\vartheta(\vartheta-1)}{\iprod{s}{x}}.
\end{equation}
This definition of $\cT$ corresponds to the set $\mathcal{T}_2$ from these 
earlier papers,
where the subscript 2 was added to highlight the usage of second-order 
information on the barrier functions $F$ and $F_*$.

The smallest value of $\xi$ for which the set $\cT(x,s;\xi)$ is nonempty 
for every pair $x\in\inte(K)$ and $s\in\inte(K^*)$ is denoted by $\xi_F$:
\[
\xi_F := \inf 
\left\{\xi : {\mathcal{T}}(x,s;\xi) \neq \emptyset,  
\forall (x,s) \in \inte(K) \oplus \inte(K^*) \right\}.
\]
The importance of these definitions lies in the complexity analysis 
in~\cite{Tuncel2001}.   If $\xi_F$ is bounded above by an absolute
constant (i.e., independent of the data of the problem instance,
dimension of the cone $K$, and  the parameter $\vartheta$ of the barrier 
function $F$),
then the second-order  algorithms in~\cite{Tuncel2001} and many other related algorithms
based on primal--dual scalings are guaranteed to generate an $\epsilon$-optimal 
solution in $O\left(\vartheta^{1/2} \ln(1/\epsilon)\right)$ iterations.
One of the key results in the theory of self-scaled barriers is the
following.
\begin{thm}(Nesterov and Todd~\cite{NT1997}, see Theorem 6.1 in~\cite{Tuncel2001})
\label{thm:intro1}
For every symmetric cone $K$ and for every $\vartheta$-self-scaled barrier 
$F$ for $K$, we have
\[
\xi_F \leq \frac{4}{3}.
\]
Moreover, the Nesterov--Todd scaling is in $\cT(x,s;4/3)$ for every pair 
$x,s\in\inte(K)$. 
\end{thm}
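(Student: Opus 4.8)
The plan is to exhibit the Nesterov--Todd scaling as an element of $\cT(x,s;4/3)$ for every $x\in\inte(K)$ and $s\in\inte(K^*)$; since this makes $\cT(x,s;4/3)$ nonempty for all such pairs, it gives $\xi_F\le 4/3$, so the whole statement reduces to this membership claim. Recall that for a self-scaled barrier the \emph{scaling point} $w=w(x,s)\in\inte(K)$ is the unique solution of $F''(w)x=s$, and the Nesterov--Todd scaling is $T:=F''(w)\in\SS_{++}^n$ \cite{NT1997}. The condition $Tx=s$ holds by the definition of $w$. For the other requirements I would use the identity $F_*(F''(w)x)=F(x)-2F(w)-\vartheta$ that characterizes self-scaledness. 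Differentiating it once in $x$ gives $F''(w)F_*'(F''(w)x)=F'(x)$; evaluating at the scaling point, $F''(w)F_*'(s)=F'(x)$, i.e.\ $T\tilde x=F''(w)(-F_*'(s))=-F'(x)=\tilde s$, which is the second linear condition. Differentiating a second time yields the key identity
\[
 F''(w)\,F_*''(s)\,F''(w)=F''(x).
\]

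Next I would use this identity to collapse the two matrix inequalities in \eqref{e-T} into one scalar condition. Write $A:=F''(x)$, $W:=F''(w)$, $B:=F_*''(s)^{-1}$, so the identity reads $WB^{-1}W=A$, i.e.\ $W^{-1}AW^{-1}=B^{-1}$. Setting $M:=W^{-1/2}AW^{-1/2}$ we then get $W^{-1/2}BW^{-1/2}=(W^{-1/2}AW^{-1/2})^{-1}=M^{-1}$. Conjugating by $W^{-1/2}$ (a congruence, hence order preserving), the two inequalities $\tfrac1{\xi\delta}A\preceq W$ and $W\preceq\xi\delta B$ --- with $\xi=4/3$ and $\delta:=\delta_F(x,s)$, which is positive since $\gamma_{\mathrm G}(x,s)\ge 0$ by Lemma~\ref{lem:intro1} and $\delta_F=(\gamma_{\mathrm G}+1)/\mu$ --- become $\tfrac1{\xi\delta}M\preceq I$ and $I\preceq\xi\delta M^{-1}$, and each is equivalent to $M\preceq\xi\delta I$. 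Hence $T\in\cT(x,s;4/3)$ is equivalent to the single bound
\[
 \lambda_{\max}\!\left(F''(w)^{-1}F''(x)\right)\ \le\ \tfrac43\,\delta_F(x,s).
\]

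To establish this I would pass to the Euclidean Jordan algebra underlying $K$. By the structure theory of self-scaled barriers it suffices to treat a simple factor with $F=-\ln\det$, the Jordan determinant (Cartesian products and positive rescalings of $F$ are handled by routine reductions); then $F''(y)=P(y)^{-1}$ for the quadratic representation $P$, $\tilde x=s^{-1}$, and $F_*''(s)^{-1}=P(s)$. Put $z:=P(s^{1/2})x\in\inte(K)$ with Jordan eigenvalues $\zeta_1\le\cdots\le\zeta_\vartheta$ ($\vartheta=\rank K$). A direct computation with the fundamental formula $P(P(a)b)=P(a)P(b)P(a)$ identifies $w$ and shows that $F''(w)^{-1}F''(x)=P(w)P(x)^{-1}$ is similar to $P(z^{-1/2})$, whose eigenvalues are the numbers $(\zeta_i\zeta_j)^{-1/2}$ for $i\le j$; the largest is $1/\zeta_1$, so $\lambda_{\max}(F''(w)^{-1}F''(x))=1/\zeta_1$. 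Moreover $\iprod{s}{x}=\tr z=\sum_i\zeta_i$ and $\iprod{F'(x)}{F_*'(s)}=\iprod{x^{-1}}{s^{-1}}=\tr z^{-1}=\sum_i 1/\zeta_i$, so, from the definition of $\delta_F$ in \eqref{e-delta-F},
\[
 \delta_F(x,s)=\frac{\gamma_{\mathrm G}(x,s)+1}{\mu}=\sum_{i=1}^{\vartheta}\frac1{\zeta_i}-\frac{\vartheta(\vartheta-1)}{\sum_{j=1}^{\vartheta}\zeta_j}.
\]
The required bound is thus the elementary inequality: for all reals $0<\zeta_1\le\cdots\le\zeta_\vartheta$,
\[
 \frac1{\zeta_1}\le\frac43\!\left(\sum_{i=1}^{\vartheta}\frac1{\zeta_i}-\frac{\vartheta(\vartheta-1)}{\sum_j\zeta_j}\right).
\]

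Finally I would prove this scalar inequality directly. Putting $S:=\sum_{i\ge 2}\zeta_i$, the Cauchy--Schwarz inequality gives $\sum_{i\ge 2}1/\zeta_i\ge(\vartheta-1)^2/S$, so it is enough to check $\tfrac{\vartheta(\vartheta-1)}{\zeta_1+S}\le\tfrac1{4\zeta_1}+\tfrac{(\vartheta-1)^2}{S}$; clearing denominators, this reads $\vartheta-1\le\tfrac14+\tfrac{S}{4\zeta_1}+\tfrac{(\vartheta-1)^2\zeta_1}{S}$, which follows at once from the AM--GM bound $\tfrac{S}{4\zeta_1}+\tfrac{(\vartheta-1)^2\zeta_1}{S}\ge\vartheta-1$. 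The constant $4/3$ is exactly the threshold at which this last step still closes, in agreement with Section~\ref{sec:symmetric}, where the exact value of $\xi_F$ for fixed $\vartheta$ is determined and shown to tend to $4/3$ as $\vartheta\to\infty$. The main obstacle is the third step: the Jordan-algebraic bookkeeping yielding the description of $w$ and the identity $\lambda_{\max}(F''(w)^{-1}F''(x))=1/\zeta_1$; once that reduction is in place, the remaining estimate is routine.
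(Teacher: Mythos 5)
Your reduction is sound and mirrors the structure the paper itself uses in Section~\ref{sec:symmetric} (Theorem~\ref{thm:3.5} and the per--cone computations): the Hessian scaling identity $F''(w)F_*''(s)F''(w)=F''(x)$ collapses the two matrix inequalities to the single condition $\lambda_{\max}(F''(w)^{-1}F''(x))\le\tfrac43\delta_F(x,s)$, the Jordan--algebra bookkeeping identifies the left side with $1/\zeta_1$ (which is precisely $\sigma_x(\tilde x)$ in the paper's notation), and your Cauchy--Schwarz + AM--GM chain cleanly closes the scalar inequality, with the $+\tfrac14$ slack correctly predicting the strict gap $\rho_\vartheta<4/3$ obtained in Theorem~\ref{thm-ss-barrier}. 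You also (correctly) use $\delta_F=\sum_i 1/\zeta_i-\vartheta(\vartheta-1)/\sum_j\zeta_j$, a minus sign, even though \eqref{e-delta-F} in the text has a typographical $+$.

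The one place where the argument is not yet complete is the sentence ``Cartesian products and positive rescalings of $F$ are handled by routine reductions.'' The Hauser--G\"uler classification gives $F=c_0-\sum_i c_i\ln\det_{K_i}$ with independent weights $c_i\ge 1$ and $\vartheta=\sum_i c_ir_i$. Cartesian products with all $c_i=1$ reduce exactly to your scalar inequality, and a \emph{uniform} rescaling $F\mapsto cF$ does too (one checks $cP-r(cr-1)/Q-\bigl(P-r(r-1)/Q\bigr)=(c-1)(P-r^2/Q)\ge 0$ by Cauchy--Schwarz). But with \emph{mixed} weights the eigenvalue and $\delta_F$ expressions become $\max_{i,j}c_i/\zeta^{(i)}_j$ and $\sum_i c_i^2\sum_j 1/\zeta^{(i)}_j-\vartheta(\vartheta-1)/\sum_{i,j}\zeta^{(i)}_j$, and the substitution $\eta=\zeta/c_i$ turns the target into a weighted inequality $\max_k 1/\eta_k\le\tfrac43\bigl(\sum_k d_k/\eta_k-\vartheta(\vartheta-1)/\sum_k d_k\eta_k\bigr)$ with $d_k\ge 1$, $\sum_k d_k=\vartheta$, which your Cauchy--Schwarz + AM--GM step does not address as written and which is not monotone in the weights in any obvious way. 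This is exactly the point where the paper's route is cleaner: Theorem~\ref{thm:3.5} proves $\xi_F=\sup_{x,s}\sigma_x(\tilde x)/\delta_F(x,s)=\sup_{x,s}(\gamma_\infty+1)/(\gamma_{\mathrm G}+1)$ for \emph{all} self-scaled barriers via Lemma~\ref{lem:3.5} (the boundary-cone argument, no explicit eigenvalue formula needed), and then $\le 4/3$ follows from the negative-curvature proximity bound \eqref{e-prox-bound2-simpler}, uniformly in the weights $c_i$. So: for optimal self-scaled barriers your proof is correct and genuinely more hands-on; to cover the full statement you should either push the weighted scalar inequality through or replace the last step with the proximity-measure argument.
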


A unique property of the Nesterov--Todd scaling is that it has
the form $T = F''(w)$ for some (unique) point $w\in\inte(K)$.  For non-symmetric
cones, no scaling of the form $T = F''(w)$ exists that satisfies the two 
requirements 
\begin{equation} \label{e-Tx}
  Tx = s, \qquad T\tilde x= \tilde s
\end{equation}
(see \cite[section 6]{MT2014}).
Motivated by this fact, the papers \cite{Tuncel2001, MT2014} describe several methods to construct 
scalings that satisfy these conditions but are not of the form $T=F''(w)$.
For our purposes the \emph{integral scaling} from \cite[section 3]{MT2014}
will be the most relevant.  The integral scaling is defined as
\begin{equation} \label{e-T-integral}
 T := \mu \int_0^1 F''((1-\alpha)x  + \alpha (\mu\tilde x)) \, d\alpha.
\end{equation}
For a proof that this integral satisfies~(\ref{e-Tx}), 
see \cite[Theorem 3.1]{MT2014}.
\end{subsection}

\end{section}

\begin{section}{Barriers with negative curvature}
\label{sec:negative_curvature}

Theorem~\ref{thm:intro1} states that $\xi_F \leq 4/3$ for self-scaled
barriers $F$ and that the Nesterov--Todd scaling is in $\cT(x,s;4/3)$ 
for all $x, s \in \inte(K$).
In this section we extend this result to self-concordant barriers
with negative curvature.
These barriers form a special class of self-concordant barrier functions 
that strictly includes the self-scaled barriers and are defined for
convex cones that are strictly more general than homogeneous cones.
In Sections~\ref{s-neg-curv-def}--\ref{s-neg-curv-prox},
we define negative curvature barriers and discuss the most 
important properties.  
Section~\ref{s-neg-curv-xi} contains the main result on 
the complexity measure and primal--dual metrics.

\begin{subsection}{Negative curvature} \label{s-neg-curv-def}
\begin{dfn} \label{d-neg-curv}
A logarithmically homogeneous barrier $F$ for a regular convex cone $K$
has \emph{negative curvature} if for every $u \in K$, the function
\begin{equation} \label{e-neg-curv-fu}
f_u(x) := \langle F'(x), u \rangle,
\end{equation}
with $\dom f_u = \inte(K)$, is concave in $x$.
\end{dfn}
Several equivalent definitions follow from the various 
characterizations of concavity of the function~$f_u$.
For example, Jensen's inequality
\[
 f_u(\alpha x+ (1-\alpha) y) \geq 
 \alpha f_u(x) + (1-\alpha) f_u(y) 
\quad \forall u\in K, \; \forall x,y\in\inte{K}, \; \forall \alpha\in [0,1],
\]
is equivalent to the vector inequality
\begin{equation} \label{e-neg-curv-jensen}
 F'(\alpha x + (1-\alpha)y) \succeq_{K^*} 
 \alpha F'(x) + (1-\alpha) F'(y) 
\quad \forall x,y\in\inte{K}, \; \forall \alpha\in [0,1].
\end{equation}
Hence negative curvature means that the gradient of $F$
is $K^*$-concave.

Negative curvature barriers were studied by G\"{u}ler in \cite{Guler1997}
and by Nesterov and Tun\c{c}el 
in \cite{NT2016}.
They include the self-scaled barriers for symmetric 
cones~\cite[Lemma 3.1]{NT1997}, but are much more general.  
For example, composition with a linear mapping preserves
negative curvature, but in general not self-scaledness.
Therefore, the barrier
\[
 F(x) := -\ln\det(x_1A_1 + x_2A_2 + \cdots + x_nA_n),
\]
where $A_1,\ldots,A_n \in \SS^m $ are linearly independent, 
is an $m$-logarithmically homogeneous barrier with negative curvature
for the regular cone 
$K := \{x \in \R^n : x_1A_1 + x_2A_2 + \cdots + x_n A_n \succeq 0\}$
(assuming that there exists $\bar{x} \in \R^n$ such that $\sum_{j=1}^n \bar{x}_j A_j  \in \SS^m_{++}$).
\end{subsection}

\begin{subsection}{Bounds on derivatives and function values}
From the values and derivatives of a negative-curvature barrier $F$ 
at a point $x\in\inte(K)$, one obtains bounds on the values and 
derivatives at other points in the interior of $K$.
For the second derivatives, this property is referred to as the 
\emph{long-step Hessian estimation property} in \cite{MT2014}.
The inequalities listed in the next proposition are known 
to hold for self-scaled barriers for symmetric cones
\cite{NT1997,NT1998}, but are derived here under the weaker assumptions 
of logarithmic homogeneity and negative curvature. See, also \cite[Section 7]{Guler1997}.

\begin{prop}
Let $F$ be a logarithmically homogeneous barrier for the regular cone $K$.
If $F$ has negative curvature, then the following properties hold.
\begin{itemize}
\item For all $x\in\inte(K)$ and all $h$,
\begin{equation} \label{e-neg-curv-f'''}
-2 \sigma_x(h) F''(x)  \preceq F'''(x; h) \preceq 2 \sigma_x(-h) F''(x).
\end{equation}

\item
For all $x,y\in\inte(K)$,
\begin{equation} \label{e-neg-curv-hess}
\frac{1}{(1+\sigma_x(y-x))^2} F''(x) \preceq F''(y) \preceq
\frac{1}{(1-\sigma_x(x-y))^2} F''(x).
\end{equation}

\item 
For all $x,y\in\inte(K)$,
\begin{equation} \label{e-neg-curv-grad}
\frac{\|y-x\|_x^2}{1+\sigma_x(y-x)} 
\leq \langle F'(y)-F'(x), y-x \rangle \leq
\frac{\|y-x\|_x^2}{1-\sigma_x(x-y)}.
\end{equation}
\end{itemize}
\end{prop}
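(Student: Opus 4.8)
The plan is to prove the three displays in the order stated, deriving each from its predecessor, with the segment $x_t := (1-t)x + ty$ (for $t\in[0,1]$, which lies in $\inte(K)$) as the common device. The starting observation is that negative curvature is equivalent to the pointwise inequality $\langle F'''(x;v)v,u\rangle \le 0$ for all $v\in\R^n$ and all $u\in K$ — this is just the assertion that the Hessian of the concave function $f_u$ is negative semidefinite. To prove~\eqref{e-neg-curv-f'''}, fix $x$ and $h$ and set $\beta := \sigma_x(-h)$, so that $u := \beta x + h\in K$ by definition of the Minkowski gauge. Using the symmetry of the third derivative as a trilinear form, $\langle F'''(x;h)v,v\rangle = \langle F'''(x;v)v,h\rangle$, together with $\langle F'''(x;x)v,v\rangle = -2\langle F''(x)v,v\rangle$ from~\eqref{e-log-hom-props-2}, negative curvature applied to $u$ gives, for every $v$,
\[
0 \;\ge\; \langle F'''(x;v)v,\, \beta x + h\rangle \;=\; -2\beta\,\langle F''(x)v,v\rangle + \langle F'''(x;h)v,v\rangle ,
\]
which is exactly $F'''(x;h)\preceq 2\sigma_x(-h)F''(x)$; replacing $h$ by $-h$ and using $F'''(x;-h) = -F'''(x;h)$ gives the matching lower bound in~\eqref{e-neg-curv-f'''}.

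For~\eqref{e-neg-curv-hess}, fix $v\ne 0$ and put $\psi(t) := \langle F''(x_t)v,v\rangle$, which is positive and continuously differentiable on $[0,1]$ since $F''\succ 0$ on $\inte(K)$ and $F$ is thrice continuously differentiable. Then $\psi'(t) = \langle F'''(x_t;y-x)v,v\rangle$, so applying~\eqref{e-neg-curv-f'''} at $x_t$ in the direction $y-x$ yields the scalar differential inequality $-2\sigma_{x_t}(y-x)\,\psi(t)\le \psi'(t)\le 2\sigma_{x_t}(x-y)\,\psi(t)$. The two Minkowski gauges along the segment follow from~\eqref{e-mink-translate}: writing $\lambda := \sigma_x(x-y)\in[0,1)$ and $\nu := \sigma_x(y-x)$, and using $\sigma_x(x-x_t) = t\lambda$, $\sigma_x(x_t-x) = t\nu$ together with the identity applied to the pairs $(x,x_t)$ and $(x_t,x)$, one obtains $\sigma_{x_t}(x-y) = \lambda/(1-t\lambda)$ and $\sigma_{x_t}(y-x) = \nu/(1+t\nu)$. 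Dividing the differential inequality by $\psi$ and integrating $\frac{d}{dt}\ln\psi$ over $[0,1]$ then gives $(1+\nu)^{-2}\psi(0)\le\psi(1)\le(1-\lambda)^{-2}\psi(0)$, which is~\eqref{e-neg-curv-hess} since $v$ was arbitrary.

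Finally, \eqref{e-neg-curv-grad} follows by integrating~\eqref{e-neg-curv-hess}. From $\langle F'(y)-F'(x),y-x\rangle = \int_0^1 \langle F''(x_t)(y-x),y-x\rangle\,dt = \int_0^1 \|y-x\|_{x_t}^2\,dt$ and the bounds $(1+t\nu)^{-2}\|y-x\|_x^2 \le \|y-x\|_{x_t}^2 \le (1-t\lambda)^{-2}\|y-x\|_x^2$ (which are~\eqref{e-neg-curv-hess} applied to the pair $(x,x_t)$), the elementary integrals $\int_0^1 (1+t\nu)^{-2}\,dt = (1+\nu)^{-1}$ and $\int_0^1 (1-t\lambda)^{-2}\,dt = (1-\lambda)^{-1}$ yield exactly the two sides of~\eqref{e-neg-curv-grad}.

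I expect the only genuine subtlety to lie in~\eqref{e-neg-curv-hess}: one must keep the asymmetry between $\sigma_x(x-y)$ and $\sigma_x(y-x)$ straight — it is precisely this asymmetry, propagated along the segment into the distinct integrands $\lambda/(1-t\lambda)$ and $\nu/(1+t\nu)$, that makes the two sides of the estimate genuinely different — and one must confirm that the integration is legitimate, which holds because $\psi>0$ (positive definiteness of $F''$) and $\lambda<1$ whenever $y\in\inte(K)$, so every integrand stays finite on $[0,1]$. The substitution $u = \beta x + h$ in the proof of~\eqref{e-neg-curv-f'''} and the one-dimensional integrals afterwards are otherwise routine.
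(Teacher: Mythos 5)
Your proof is correct and follows essentially the same route as the paper: both establish \eqref{e-neg-curv-f'''} by applying concavity of $f_u$ at $u=\sigma_x(-h)x+h$ and using the symmetry of $F'''$ together with $F'''(x;x)=-2F''(x)$, then integrate the resulting logarithmic-derivative bound along the segment to get \eqref{e-neg-curv-hess}, and integrate once more to obtain \eqref{e-neg-curv-grad}. The only cosmetic differences are your use of the parametrization $x_t=(1-t)x+ty$ with the shorthand $\lambda,\nu$ and your explicit appeal to~\eqref{e-mink-translate} for the Minkowski gauges along the segment, which the paper leaves implicit.
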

Note that $0 \leq \sigma_x(x-y) < 1$ for all $x,y\in \inte(K)$, so the 
bounds in~\eqref{e-neg-curv-hess} and~\eqref{e-neg-curv-grad} are well 
defined.

\begin{proof}
We start with the second inequality in~(\ref{e-neg-curv-f'''}). 
Suppose $x\in\inte(K)$.  By definition of $\sigma_x(-h)$, the vector
$u = \sigma_x(-h)x + h$ is in $K$.  
Since $F$ has negative curvature, the function 
$f_u = \langle F'(\cdot), u\rangle$ is concave.  The Hessian 
$f_u''(x) = F'''(x;u)$ at $x$ satisfies
\[
0 \succeq F'''(x; u)  
  = \sigma_x(-h) F'''(x; x) + F'''(x; h)  
  = -2\sigma_x(-h) F''(x) + F'''(x; h).
\]
The last step uses $F'''(x;x) = -2F''(x)$, which follows from logarithmic 
homogeneity (see~\eqref{e-log-hom-props-2}).
The first inequality in~(\ref{e-neg-curv-f'''}) is the second inequality 
applied to $-h$, since $F'''(x;-h) = -F'''(x;h)$.

To prove~(\ref{e-neg-curv-hess}), we define $h:=x-y$ and 
$g(\alpha) := \langle F''(x-\alpha h) v, v\rangle$ 
for fixed nonzero $v$.   From~(\ref{e-neg-curv-f'''}),
\[
\frac{-2\sigma_x(-h)}{1+\alpha \sigma_x(-h)} =
-2\sigma_{x-\alpha h}(-h) 
 \leq \frac{g'(\alpha)}{g(\alpha)} \leq 
 2\sigma_{x-\alpha h}(h)
= \frac{2\sigma_x(h)}{1-\alpha \sigma_x(h)}.
\]
The right-hand side and left-hand side both hold for all
nonnegative $\alpha$ such that $\alpha\sigma_x(h) < 1$.
Integration gives
\[
-2\ln(1+\alpha\sigma_x(-h))
\leq \ln\frac{g(\alpha)}{g(0)} \leq -2\ln(1-\alpha\sigma_x(h)).
\]
For $\alpha=1$ and $h=x-y$ (since $y \in\inte(K)$, we have
$\sigma_x(h) < 1$), this proves~(\ref{e-neg-curv-hess}).

Next, to show~(\ref{e-neg-curv-grad}) we use the expression
\[
-\langle F'(y) - F'(x), h \rangle 
 = \int_0^1 \langle F''(x-\alpha h)h, h \rangle d\alpha
\]
and bound the right-hand side by integrating~(\ref{e-neg-curv-hess})
(and using $\sigma_x(\alpha v) = \alpha \sigma_x(v)$ for nonnegative 
$\alpha$):
\[
\|h\|_x^2 \int_0^1 \frac{1}{(1+\alpha \sigma_x(-h))^2} \, d\alpha
\leq -\langle F'(y) - F'(x), h \rangle 
\leq \|h\|_x^2 \int_0^1 \frac{1}{(1-\alpha\sigma_x(h))^2} \, d\alpha.
\]
The integral on the left is $1/(1+\sigma_x(-h))$ and the integral on the
right is $1/(1-\sigma_x(h))$.
\end{proof}
\end{subsection}

We mention two more results which provide bounds on Hessians of
negative-curvature barriers. 
\begin{prop}
\label{prop:4.3}(See also \cite[Corollary 7.2]{Guler1997})
Let $F$ be a logarithmically homogeneous barrier for the regular cone
$K$.  If $F$ has negative curvature, then for all $x,y\in\inte(K)$,
\begin{equation} \label{e-geneig-sigma}
 F''(y) \preceq \sigma_y(x)^2 F''(x).
\end{equation}
In particular, if $y-x \in K$, then $\sigma_y(x) \leq 1$  and therefore
$F''(y) \preceq F''(x)$.
\end{prop}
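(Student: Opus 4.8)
The plan is to reduce the inequality $F''(y) \preceq \sigma_y(x)^2 F''(x)$ to the already-established long-step Hessian estimate \eqref{e-neg-curv-hess}. The key observation is that the second inequality in \eqref{e-neg-curv-hess}, namely $F''(y) \preceq (1-\sigma_x(x-y))^{-2} F''(x)$, becomes the assertion we want once we identify $(1-\sigma_x(x-y))^{-1}$ with $\sigma_y(x)$. So the first step is to verify this identification using the Minkowski gauge identity \eqref{e-mink-translate}. Setting $\beta = \sigma_x(x-y)$, identity \eqref{e-mink-translate} reads $1 + \sigma_y(x-y) = 1/(1-\sigma_x(x-y))$, so $1/(1-\sigma_x(x-y)) = 1 + \sigma_y(x-y)$. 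It then remains to show $\sigma_y(x) = 1 + \sigma_y(x-y)$, which follows from positive homogeneity and the translation argument already used in the paper right after \eqref{e-sigma-ineqs-2}: $\sigma_y(x) = \sigma_y(y + (x-y)) = \inf\{\beta \ge 0 : \beta y - y - (x-y) \in K\} = \inf\{\beta \ge 0 : (\beta-1)y - (x-y) \in K\} = 1 + \sigma_y(x-y)$, valid since $\sigma_y(x) \ge 1$ (because $x - y$ need not lie in $-K$, and more to the point $\sigma_y(y) = 1$ so the gauge of any interior point relative to $y$ along the ray through $y$ is at least... ) — one should double-check the direction of this last inequality, but it holds because $y \in \inte(K)$ forces $\sigma_y(x) \ge \sigma_y(y) = 1$ is not immediate; rather, $\sigma_y(x) \ge 1$ holds exactly because $x \in \inte(K)$ and the argument mirrors $\sigma_x(\tilde x) \ge \tilde\mu$ in Lemma~\ref{lem:intro1}. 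Concretely, $\sigma_y(x)\,y - x \in K$ and pairing with $-F'(y) \in \inte(K^*)$ gives $0 \le \langle -F'(y), \sigma_y(x) y - x\rangle = \vartheta(\sigma_y(x) - \langle -F'(y),x\rangle/\vartheta)$, and since $\langle -F'(y), x \rangle \ge \vartheta$ would need its own justification, the cleanest route is simply: $\sigma_y(x) \ge 1$ iff $x \notin \inte(K) - \text{(ray)}$... I would instead just invoke that $\sigma_x(x-y) \in [0,1)$ (stated in the paper immediately after the Proposition), so $1 - \sigma_x(x-y) > 0$ and $1/(1-\sigma_x(x-y)) \ge 1$, and then the chain of gauge identities forces $\sigma_y(x) = 1/(1-\sigma_x(x-y)) \ge 1$ automatically — no separate argument needed.

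So the streamlined sequence is: (i) from \eqref{e-mink-translate} with the roles of $x,y$ as written, obtain $1/(1 - \sigma_x(x-y)) = 1 + \sigma_y(x-y)$; (ii) by the translation/homogeneity argument, $1 + \sigma_y(x-y) = \sigma_y(x)$; (iii) substitute into the right-hand inequality of \eqref{e-neg-curv-hess} to conclude $F''(y) \preceq \sigma_y(x)^2 F''(x)$. For the ``in particular'' clause: if $y - x \in K$, then $\sigma_y(x) = \inf\{\beta \ge 0 : \beta y - x \in K\} \le 1$ since $\beta = 1$ already works ($y - x \in K$), giving $F''(y) \preceq F''(x)$ directly from \eqref{e-geneig-sigma}.

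I do not anticipate a genuine obstacle here — the result is essentially a repackaging of \eqref{e-neg-curv-hess} via a gauge identity. The one place that requires care is step (ii), ensuring the translation computation for $\sigma_y(x)$ is carried out with the correct membership condition and that one is entitled to pull out the ``$+1$'' (this needs $\sigma_y(x) \ge 1$, which, as noted, comes for free from $\sigma_x(x-y) < 1$ once the identity chain is in place). A secondary subtlety is making sure the hypothesis actually used — negative curvature — enters only through \eqref{e-neg-curv-hess}, which it does, so no new appeal to Definition~\ref{d-neg-curv} is required beyond what the previous proposition already supplies.
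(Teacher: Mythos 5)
There is a genuine gap. Your entire argument rests on identifying $\sigma_y(x)$ with $1/(1-\sigma_x(x-y))$, and you concede this requires $\sigma_y(x)\geq 1$ (needed so that the infimum defining $\sigma_y(x)=\inf\{\beta\geq 0:\beta y-x\in K\}$ is attained at some $\beta\geq 1$, which is what lets you subtract off the ``$+1$''). You then claim $\sigma_y(x)\geq 1$ ``comes for free'' from $\sigma_x(x-y)<1$ and the identity chain. This is circular: identity \eqref{e-mink-translate} always holds, but it only equates $1/(1-\sigma_x(x-y))$ with $1+\sigma_y(x-y)$, and the further step $1+\sigma_y(x-y)=\sigma_y(x)$ is exactly what fails when $\sigma_y(x)<1$. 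And $\sigma_y(x)<1$ genuinely occurs for interior points: take $K=\R_+$, $x=1$, $y=2$, giving $\sigma_y(x)=1/2$, $\sigma_y(x-y)=\sigma_x(x-y)=0$, so the chain yields $1/(1-0)=1\neq 1/2=\sigma_y(x)$. In this regime your substitution into \eqref{e-neg-curv-hess} produces only $F''(y)\preceq F''(x)$, which is strictly weaker than the asserted $F''(y)\preceq\sigma_y(x)^2F''(x)$; the conclusion itself still holds (in the 1D example it is an equality), but your derivation does not reach it.

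The missing idea is logarithmic homogeneity of the Hessian. The paper's proof applies \eqref{e-neg-curv-hess} not to the pair $(x,y)$ but to $(x,\sigma_y(x)y)$: writing $h:=x-\sigma_y(x)y$, the definition of $\sigma_y(x)$ gives $-h\in K$, hence $\sigma_x(h)=0$, so the right-hand bound of \eqref{e-neg-curv-hess} collapses to $F''(\sigma_y(x)y)\preceq F''(x)$ with no blow-up factor; then $F''(\sigma_y(x)y)=\sigma_y(x)^{-2}F''(y)$ by \eqref{e-log-hom-props} yields the result. That scaling step is not optional decoration; it is precisely what makes the bound valid and tight in all regimes, including $\sigma_y(x)<1$. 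To repair your proof you would need to split into cases or, better, adopt the same rescaling, at which point you are essentially reproducing the paper's argument.
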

\begin{proof}
Define $h = x - \sigma_y(x)y$.
Since $-h\in K$ (by definition of $\sigma_y(x)$), we have 
$\sigma_{x}(h) = 0$.
From  the right-hand inequality of~(\ref{e-neg-curv-hess}), 
\[
\frac{1}{\sigma_y(x)^2}F''(y)
= F''(\sigma_y(x)y)  \preceq 
 \frac{1}{(1-\sigma_{x}(h))^2} F''(x) = F''(x).
\]
\end{proof}
The inequality~\eqref{e-geneig-sigma} gives a bound on the generalized  
eigenvalues of the matrix pair $F''(y)$, $F''(x)$:
\begin{equation} \label{e-gevd-sigma}
\lambda_\mathrm{max}(F''(x)^{-1}F''(y)) \leq \sigma_y(x)^2.
\end{equation}
Later, we will see that for self-scaled barriers equality holds; 
see Lemma~\ref{lem:3.5}.

\begin{prop}
\label{prop:4.4}
Let $F$ be a logarithmically homogeneous barrier for the regular cone
$K$.  For $x,y\in\inte(K)$, define
\[
 G := \int_0^1 F''((1-\alpha) x +\alpha y) \, d\alpha.
\]
If $F$ has negative curvature, then
\begin{equation} \label{e-hess-ineq-1}
\frac{1}{1+\sigma_x(y-x)} F''(x) \preceq G \preceq 
\frac{1}{1-\sigma_x(x-y)} F''(x)
\end{equation}
and
\begin{equation} \label{e-hess-ineq-2}
 (1-\sigma_x(x-y)) F''(y) \preceq G \preceq (1+\sigma_x(y-x)) F''(y).
\end{equation}
\end{prop}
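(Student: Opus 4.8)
The plan is to derive the first pair of inequalities \eqref{e-hess-ineq-1} by integrating the Hessian estimate \eqref{e-neg-curv-hess} along the segment from $x$ to $y$, and then to obtain the second pair \eqref{e-hess-ineq-2} from the first by combining the symmetry of $G$ in its two endpoints with the Minkowski translation identity \eqref{e-mink-translate}.

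For \eqref{e-hess-ineq-1}, I would set $z(\alpha):=(1-\alpha)x+\alpha y$, so that $z(\alpha)-x=\alpha(y-x)$ and hence, by positive homogeneity of $\sigma_x(\cdot)$, $\sigma_x(z(\alpha)-x)=\alpha\,\sigma_x(y-x)$ and $\sigma_x(x-z(\alpha))=\alpha\,\sigma_x(x-y)$. Applying \eqref{e-neg-curv-hess} to the pair $x,z(\alpha)$ gives
\[
\frac{1}{(1+\alpha\sigma_x(y-x))^2}\,F''(x)\preceq F''(z(\alpha))\preceq\frac{1}{(1-\alpha\sigma_x(x-y))^2}\,F''(x),
\]
valid for every $\alpha\in[0,1]$ because $\sigma_x(x-y)<1$ on $\inte(K)\times\inte(K)$. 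Integrating over $\alpha\in[0,1]$ and using the elementary identities $\int_0^1(1+a\alpha)^{-2}\,d\alpha=(1+a)^{-1}$ and $\int_0^1(1-b\alpha)^{-2}\,d\alpha=(1-b)^{-1}$ with $a=\sigma_x(y-x)$ and $b=\sigma_x(x-y)$ then yields \eqref{e-hess-ineq-1} exactly.

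For \eqref{e-hess-ineq-2}, the key observation is that the change of variable $\alpha\mapsto 1-\alpha$ gives $G=\int_0^1 F''((1-\alpha)y+\alpha x)\,d\alpha$, i.e.\ $G$ is unchanged when $x$ and $y$ are interchanged. Applying \eqref{e-hess-ineq-1} with the roles of $x$ and $y$ reversed therefore gives
\[
\frac{1}{1+\sigma_y(x-y)}\,F''(y)\preceq G\preceq\frac{1}{1-\sigma_y(y-x)}\,F''(y).
\]
By \eqref{e-mink-translate}, $1+\sigma_y(x-y)=1/(1-\sigma_x(x-y))$, so the left coefficient equals $1-\sigma_x(x-y)$; and \eqref{e-mink-translate} with $x$ and $y$ interchanged gives $1+\sigma_x(y-x)=1/(1-\sigma_y(y-x))$, so the right coefficient equals $1+\sigma_x(y-x)$. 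This is precisely \eqref{e-hess-ineq-2}.

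I do not expect a genuine obstacle here: the only points requiring care are checking that the upper integrand stays integrable on $[0,1]$ (which reduces to the standing bound $\sigma_x(x-y)<1$) and recognizing that \eqref{e-hess-ineq-2} is a relabeling of \eqref{e-hess-ineq-1} via \eqref{e-mink-translate} rather than a fresh computation. An alternative, equally short route for \eqref{e-hess-ineq-2} is to integrate \eqref{e-neg-curv-hess} centered at $y$ and then convert the $\sigma_y$-gauges to $\sigma_x$-gauges through \eqref{e-mink-translate}; the symmetry argument simply avoids repeating the integration.
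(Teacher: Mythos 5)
Your proof is correct and follows essentially the same route as the paper: integrate the pointwise Hessian estimate \eqref{e-neg-curv-hess} along the segment to get \eqref{e-hess-ineq-1}, then interchange $x$ and $y$ and apply the translation identity \eqref{e-mink-translate} to convert the $\sigma_y$-gauges into $\sigma_x$-gauges for \eqref{e-hess-ineq-2}. The only cosmetic difference is that you parameterize via $z(\alpha)$ while the paper sets $h=x-y$, and you make the symmetry of $G$ under $x\leftrightarrow y$ explicit whereas the paper leaves it implicit.
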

\begin{proof}
We integrate the bounds in~(\ref{e-neg-curv-hess}):
\[
\left( \int_0^1 \frac{1}{(1+\alpha \sigma_x(-h))^2} d\alpha \right) F''(x) 
\preceq
G 
 \preceq \left( \int_0^1 \frac{1}{(1-\alpha \sigma_x(h))^2} d\alpha
 \right) F''(x) 
\]
where $h=x-y$.   Evaluating the integrals gives~\eqref{e-hess-ineq-1}.
Switching $x$ and $y$ in these inequalities gives
\[
 \frac{1}{1+\sigma_y(x-y)} F''(y) \preceq G \preceq 
 \frac{1}{1-\sigma_y(y-x)} F''(y).
\]
The inequalities~(\ref{e-hess-ineq-2}) follow from 
the identity~\eqref{e-mink-translate}.
\end{proof}

\begin{subsection}{Proximity measures} \label{s-neg-curv-prox}
The next proposition, which gives an inequality between the gradient and 
uniform proximity measures, was proved for self-scaled barriers 
in \cite[Theorem 4.2]{NT1998}.

\begin{prop}
Let $F$ be a $\vartheta$-LHSCB with negative curvature. Then 
for all $x\in\inte(K)$, $s\in\inte(K^*)$,
\begin{equation} \label{e-prox-bound2}
 \gamma_\mathrm{G}(x,s) \geq
\frac{\gamma_\infty(x,s)^2}{1+\gamma_\infty(x,s)}.
\end{equation}
\end{prop}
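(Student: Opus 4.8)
The plan is to relate both proximity measures to the single scalar quantity $t := \sigma_x(\mu\tilde x - x) = \gamma_\infty(x,s)$, which records how far the central shadow $\mu\tilde x$ sits from $x$ in the Minkowski gauge. First I would apply the gradient inequality~\eqref{e-neg-curv-grad} with the substitution $y := \mu\tilde x$, so that $y - x = \mu\tilde x - x$ and $\sigma_x(y-x) = t$. This yields
\[
 \langle F'(\mu\tilde x) - F'(x),\, \mu\tilde x - x\rangle \geq
 \frac{\|\mu\tilde x - x\|_x^2}{1 + t}.
\]
The left-hand side is exactly $\gamma_\mathrm G(x,s) = \vartheta(\mu\tilde\mu - 1)$: this is the computation already carried out in the proof of Lemma~\ref{lem:intro1}, where it is shown that $\langle F'(\mu\tilde x) - F'(x),\, \mu\tilde x - x\rangle = \vartheta(\mu\tilde\mu-1)$, using only logarithmic homogeneity and $\tilde s = -F'(x)$. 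So the task reduces to proving the lower bound $\|\mu\tilde x - x\|_x^2 \geq t^2$, since then $\gamma_\mathrm G(x,s) \geq t^2/(1+t) = \gamma_\infty(x,s)^2/(1+\gamma_\infty(x,s))$, which is the claim.

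The remaining inequality $\|\mu\tilde x - x\|_x \geq \sigma_x(\mu\tilde x - x)$ would ordinarily go the wrong way — the Dikin-ellipsoid bound~\eqref{e-dikin} gives $\sigma_x(h) \leq \|h\|_x$, not the reverse. The point is that for the specific direction $h := \mu\tilde x - x$ we get the opposite estimate because $+h$ points \emph{outward} relative to $x$. Concretely, $\sigma_x(-h) = \sigma_x(x - \mu\tilde x)$, and by~\eqref{e-mink-translate} (or equivalently the computation in~\eqref{e-sigma-ineqs-2}) we have $\sigma_x(x - \mu\tilde x) = t/(1+t) < 1 \leq t$ when... — actually I should be careful here: $t/(1+t) \le t$ always, but what I really want is $\sigma_x(-h) \le \sigma_x(h)$ so that $|h|_x = \sigma_x(h)$. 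Since $\sigma_x(h) = t$ and $\sigma_x(-h) = t/(1+t) \leq t$, indeed $|h|_x = \max\{\sigma_x(h),\sigma_x(-h)\} = \sigma_x(h) = t$. Then I would invoke the relation between the Minkowski norm $|h|_x$ and the Hessian norm $\|h\|_x$ in the \emph{negative-curvature} setting. This is the crux: for a general self-concordant barrier one only has $|h|_x \le \|h\|_x$, but negative curvature should give the reverse inequality $\|h\|_x \le |h|_x$, or at least $\|\mu\tilde x - x\|_x \ge \sigma_x(\mu\tilde x - x)$ along this direction.

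The main obstacle is therefore establishing $\|\mu\tilde x - x\|_x \geq \sigma_x(\mu\tilde x - x)$ for negative-curvature barriers. I expect this to follow from Proposition~\ref{prop:4.3}: applying~\eqref{e-geneig-sigma} with the pair $(x, \mu\tilde x)$ gives $F''(\mu\tilde x) \preceq \sigma_{\mu\tilde x}(x)^2 F''(x)$, and combined with the Hessian variation bound~\eqref{e-neg-curv-hess} and the gauge identities~\eqref{e-mink-translate}, one can convert a bound on $\sigma$-quantities into a bound on $\|\cdot\|_x$-quantities. An alternative route, likely cleaner, is to write $\|h\|_x^2 = \langle F''(x)h, h\rangle$ with $h = \mu\tilde x - x$ and use $F''(x)x = -F'(x) = \tilde s$ together with $\langle \tilde s, \tilde x\rangle = \vartheta\tilde\mu$ and the Cauchy–Schwarz-type inequality in the $F''(x)$-inner product against $x$ itself: $\|h\|_x \cdot \|x\|_x \geq \langle F''(x)h, x\rangle$. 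Since $\|x\|_x^2 = \langle F''(x)x,x\rangle = -\langle F'(x),x\rangle = \vartheta$ and $\langle F''(x)h,x\rangle = \langle -F'(x), \mu\tilde x - x\rangle = \vartheta(\mu\tilde\mu - 1)$, this gives $\|h\|_x \geq \sqrt{\vartheta}\,(\mu\tilde\mu-1) = \gamma_\mathrm G/\sqrt{\vartheta}$ — but that only recovers Lemma~\ref{lem:intro1}, not the sharper $\|h\|_x \geq t$. So the genuine work is extracting the gauge lower bound from negative curvature, and I would spend the bulk of the proof there, probably by differentiating $\sigma_{x+\alpha h}(\pm h)$ along the segment and integrating, mirroring the derivation of~\eqref{e-neg-curv-hess}, to show that negative curvature forces the Dikin ellipsoid at $x$ to \emph{contain} (not just be contained in) the relevant gauge ball along outward directions.
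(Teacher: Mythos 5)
Your setup is correct and mirrors the paper's proof: apply the gradient inequality~\eqref{e-neg-curv-grad} at $y=\mu\tilde x$ to get $\gamma_\mathrm G(x,s)\geq \|\mu\tilde x-x\|_x^2/(1+t)$ with $t=\sigma_x(\mu\tilde x-x)=\gamma_\infty(x,s)$, and then reduce the claim to $\|\mu\tilde x-x\|_x\geq t$. But at that last step you have a sign confusion that sends you on a long unnecessary detour. You write that the Dikin bound~\eqref{e-dikin}, $\sigma_x(h)\leq\|h\|_x$, ``goes the wrong way'' and that you therefore need negative curvature to reverse the Dikin ellipsoid inclusion — yet with $h:=\mu\tilde x-x$, the inequality $\sigma_x(h)\leq\|h\|_x$ is \emph{literally} the statement $\|\mu\tilde x-x\|_x\geq\sigma_x(\mu\tilde x-x)=t$ that you need. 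There is nothing to reverse. You even compute (correctly) that $|h|_x=\sigma_x(h)=t$, and you know $|h|_x\leq\|h\|_x$ from plain self-concordance, so $t\leq\|h\|_x$ follows immediately; this is exactly the paper's argument.

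Everything after the phrase ``would ordinarily go the wrong way'' — the appeal to Proposition~\ref{prop:4.3}, the attempt to show that negative curvature makes the Dikin ellipsoid \emph{contain} a gauge ball, the Cauchy--Schwarz digression — is not needed and stems from this misreading of the direction of~\eqref{e-dikin}. Negative curvature enters this proposition \emph{only} through the gradient inequality~\eqref{e-neg-curv-grad} in your first step; the final step $\|h\|_x\geq t$ is pure self-concordance. If you delete the detour and simply cite~\eqref{e-dikin} (or equivalently $|h|_x\leq\|h\|_x$ together with $|h|_x\geq\sigma_x(h)$), your argument becomes the paper's proof.
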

\begin{proof}
From the first inequality in~(\ref{e-neg-curv-grad}) applied to 
$y=\mu\tilde x$, we find that
\begin{equation}
\gamma_\mathrm G(x,s)
= \langle F'(\mu\tilde x) - F'(x), \mu\tilde x-x\rangle 
\geq 
\frac{\|\mu\tilde x-x\|_x^2}{\sigma_x(\mu\tilde x)} 
= 
\frac{\|\mu\tilde x-x\|_x^2}{1+\gamma_\infty(x,s)}.\label{e-gamma-G-inf}
\end{equation}
The expression for $\gamma_\mathrm{G}$ follows from
\[
\langle F'(\mu\tilde x), \mu\tilde x\rangle 
= \langle F'(x), x\rangle 
= \langle F'(\mu\tilde x), x\rangle
= -\vartheta, \qquad
\langle -F'(x), \tilde x \rangle = \vartheta\tilde\mu.
\]
Self-concordance of $F$ further implies that $\|h\|_x \geq |h|_x$ 
for all $x\in\inte(K)$ and all $h$.  Therefore
\begin{equation}
 \|\mu \tilde x- x\|_x 
 \geq |\mu \tilde x- x|_x 
 \geq \sigma_x(\mu\tilde x -x) 
 = \gamma_\infty(x,s), \label{e-gamma-G-inf-pf}
\end{equation}
by the first identity in~(\ref{e-sigma-ineqs-2}).
Combining~(\ref{e-gamma-G-inf}) and~(\ref{e-gamma-G-inf-pf}) 
gives~(\ref{e-prox-bound2}).
\end{proof}

Simpler lower bounds on $\gamma_\mathrm G$ are 
\begin{equation} \label{e-prox-bound2-simpler}
 \gamma_\mathrm G(x,s) \geq \frac{3\gamma_\infty(x,s) -1}{4}, 
 \qquad
 \gamma_\infty(x,s) \leq \frac{4\gamma_\mathrm G(x,s) +1}{3}. 
\end{equation}
The first one follows from $x^2/(1+x) \geq (3x-1)/4$, the linear
approximation of the convex function $x^2/(1+x)$ at $x=1$.
\end{subsection}

\begin{subsection}{Bounds on the complexity measures for primal--dual 
scalings} \label{s-neg-curv-xi}
We now return to the complexity parameter $\xi_F$. 
To simplify the analysis and improve our understanding of $\xi_F$, 
it will be useful to relax the set~\eqref{e-T} by omitting 
the equality constraints $Tx=s$ and $T\tilde x = \tilde s$.  Define
\[
\widecheck{\cT}(x,s;\xi) := \left\{ T\in \SS_{++}^n : 
 \frac{1}{\xi\delta_F(x,s)} F''(x) \preceq T \preceq 
  \xi\delta_F(x,s) F_*''(s)^{-1} \right\} 
\]
and
\begin{equation} \label{e-hatcheck-T}
\widecheck \xi_F 
 :=  \inf{\left\{ \xi : \widecheck{\cT}(x,s;\xi) \neq \emptyset,
 \forall x\in \inte(K), s\in \inte(K^*) \right\}}.
\end{equation}
The complexity measure $\widecheck{\xi}_F$ admits an intriguing
geometric interpretation. For every pair of primal--dual interior points
$(x,s)$, consider the smallest blow-up factor for the Dikin ellipsoid at $x$
with radius $\delta_F^2(x,s)$ so that it contains
the dual of the Dikin ellipsoid (for $F_*$) with radius one at $s$
when that dual ellipsoid is centered at $x$.
Then $\widecheck{\xi}_F$ is the supremum of all of these
smallest blow-up factors over all primal--dual pairs of interior points $(x,s)$
(i.e., the smallest blow-up factor that guarantees the above ellipsoid
containment property for all primal--dual pairs $(x,s)$).

Clearly, $\cT(x,s;\xi) \subseteq \widecheck\cT(x,s;\xi)$  and 
$\xi_F \geq \widecheck \xi_F$.
The set $\widecheck\cT(x,s;\xi)$  is nonempty if and only if 
\[
F''(x) \preceq (\xi\delta_F(x,s))^2 F_*''(s)^{-1},
\]
i.e., $\xi \delta_F(x,s)$ is greater than or equal to the square root of
the largest eigenvalue of the matrix product $F''(x)F''_*(s)$.  
Therefore, 
\begin{equation} 
\widecheck{\xi}_F = 
\sup_{x\in\inte(K), s\in \inte(K^*)} \left\{
\frac{\lambda_\mathrm{max}^{1/2}(F''(x) F''_*(s))} {\delta_F(x,s)} \right\}
\label{e-check-xi-eig} 
\leq 
\sup_{x\in\inte(K), s\in \inte(K^*)} {
\frac{ \sigma_x(\tilde x) } {\delta_F(x,s)}}.
\end{equation}
The inequality follows from~(\ref{e-gevd-sigma}).

\begin{thm}
\label{thm:hatxi4/3}
Let $F$ be a $\vartheta$-LHSCB with negative curvature.
Then $\widecheck{\xi}_F \leq 4/3$.
\end{thm}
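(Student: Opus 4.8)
The plan is to bound the quantity in the right-hand side of~\eqref{e-check-xi-eig}, namely
$\sigma_x(\tilde x)/\delta_F(x,s)$, uniformly over all $x\in\inte(K)$, $s\in\inte(K^*)$.
By the definition~\eqref{e-delta-F}, $\delta_F(x,s) = (\gamma_\mathrm G(x,s)+1)/\mu$, so after
rescaling it suffices to control $\mu\,\sigma_x(\tilde x)$ in terms of $\gamma_\mathrm G(x,s)+1$.
First I would reduce everything to the two proximity measures: write
$\mu\sigma_x(\tilde x) = \sigma_x(\mu\tilde x) = 1+\gamma_\infty(x,s)$ by the first identity in~\eqref{e-sigma-ineqs-2}
(and positive homogeneity of $\sigma_x$), so the target inequality becomes
\[
\frac{1+\gamma_\infty(x,s)}{1+\gamma_\mathrm G(x,s)} \le \frac{4}{3}
\qquad\text{for all } x\in\inte(K),\, s\in\inte(K^*),
\]
equivalently $3(1+\gamma_\infty) \le 4(1+\gamma_\mathrm G)$, i.e.
$\gamma_\infty \le (4\gamma_\mathrm G + 1)/3$.

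But that is precisely the second inequality in~\eqref{e-prox-bound2-simpler}, which in turn
follows from the negative-curvature proximity bound~\eqref{e-prox-bound2},
$\gamma_\mathrm G \ge \gamma_\infty^2/(1+\gamma_\infty)$, via the elementary estimate
$t^2/(1+t) \ge (3t-1)/4$ for $t\ge 0$ (tangent line to the convex function $t^2/(1+t)$ at $t=1$).
So the chain is: negative curvature $\Rightarrow$~\eqref{e-prox-bound2} $\Rightarrow$~\eqref{e-prox-bound2-simpler}
$\Rightarrow$ the displayed ratio bound $\Rightarrow$ (with~\eqref{e-check-xi-eig} and~\eqref{e-gevd-sigma})
$\widecheck\xi_F \le 4/3$. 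Concretely I would assemble it as
\[
\widecheck\xi_F
\;\le\; \sup_{x,s}\frac{\sigma_x(\tilde x)}{\delta_F(x,s)}
\;=\; \sup_{x,s}\frac{\mu\,\sigma_x(\tilde x)}{\gamma_\mathrm G(x,s)+1}
\;=\; \sup_{x,s}\frac{1+\gamma_\infty(x,s)}{1+\gamma_\mathrm G(x,s)}
\;\le\; \frac{4}{3},
\]
the last step being exactly~\eqref{e-prox-bound2-simpler}.

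The only genuinely nontrivial ingredient is~\eqref{e-prox-bound2}, which has already been
established in the excerpt for negative-curvature barriers (it rests on the lower bound in~\eqref{e-neg-curv-grad}
applied to $y=\mu\tilde x$, together with self-concordance giving $\|h\|_x\ge|h|_x$); everything else is
bookkeeping with the identities already recorded. So I do not expect a real obstacle here — the main
point is simply to recognize that the relaxed complexity measure $\widecheck\xi_F$ is, up to the
eigenvalue-versus-gauge inequality~\eqref{e-gevd-sigma}, nothing but the worst-case ratio
$(1+\gamma_\infty)/(1+\gamma_\mathrm G)$, and that negative curvature is exactly what forces this
ratio below $4/3$. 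One should double-check that the step using~\eqref{e-gevd-sigma} is lossless
enough not to matter for this bound (it is, since we only need an upper bound on $\widecheck\xi_F$),
and that the suprema are attained or approached in the interior so no boundary subtlety arises.
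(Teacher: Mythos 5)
Your proposal is correct and follows essentially the same route as the paper's proof: passing through~\eqref{e-check-xi-eig} and~\eqref{e-gevd-sigma}, rewriting $\sigma_x(\tilde x)/\delta_F(x,s)$ as $(1+\gamma_\infty)/(1+\gamma_\mathrm G)$, and closing with the negative-curvature proximity bound~\eqref{e-prox-bound2-simpler}. The only cosmetic difference is that you cite~\eqref{e-sigma-ineqs-2} for the identity $\mu\sigma_x(\tilde x)=1+\gamma_\infty$, whereas the paper invokes the definition~\eqref{e-gamma-g} directly.
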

\begin{proof}
From~(\ref{e-check-xi-eig}), 
\[
\widecheck \xi_F 
\leq \sup_{x\in\inte(K), s\in\inte(K^*)}
   \frac{\sigma_x(\tilde x)}{\delta_F(x,s)} 
= \sup_{x\in\inte(K), s\in\inte(K^*)}
   \frac{\gamma_\infty(x,s) + 1}{\gamma_\mathrm G(x,s) + 1} 
\leq 4/3.
\]
The equality follows from the definitions of $\gamma_\infty(x,s)$
in~\eqref{e-gamma-g} and of $\delta_F(x,s)$ in~\eqref{e-delta-F}.
For the second inequality  we use the bound
$\gamma_\infty(x,s) +1 \leq 4(\gamma_\mathrm G(x,s) + 1)/3$
from~\eqref{e-prox-bound2-simpler}.
\end{proof}
\end{subsection}

\begin{thm}
\label{thm:4/3}
Let $F$ be a $\vartheta$-LHSCB with negative curvature.
Then $\xi_F \leq 4/3$.
Moreover, the integral scaling~\eqref{e-T-integral} is in $\cT(x,s;4/3)$.
\end{thm}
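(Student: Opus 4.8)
The plan is to bound the largest and smallest generalized eigenvalues of a scaling $T$ in $\cT(x,s;\xi)$ separately, and then show that the integral scaling $T$ in \eqref{e-T-integral} simultaneously satisfies both matrix inequalities in \eqref{e-T} with $\xi=4/3$. Recall from \cite[Theorem 3.1]{MT2014} that this $T$ already satisfies the two linear conditions $Tx=s$ and $T\tilde x=\tilde s$, so the only thing left to verify is the sandwich
\[
\frac{1}{\tfrac43\,\delta_F(x,s)}\,F''(x)\preceq T\preceq \tfrac43\,\delta_F(x,s)\,F_*''(s)^{-1}.
\]
First I would apply Proposition~\ref{prop:4.4} with $y=\mu\tilde x$, which gives
\[
\frac{1}{1+\sigma_x(\mu\tilde x-x)}\,F''(x)\preceq \frac{T}{\mu}\preceq \frac{1}{1-\sigma_x(x-\mu\tilde x)}\,F''(x),
\]
since $T/\mu=\int_0^1 F''((1-\alpha)x+\alpha(\mu\tilde x))\,d\alpha=G$. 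Using \eqref{e-sigma-ineqs-2}, the left factor is $1/(1+\gamma_\infty(x,s))$ and, by \eqref{e-mink-translate}, the right factor is $1+\sigma_{\mu\tilde x}(\mu\tilde x-x)$.

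For the lower bound in \eqref{e-T}, I combine the left inequality above with $\mu\delta_F(x,s)=\gamma_\mathrm G(x,s)+1$ (from \eqref{e-delta-F}); it then suffices that
\[
\frac{\mu}{1+\gamma_\infty(x,s)}\;\ge\;\frac{1}{\tfrac43\,\delta_F(x,s)}\quad\Longleftrightarrow\quad \gamma_\infty(x,s)+1\le \tfrac43\,(\gamma_\mathrm G(x,s)+1),
\]
which is exactly the second inequality in \eqref{e-prox-bound2-simpler}. For the upper bound in \eqref{e-T}, I would use Proposition~\ref{prop:4.4} applied at the dual point: by symmetry of the construction, $T$ equals $\tilde\mu^{-1}$ times $\int_0^1 F_*''((1-\alpha)s+\alpha(\tilde\mu s))\,d\alpha$ — here one needs the identity-level fact that the integral scaling is self-dual in the sense that its definition in terms of $(x,s)$ coincides with the dual-side definition in terms of $(s,x)$, which follows from \eqref{e-dual-grad}, \eqref{e-dual-hess} and the change of variables $x+\alpha(\mu\tilde x-x)\mapsto -F'$ of that point. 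Applying \eqref{e-hess-ineq-1} on the dual side gives $T\preceq \bigl(1+\sigma_s(\tilde\mu s-s)\bigr)\tilde\mu\,F_*''(s)^{-1}$ divided by the appropriate factor, and $\sigma_s(\tilde\mu s-s)=\gamma_\infty(s,x)=\gamma_\infty(x,s)$ because $\gamma_\infty$ is symmetric in its arguments (both equal $\sigma_{\mu\tilde x}(\mu\tilde x-x)/(1-\sigma_{\mu\tilde x}(\mu\tilde x-x))$ after using \eqref{e-mink-translate}). The same inequality $\gamma_\infty+1\le\tfrac43(\gamma_\mathrm G+1)$ from \eqref{e-prox-bound2-simpler} then closes the upper bound, and $\xi_F\le 4/3$ follows since $\cT(x,s;4/3)\ne\emptyset$ for every $(x,s)$.

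The main obstacle I anticipate is the duality symmetry of the integral scaling — making precise that $\mu\int_0^1 F''((1-\alpha)x+\alpha\mu\tilde x)\,d\alpha$ is literally the same matrix as $\tilde\mu^{-1}\int_0^1 F_*''((1-\alpha)s+\alpha\tilde\mu s)\,d\alpha$, so that Proposition~\ref{prop:4.4} can be invoked on the dual side to get the $F_*''(s)^{-1}$ upper bound rather than a cruder $F''(\mu\tilde x)$-type bound. This requires the substitution $\gamma\mapsto -F'\!\bigl((1-\alpha)x+\alpha\mu\tilde x\bigr)$ together with $-F'(x)=\tilde s$, $-F'(\mu\tilde x)=\mu^{-1}\cdot(-F'(\tilde x))$, and \eqref{e-dual-hess}; one must check that this curve in $\inte(K^*)$ is, up to the scalar $\tilde\mu$, the segment from $s$ to $\tilde\mu s$, which is where logarithmic homogeneity and $\mu\tilde\mu=1$ on the central path fail and the general identity \eqref{e-mink-translate} must be used instead. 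Everything else is substitution of the proximity identities \eqref{e-sigma-ineqs-2}, \eqref{e-gamma-g}, \eqref{e-delta-F} and the elementary scalar bound in \eqref{e-prox-bound2-simpler}. An alternative route that sidesteps the duality bookkeeping is to bound $T\preceq \bigl(1+\gamma_\infty\bigr)\mu\,F''(\mu\tilde x)$ via \eqref{e-hess-ineq-2} and then relate $F''(\mu\tilde x)$ to $F_*''(s)^{-1}=F''(\tilde x)$ through \eqref{e-log-hom-props} and \eqref{e-geneig-sigma}, but this seems to lose a factor and would not reach the sharp $4/3$, so I would only fall back on it if the symmetry argument proves delicate.
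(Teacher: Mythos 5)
Your lower bound is correct and matches the paper, but your plan for the upper bound breaks down and, worse, you discarded the approach that actually works based on a misconception.

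\textbf{The dual-side symmetry you rely on is false.} You claim that
$T=\mu\int_0^1 F''((1-\alpha)x+\alpha\mu\tilde x)\,d\alpha$
is \emph{literally} a scalar multiple of
$\int_0^1 F_*''\bigl((1-\alpha)s+\alpha\,\cdot\bigr)\,d\alpha$, via the change of variables $c(\alpha)=(1-\alpha)x+\alpha\mu\tilde x \mapsto -F'(c(\alpha))$ together with \eqref{e-dual-grad}--\eqref{e-dual-hess}. This fails on two counts. First, $-F'$ is not an affine map, so the image of the segment $[x,\mu\tilde x]$ under $-F'$ is not a segment in $\inte(K^*)$; in particular it is not the segment from $s$ to $\mu\tilde s$ (or any rescaling thereof), so the parametrized curve cannot be matched with the dual-side integrand. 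Second, even if it were a segment, \eqref{e-dual-hess} gives pointwise $F''(c(\alpha))=F_*''(-F'(c(\alpha)))^{-1}$, and $\int_0^1 M(\alpha)^{-1}\,d\alpha\ne\bigl(\int_0^1 M(\alpha)\,d\alpha\bigr)^{-1}$ in general, so the inversion does not commute with the integral. (Also note the expression you wrote, $\int_0^1 F_*''((1-\alpha)s+\alpha\tilde\mu s)\,d\alpha$, integrates over the ray through $s$ and collapses to a multiple of $F_*''(s)$; even as a typo-corrected dual-side integral scaling, the identity with $T$ does not hold.) The integral scaling is known to satisfy $Tx=s$ and $T\tilde x=\tilde s$, but it is not self-dual in the sense your argument needs.

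\textbf{The ``fallback'' you dismissed is the correct proof and loses nothing.} Proposition~\ref{prop:4.4}, specifically the right-hand inequality of \eqref{e-hess-ineq-2} with $y=\mu\tilde x$, gives directly
$T=\mu G\preceq \mu\bigl(1+\sigma_x(\mu\tilde x-x)\bigr)F''(\mu\tilde x)$.
Now $F''(\mu\tilde x)=\mu^{-2}F''(\tilde x)$ by \eqref{e-log-hom-props}, and $F''(\tilde x)=F''(-F_*'(s))=F_*''(s)^{-1}$ by \eqref{e-dual-hess}. These are \emph{equalities}, not inequalities, so
\[
T\preceq\frac{1+\sigma_x(\mu\tilde x-x)}{\mu}F_*''(s)^{-1}
=\frac{\gamma_\infty(x,s)+1}{\mu}F_*''(s)^{-1}
\leq\frac{4}{3}\cdot\frac{\gamma_\mathrm G(x,s)+1}{\mu}F_*''(s)^{-1}
=\frac{4}{3}\delta_F(x,s)F_*''(s)^{-1},
\]
with the last inequality being exactly \eqref{e-prox-bound2-simpler}. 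There is no extra $\sigma$-factor and no need for \eqref{e-geneig-sigma}: your belief that converting $F''(\mu\tilde x)$ to $F_*''(s)^{-1}$ costs a factor conflated the eigenvalue bound $F''(y)\preceq\sigma_y(x)^2F''(x)$ with the exact homogeneity scaling. This is precisely the paper's argument: both sides of the sandwich come from a single application of Proposition~\ref{prop:4.4} with $y=\mu\tilde x$, and no dual-side integral representation is needed.
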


\begin{proof}
Let  $T$ be the integral scaling~\eqref{e-T-integral} for a primal--dual
pair $x\in\inte(K)$, $s\in\inte(K^*)$.
We verify that $T\in \cT(x,s;4/3)$.
By Theorem 3.1 in~\cite{MT2014}, $T$ satisfies the two systems of linear 
equations $Tx =s$ and $T\tilde x = \tilde s$.
From the first inequality in~(\ref{e-hess-ineq-1})
and the second inequality in~(\ref{e-hess-ineq-2}), applied with
$y=\mu\tilde x$,  
\begin{equation} \label{e-integral-scaling-sandwich}
\frac{\mu}{1+\sigma_x(\mu\tilde x -x)} F''(x)
\preceq T \preceq 
\frac{1+\sigma_x(\mu\tilde x -x)}{\mu} F''(\tilde x).
\end{equation}
Here,
\[
\frac{1+\sigma_x(\mu\tilde x -x)}{\mu} 
= \frac{\gamma_\infty(x,s) + 1}{\mu} 
\leq \frac{4(\gamma_\mathrm G(x,s) + 1)}{3\mu} 
= \frac{4}{3} \delta_F(x,s).
\]
The first equality follows from~\eqref{e-sigma-ineqs-2}.
The inequality is from~\eqref{e-prox-bound2-simpler}.
\end{proof}

\end{section}

\begin{section}{Self-scaled barriers for symmetric cones}
\label{sec:symmetric}

In this section we consider self-scaled barriers for symmetric cones.
We derive the exact value of $\xi_F$ as a function of the 
barrier parameter $\vartheta$ and show that both 
the Nesterov--Todd scaling and the integral scaling are 
in $\cT(x,s;\xi_F)$.
We will see that $\xi_F \rightarrow 4/3$ as 
$\vartheta \rightarrow \infty$, so the $4/3$ upper bound 
from Theorem~\ref{thm:intro1} is asymptotically tight.
The proofs of these results give insight into the worst-case pairs $(x,s)$,
i.e., pairs with the property that $\cT(x,s;\xi)$ is empty for 
$\xi < \xi_F$.  

\begin{subsection}{Self-scaled barriers}
The main result of this section (Theorem~\ref{thm:3.5}) will use the 
following two lemmas, where the first
one gathers results on self-scaled barriers from 
Nesterov and Todd~\cite{NT1997,NT1998}.

\begin{lem}
\label{lem:3.4}
Let $K$ be a symmetric cone and let $F$ be a $\vartheta$-self-scaled 
barrier for $K$.  Then, we have the following properties.
\begin{enumerate}
\item
Let $x,w \in \inte(K)$ and $s:=F''(w)x$.
Then,
\begin{equation} \label{e-hessian-scaling}
F''(w) K = K^*, \,\,\,\,
F'(x) = F''(w) F_*'(s) \quad \mbox { and } 
\quad F''(x) = F''(w) F_*''(s) F''(w).
\end{equation}
\item For every pair $x\in\inte(K)$, $s\in\inte(K^*)$ there exists a 
 unique $w\in\inte(K)$ that satisfies $F''(w)x = s$.   
 The scaling point $w$ also satisfies $F''(w)\tilde x = \tilde s$.
\item $F$ has negative curvature.
\item
Let $w \in \inte(K)$ and $y,v \in \bd(K)$ satisfy $\iprod{F''(w)y}{v}= 0.$ Then,
\begin{equation} \label{e-complementary-dirs}
\langle F''(w+\alpha v+\beta y)v,v\rangle 
= \langle F''(w+\alpha v)v,v\rangle  \quad
\textup{ for all } \alpha, \beta \geq 0.
\end{equation}\end{enumerate}
\end{lem}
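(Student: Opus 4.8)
The plan is to dispatch the four items largely by differentiating the functional equation that defines a self-scaled barrier and by quoting two results of Nesterov and Todd, leaving only part~(4) to require real work.

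For~(1) I would use the identity $F_*(F''(w)x) = F(x) - 2F(w) - \vartheta$, valid for all $w,x\in\inte(K)$, which is a form of the definition of a $\vartheta$-self-scaled barrier. Differentiating in $x$ with $w$ fixed gives $F''(w)F_*'(s) = F'(x)$ for $s := F''(w)x$, and differentiating once more gives $F''(w)F_*''(s)F''(w) = F''(x)$; together these are~\eqref{e-hessian-scaling}. The first identity, read with $\tilde s = -F'(x)$ and $\tilde x = -F_*'(s)$, becomes $F''(w)\tilde x = \tilde s$, which is the last claim of~(2). The existence and uniqueness of the scaling point $w$ satisfying $F''(w)x = s$ (the rest of~(2)) is the Nesterov--Todd scaling-point theorem, which I would cite from~\cite{NT1997} rather than reprove; uniqueness is in any case visible afterwards from the closed-form expression for $w$ in terms of the associated Euclidean Jordan algebra. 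Part~(3), negative curvature of self-scaled barriers, I would take from~\cite[Lemma 3.1]{NT1997} (it can also be derived from the functional equation together with self-concordance, but that is not needed here).

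The substantive step, and the one I expect to be the main obstacle, is~(4). Put $g(\beta) := \iprod{F''(w+\alpha v+\beta y)v}{v}$, so that $g'(\beta) = \iprod{F'''(w+\alpha v+\beta y;\,y)\,v}{v}$; it suffices to show this vanishes for all $\alpha,\beta\geq 0$. I would first normalize the configuration: since a self-scaled barrier is invariant under $\aut(K)$ up to an additive constant, and $\aut(K)$ acts transitively on $\inte(K)$, there is $A\in\aut(K)$ with $F''(w) = A^{-\top}A^{-1}$ and a base point $e$ with $F''(e)=I$ and $Ae = w$. Writing $v = Av'$, $y = Ay'$ turns the hypothesis $\iprod{F''(w)y}{v}=0$ into $\iprod{y'}{v'}=0$ with $v',y'\in\bd(K)$, and reduces the claim to the case $w=e$. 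At the base point, two boundary elements $v',y'$ with $\iprod{y'}{v'}=0$ operator-commute and are supported on orthogonal Peirce subspaces, so along $e+\alpha v'+\beta y'$ the barrier splits, up to an additive constant, as a function of the $v'$-part plus a function of the $y'$-part; hence $\iprod{F'''(e+\alpha v'+\beta y';\,y')\,v'}{v'}\equiv 0$, and transporting back by $A$ gives~\eqref{e-complementary-dirs}. Making the Peirce-decomposition step precise --- or, equivalently, identifying it with the appropriate rigidity lemma in~\cite{NT1997,NT1998} --- is where the actual effort lies; everything else is routine differentiation or a direct appeal to the cited literature.
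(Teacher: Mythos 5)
The paper's proof of this lemma is a pure citation to Nesterov and Todd~\cite{NT1997}: equations (3.1)--(3.2) for item (1), Theorem 3.2 for item (2), Lemma 3.1 for item (3), and Theorem 5.1 for item (4). You cite the same sources for items (2) and (3). For item (1) you instead differentiate the defining identity $F_*(F''(w)x) = F(x) - 2F(w) - \vartheta$ twice in $x$, which is precisely how Nesterov and Todd derive those equations, so this is faithful rather than genuinely different, and you correctly observe that the supplementary identity $F''(w)\tilde x = \tilde s$ in (2) drops out of (1). The genuine divergence is item (4): rather than citing Theorem 5.1, you sketch a Jordan-algebraic proof --- transport $w$ to a base point via $\aut(K)$, use that two nonnegative elements with zero inner product operator-commute and are supported on orthogonal Peirce blocks, and conclude that the barrier splits additively so the mixed third derivative vanishes. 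The idea is sound and arguably more transparent structurally, but it imports the classification of symmetric cones via Euclidean Jordan algebras (considerably heavier machinery than Nesterov and Todd's direct functional-equation argument) and, as you concede, leaves the splitting step at the level of a sketch. Given that the paper's intent is simply to collect known facts by citation, the cleanest move is to cite Theorem 5.1 of~\cite{NT1997} for item (4) as the paper does, unless the Jordan-algebraic picture is to be reused elsewhere.
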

The matrix $F''(w)$ at the scaling point $w$ in the second property 
is commonly referred to as the \emph{Nesterov--Todd scaling} at $x,s$.

\begin{proof}
The gradient and Hessian scaling property was given in 
\cite[equations (3.1) and (3.2)]{NT1997}. 
The existence and uniqueness of the scaling point is proved in
\cite[Theorem 3.2]{NT1997}.
The negative curvature property is \cite[Lemma 3.1]{NT1997}.
The last property is given in \cite[Theorem 5.1]{NT1997}. \end{proof}

The next lemma strengthens the result of 
\cite[Corollary 4.1 (ii), (iii)]{NT1997}.

\begin{lem}
\label{lem:3.5}
Let $K$ be a symmetric cone and let $F$ be a $\vartheta$-self-scaled 
barrier for $K$.  
Suppose $x,w \in \inte(K)$ and $s \in \inte(K^*)$ satisfy $F''(w)x=s$.
Then we have the following properties, where $q$ is any nonzero 
vector in $\bd(K)$ that satisfies
\[
\iprod{F''(w)q}{\sigma_x(w)x-w}  = 0.
\]
(By definition of $\sigma_x(w)$ and $F''(w)K=K^*$ such a vector $q$ exists.)
\begin{enumerate}
\item  \emph{Relation between $\sigma_x(\tilde x)$ and $\sigma_x(w)$:}
\[
\sigma_x(\tilde x) = \sigma_x(w)^2, \qquad 
\iprod{F''(w)q}{\sigma_x(\tilde x)x-\tilde x}  = 0.
\]
\item \emph{Maximum generalized eigenvalue of Hessian pairs:}
\begin{equation} \label{e-eigenvalue-sigma}
\lambda_{\max}(F''(w)^{-1}F''(x))
= \lambda_{\max}(F_*''(s) F''(w))
= \lambda^{1/2}_{\max}(F_*''(s)F''(x))
= \sigma_x(\tilde x),
\end{equation}
and $q$ is a corresponding eigenvector,
\begin{equation} \label{e-eigenvector-sigma}
F''(x)q = \sigma_x(\tilde x) F''(w)q, \quad
F''(w)q = \sigma_x(\tilde x)F''(\tilde x)q, \quad
F''(x)q = \sigma_x(\tilde x)^2 F''(\tilde x)q.
\end{equation}
\end{enumerate}
\end{lem}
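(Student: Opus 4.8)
The plan is to route everything through the Nesterov--Todd scaling point $w$, the Hessian-scaling identities \eqref{e-hessian-scaling}, the complementary-directions property \eqref{e-complementary-dirs}, and the negative-curvature bound \eqref{e-gevd-sigma}. First I record the linear-algebraic consequences of \eqref{e-hessian-scaling}: since $F''(x)=F''(w)F_*''(s)F''(w)$, we get $F''(w)^{-1}F''(x)=F_*''(s)F''(w)$ and $F_*''(s)F''(x)=\big(F_*''(s)F''(w)\big)^2$; moreover $F_*''(s)F''(w)$ is similar to the positive definite matrix $F''(w)^{1/2}F_*''(s)F''(w)^{1/2}$, so all the matrices involved have real positive spectra and $\lambda_{\max}\big(F''(w)^{-1}F''(x)\big)=\lambda_{\max}\big(F_*''(s)F''(w)\big)=\lambda^{1/2}_{\max}\big(F_*''(s)F''(x)\big)$. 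It then remains to show this common value equals $\sigma_x(\tilde x)$ and that $q$ attains it.

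Next I apply \eqref{e-complementary-dirs}. Put $\lambda:=\sigma_x(w)$ and $b:=\lambda x-w$; by the definition of $\sigma_x(w)$ we have $b\in\bd(K)$, while $q\in\bd(K)$ and $\iprod{F''(w)b}{q}=0$ by the choice of $q$. Applying \eqref{e-complementary-dirs} to the complementary boundary directions $q$ and $b$ (with the quadratic form taken in $q$, $\alpha=0$, $\beta=1$) gives $\iprod{F''(w+b)q}{q}=\iprod{F''(w)q}{q}$; since $w+b=\lambda x$ and $F''(\lambda x)=\lambda^{-2}F''(x)$ by \eqref{e-log-hom-props}, this reads $\iprod{F''(x)q}{q}=\lambda^2\iprod{F''(w)q}{q}$. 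On the other hand \eqref{e-gevd-sigma} gives $F''(x)\preceq\lambda^2 F''(w)$; since the positive semidefinite matrix $\lambda^2 F''(w)-F''(x)$ has zero quadratic form at $q$ it annihilates $q$, i.e.\ $F''(x)q=\lambda^2 F''(w)q$. Thus $q$ is an eigenvector of $F''(w)^{-1}F''(x)$ for the eigenvalue $\lambda^2$, and together with the upper bound in \eqref{e-gevd-sigma} this forces $\lambda_{\max}\big(F''(w)^{-1}F''(x)\big)=\lambda^2=\sigma_x(w)^2$. Using $F''(\tilde x)=F''(w)F''(x)^{-1}F''(w)$ (invert the conjugation in \eqref{e-hessian-scaling}, with $F_*''(s)=F''(\tilde x)^{-1}$ from \eqref{e-dual-hess}), the identity $F''(x)q=\sigma_x(w)^2F''(w)q$ propagates to $F''(w)q=\sigma_x(w)^2F''(\tilde x)q$ and $F''(x)q=\sigma_x(w)^4F''(\tilde x)q$.

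I then pin down $\sigma_x(\tilde x)=\sigma_x(w)^2$. For "$\geq$", the eigenvector relations just obtained give $\lambda_{\max}\big(F''(\tilde x)^{-1}F''(x)\big)\geq\sigma_x(w)^4$, whereas \eqref{e-gevd-sigma} applied to the pair $\tilde x,x$ gives $\lambda_{\max}\big(F''(\tilde x)^{-1}F''(x)\big)\leq\sigma_x(\tilde x)^2$, so $\sigma_x(w)^2\leq\sigma_x(\tilde x)$. For "$\leq$", I use that $-F'$ is order-reversing from $(K,\preceq_K)$ to $(K^*,\preceq_{K^*})$, which for a self-scaled barrier follows from $F''(z)K=K^*$ for all $z\in\inte(K)$ together with the integral representation of $F'$. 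From $\sigma_x(w)x\succeq_K w$ and order-reversal, $\tfrac1{\sigma_x(w)}\big(-F'(x)\big)=-F'(\sigma_x(w)x)\preceq_{K^*}-F'(w)$; applying $F''(w)$ to $\sigma_x(w)x-w\in K$ and using $F''(w)w=-F'(w)$, $F''(w)x=s$ gives $-F'(w)\preceq_{K^*}\sigma_x(w)\,s$; chaining these, $\tilde s=-F'(x)\preceq_{K^*}\sigma_x(w)^2\,s$, and applying $F''(w)^{-1}$ (which maps $K^*$ onto $K$) yields $\sigma_x(w)^2x-\tilde x\in K$, i.e.\ $\sigma_x(\tilde x)\leq\sigma_x(w)^2$. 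The orthogonality $\iprod{F''(w)q}{\sigma_x(\tilde x)x-\tilde x}=0$ then comes from pairing $F''(x)q=\sigma_x(\tilde x)F''(w)q$ with $x$: using $F''(x)x=-F'(x)=\tilde s=F''(w)\tilde x$ and $F''(w)x=s$ we obtain $\iprod{q}{\tilde s}=\sigma_x(\tilde x)\iprod{q}{s}$, hence $\iprod{F''(w)q}{\sigma_x(\tilde x)x-\tilde x}=\iprod{q}{\sigma_x(\tilde x)s-\tilde s}=0$.

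Finally, substituting $\sigma_x(\tilde x)=\sigma_x(w)^2$ into the relations of the second step yields exactly \eqref{e-eigenvector-sigma}, and it identifies the common eigenvalue from the first step with $\sigma_x(\tilde x)$, giving \eqref{e-eigenvalue-sigma}. I expect the main obstacle to be the identification $\sigma_x(\tilde x)=\sigma_x(w)^2$ — specifically the bound $\sigma_x(\tilde x)\leq\sigma_x(w)^2$, since the spectral arguments deliver only the reverse inequality for free and the upper bound needs the "distance to the boundary at $w$" estimate to be transported to $\tilde x$ via monotonicity of $-F'$; a secondary nuisance is checking that each matrix product that appears has a real positive spectrum, which is dispatched uniformly by the conjugation $M\mapsto F''(w)^{1/2}MF''(w)^{-1/2}$.
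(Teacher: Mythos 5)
Your proof of the spectral part (2) runs essentially parallel to the paper's: both use the Hessian--scaling identities \eqref{e-hessian-scaling} to reduce the three eigenvalue equalities to a single one, both use the negative--curvature bound \eqref{e-gevd-sigma} for the inequality $\lambda_{\max}(F''(w)^{-1}F''(x))\le\sigma_x(w)^2$, both construct $q$ from a supporting functional of $\sigma_x(w)x-w$, and both invoke the complementary--directions identity \eqref{e-complementary-dirs} with $\alpha=0,\beta=1$ to get the Rayleigh quotient $\iprod{F''(x)q}{q}/\iprod{F''(w)q}{q}=\sigma_x(w)^2$. You spell out the step ``PSD matrix with zero quadratic form at $q$ annihilates $q$'' which the paper leaves implicit; that is a small but genuine clarification.

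The real difference is part (1). The paper simply cites \cite[Lemma~3.4]{NT1998} for $\sigma_x(\tilde x)=\sigma_x(w)^2$ and the orthogonality, whereas you give a self-contained argument. Your ``$\ge$'' direction is clean: push the eigenvector relation through $F''(\tilde x)=F''(w)F''(x)^{-1}F''(w)$, and bound the resulting generalized eigenvalue by \eqref{e-gevd-sigma} for the pair $(\tilde x,x)$. Your ``$\le$'' direction needs the additional fact (beyond negative curvature) that $-F'$ reverses the $(K,K^*)$ order; as you note, this follows for self-scaled barriers from $F''(z)K=K^*$ and the fundamental theorem of calculus for gradients, but it does \emph{not} follow from negative curvature alone, so you are correct to flag it as a separate ingredient. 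The chain $\tfrac1{\sigma_x(w)}\tilde s\preceq_{K^*}-F'(w)\preceq_{K^*}\sigma_x(w)s$ followed by applying $F''(w)^{-1}$ is a nice way to transport the Minkowski-gauge bound from $w$ to $\tilde x$. Finally, your derivation of $\iprod{F''(w)q}{\sigma_x(\tilde x)x-\tilde x}=0$ by pairing the eigenvector identity with $x$ and using $F''(x)x=\tilde s$, $F''(w)x=s$, $F''(w)\tilde x=\tilde s$ is correct and more explicit than what \cite{NT1998} records. In short: same spectral core, but you fill in the cited part (1) with a correct and readable proof, at the price of invoking the order-reversal of $-F'$.
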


\begin{proof}
The first statement is a part of \cite[Lemma 3.4]{NT1998} and its proof.
The Hessian scaling property of self-scaled barriers (Lemma~\ref{lem:3.4}) 
implies that 
\begin{equation} \label{e-scaling-pf}
F''(w)^{-1}F''(x) = F_*''(s)F''(w), \quad
F_*''(s)F''(x) = \left(F''(w)^{-1}F''(x)\right)^2 =
 \left(F_*''(s)F''(w)\right)^2.
\end{equation}
Since products of positive definite matrices have real, positive 
eigenvalues, the first two equalities in~(\ref{e-eigenvalue-sigma})
follow.
Next, we prove that $\lambda_{\max}(F''(w)^{-1}F''(x))=\sigma_x(w)^2$. 
Since self-scaled barriers have negative curvature, it follows 
from~\eqref{e-gevd-sigma} that
\begin{equation} \label{e-lambda_max-sigma}
\lambda_{\max}(F''(w)^{-1}F''(x)) \leq \sigma_x(w)^2.
\end{equation}
By definition of $\sigma_x(w)$, we have $y:= \sigma_x(w)x-w\in\bd(K)$,
so there exists a nonzero $z \in \bd(K^*)$ such that 
$\iprod{z}{y}=0$. Let $q:=F''(w)^{-1}z$.
Then $y,q \in \bd(K)$ and $\iprod{F''(w)q}{y}=0$, and the fourth property 
in Lemma~\ref{lem:3.4} (with $\alpha = 0$, $\beta=1$) implies that
\[
\frac{1}{\sigma_x(w)^2} \langle F''(x)q,q\rangle = 
\langle F''(\sigma_x(w)x)q, q\rangle = 
 \langle F''(w+y)q, q\rangle = \langle F''(w)q,q\rangle.
\]
Hence
\begin{equation} \label{e-lambdamax-rr}
\frac{\iprod{F''(x)q}{q}}{\iprod{F''(w)q}{q}} = \sigma_x(w)^2,
\end{equation}
which shows that equality holds in~(\ref{e-lambda_max-sigma}) and
that $q$ is a generalized eigenvector of the matrix pair $F''(x), F''(w)$,
i.e., the first equality in~(\ref{e-eigenvector-sigma}).
The other two equalities follow from~\eqref{e-scaling-pf}.
\end{proof}

Lemma~\ref{lem:3.5} gives a simplification of the expression for 
$\widecheck{\xi}_F$ in~(\ref{e-check-xi-eig}).  This is stated in 
the next theorem.  
The second equality in~\eqref{e-thm-3.4} should be compared with the 
inequality~\eqref{e-check-xi-eig} for barriers with negative curvature.

\begin{thm}
\label{thm:3.5}
Let $K$ be a symmetric cone and $F$ be a $\vartheta$-self-scaled barrier for $K$.  Then,
\begin{equation} \label{e-thm-3.4}
\xi_F = \widecheck \xi_F 
= \sup_{x\in\inte(K), s\in \inte(K^*)}{\frac{\sigma_x(\tilde x)} {\delta_F(x,s)}}
= \sup_{x,w\in\inte(K)}{\frac{\sigma_x(w)^2} {\delta_F(x,F''(w)x)}}.
\end{equation}
Moreover, for every pair of interior points $x\in\inte(K)$, 
$s\in\inte(K^*)$, the Nesterov--Todd scaling and the integral
scaling~(\ref{e-T-integral}) at $(x,s)$ are in $\cT(x,s;\xi_F)$.
\end{thm}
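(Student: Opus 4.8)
The plan is to establish the chain of equalities in~\eqref{e-thm-3.4} first, then handle the membership of the two scalings. For the equalities: the inequality $\xi_F \geq \widecheck\xi_F$ is immediate from $\cT(x,s;\xi)\subseteq\widecheck\cT(x,s;\xi)$. The expression of $\widecheck\xi_F$ as $\sup \lambda_\mathrm{max}^{1/2}(F''(x)F''_*(s))/\delta_F(x,s)$ is already recorded in~\eqref{e-check-xi-eig}, and by the second property in~\eqref{e-eigenvalue-sigma} of Lemma~\ref{lem:3.5} that largest eigenvalue equals $\sigma_x(\tilde x)$ exactly (for self-scaled barriers, the inequality~\eqref{e-gevd-sigma} becomes an equality). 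This gives $\widecheck\xi_F = \sup_{x,s}\sigma_x(\tilde x)/\delta_F(x,s)$, which, reparametrizing $s = F''(w)x$ over $w\in\inte(K)$ (valid by the existence/uniqueness part of Lemma~\ref{lem:3.4}) and using $\sigma_x(\tilde x) = \sigma_x(w)^2$ from Lemma~\ref{lem:3.5}(1), becomes the fourth expression in~\eqref{e-thm-3.4}. It then remains only to prove $\xi_F \leq \widecheck\xi_F$, i.e., to exhibit a scaling in $\cT(x,s;\xi)$ whenever $\widecheck\cT(x,s;\xi)$ is nonempty, and the natural candidate is the Nesterov--Todd scaling $T = F''(w)$; showing $T\in\cT(x,s;\widecheck\xi_F)$ simultaneously proves the equality $\xi_F=\widecheck\xi_F$ and the first claimed membership.

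So the core task reduces to verifying $F''(w)\in\cT(x,s;\xi_F)$. The two linear conditions $F''(w)x = s$ and $F''(w)\tilde x = \tilde s$ hold by Lemma~\ref{lem:3.4}(2). The matrix inequalities to verify are
\[
\frac{1}{\xi_F\,\delta_F(x,s)}F''(x) \preceq F''(w) \preceq \xi_F\,\delta_F(x,s)\,F_*''(s)^{-1}.
\]
Using the Hessian scaling identities $F''(x) = F''(w)F_*''(s)F''(w)$ and $F_*''(s)^{-1} = F''(w)F_*''(s)F''(w)$ rearranged via~\eqref{e-scaling-pf}, both inequalities can be rewritten as statements about the generalized eigenvalues of the pair $(F''(x),F''(w))$ — equivalently of $F''(w)^{-1}F''(x)$ — which by~\eqref{e-eigenvalue-sigma} all lie in the interval $[\,\cdot\,,\sigma_x(\tilde x)\,]$. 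More precisely, the right-hand inequality is equivalent to $\lambda_\mathrm{max}(F''(w)^{-1}F''(x)) \leq (\xi_F\delta_F(x,s))^2$, which holds since $\lambda_\mathrm{max}=\sigma_x(\tilde x)$ and $\sigma_x(\tilde x)/\delta_F(x,s)\leq\widecheck\xi_F=\xi_F$; the left-hand inequality is equivalent, after conjugating by $F''(w)^{-1/2}$ and using self-duality, to $\lambda_\mathrm{min}(F''(w)^{-1}F''(x)) \geq 1/(\xi_F\delta_F(x,s))^2$, and by a symmetry/duality argument (swapping the roles of $x$ and $s$, equivalently of $K$ and $K^*$, noting $\sigma_s(\tilde s)$ plays the dual role) the smallest generalized eigenvalue is $1/\sigma_s(\tilde s)$ and $\sigma_s(\tilde s)/\delta_F(x,s)$ is again bounded by $\xi_F$ because $\delta_F$ and the construction are symmetric in $x,s$. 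For the integral scaling, one invokes Proposition~\ref{prop:4.4} exactly as in the proof of Theorem~\ref{thm:4/3} to get the sandwich~\eqref{e-integral-scaling-sandwich}, but now replacing the crude bound $(\gamma_\infty+1)/\mu \leq \tfrac43\delta_F$ by the exact relation: since for self-scaled barriers $\sigma_x(\mu\tilde x - x) = \gamma_\infty(x,s)$ and, by the first part of Theorem~\ref{thm:3.5} already established, $(\gamma_\infty(x,s)+1)/\mu \leq \xi_F\,\delta_F(x,s)$ at every pair, the sandwich directly certifies membership in $\cT(x,s;\xi_F)$.

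The main obstacle I anticipate is the left-hand matrix inequality $\tfrac{1}{\xi_F\delta_F}F''(x)\preceq F''(w)$ — establishing that $\lambda_\mathrm{min}(F''(w)^{-1}F''(x)) = 1/\sigma_s(\tilde s)$ and that $\sigma_s(\tilde s)/\delta_F(x,s)$ is controlled by the same supremum $\xi_F$. This requires care because the definition of $\xi_F$ as written involves only $\sigma_x(\tilde x)$ (the "primal" side); one must argue that the symmetric-cone setup is genuinely symmetric under $x\leftrightarrow s$, $K\leftrightarrow K^*$, so that the dual quantity is covered by the same supremum. The key enabling facts are that $\delta_F(x,s)$ is manifestly symmetric in $x,s$ (see~\eqref{e-delta-F}), that $F_*$ is again a $\vartheta$-self-scaled barrier for the symmetric cone $K^*$, and that $\sigma_s(\tilde s)$ for $F_*$ equals the smallest generalized eigenvalue of $(F''(x),F''(w))$ via the dual version of Lemma~\ref{lem:3.5} — so the supremum defining $\xi_F$ simultaneously controls both $\lambda_\mathrm{max}$ and $1/\lambda_\mathrm{min}$. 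Everything else is bookkeeping with the scaling identities of Lemma~\ref{lem:3.4} and the proximity-measure formulas from Section~2.
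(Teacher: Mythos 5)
Your overall plan matches the paper's: establish the chain of equalities from Lemma~\ref{lem:3.5} and~\eqref{e-check-xi-eig}, then prove $\xi_F=\widecheck\xi_F$ and the first membership claim simultaneously by showing $F''(w)\in\cT(x,s;\widecheck\xi_F)$, and handle the integral scaling by sharpening the sandwich~\eqref{e-integral-scaling-sandwich}. The equalities and the integral-scaling argument are correct as you sketch them.

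The verification of the two matrix inequalities for $T=F''(w)$, however, contains a genuine error, and the ``main obstacle'' you flag is an artifact of that error. First, the Hessian identity you quote is wrong: $F''(w)F_*''(s)F''(w)=F''(x)$, \emph{not} $F_*''(s)^{-1}$; the correct rearrangement of~\eqref{e-hessian-scaling} is $F_*''(s)^{-1}=F''(w)\,F''(x)^{-1}\,F''(w)$. Once you use this, the right-hand inequality $F''(w)\preceq c\,F_*''(s)^{-1}$ becomes $F''(w)\preceq c\,F''(w)F''(x)^{-1}F''(w)$, i.e.\ $F''(x)\preceq c\,F''(w)$, which is \emph{the same} as the left-hand inequality. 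Both therefore reduce to the single condition $\lambda_{\max}(F''(w)^{-1}F''(x))\leq \xi_F\,\delta_F(x,s)$ (note: linear in $\xi_F\delta_F$, the squares in your inequalities are spurious), and this holds by $\lambda_{\max}(F''(w)^{-1}F''(x))=\sigma_x(\tilde x)\leq\widecheck\xi_F\,\delta_F(x,s)$. Nothing about $\lambda_{\min}$ enters: conjugating $\tfrac{1}{c}F''(x)\preceq F''(w)$ by $F''(w)^{-1/2}$ gives a $\lambda_{\max}$ bound, not a $\lambda_{\min}$ bound. Moreover, your claimed identity $\lambda_{\min}(F''(w)^{-1}F''(x))=1/\sigma_s(\tilde s)$ is false (for the orthant, $\lambda_{\min}=1/\max_i(x_is_i)$ while $1/\sigma_s(\tilde s)=\min_i(x_is_i)$; in fact $\sigma_s(\tilde s)=\sigma_x(\tilde x)$ by primal--dual symmetry, which is why your end conclusion happens to be true). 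The whole point of the Nesterov--Todd scaling is that it is the Hessian geometric mean between $F''(x)$ and $F_*''(s)^{-1}$, so the two sides of the $\cT$ sandwich collapse to a single scalar inequality; the paper's proof is terse precisely because it relies on this collapse, and the ``separate dual-side condition'' you try to set up does not exist.
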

\begin{proof}
Let $K$ be a symmetric cone and $F$ be a $\vartheta$-self-scaled barrier 
for $K$. 
The second and third equations in~\eqref{e-thm-3.4} follow by 
substituting $\sigma_x(\tilde x)$ and $\sigma_x(w)^2$ for 
the root of the maximum eigenvalue in~(\ref{e-check-xi-eig}), 
using Lemma~\ref{lem:3.5}.

Next, we prove that $\xi_F = \widecheck \xi_F$ and that the 
Nesterov--Todd scaling at $(x,s)$ is in $\cT(x,s;\xi_F)$.
Since $\xi_F \geq \widecheck\xi_F$, both claims are proved if we 
show that the Nesterov--Todd scaling is in  $\cT(x,s;\widecheck \xi_F)$.
Let $T=F''(w)$ be the Nesterov--Todd scaling at $x,s$.
The equality constraints $T x  = s$ and $T \tilde x  = \tilde s$
in the definition of $\cT(x,s;\widecheck \xi(T))$ follow from
Lemma~\ref{lem:3.4}, part (2).  The two matrix inequalities
\begin{equation} \label{e-nt-psd-lmis}
\frac{1}{\widecheck \xi_F\delta_F(x,s)} F''(x) \preceq
  T \preceq \widecheck \xi_F \delta_F(x,s) F_*''(s)^{-1},
\end{equation}
for $T=F''(w)$ follow from Lemma~\ref{lem:3.5} and the definition of 
$\widecheck \xi_F$ in \eqref{e-check-xi-eig} 
(see also \cite[Corollary 4.1]{NT1997}). 

To see that the integral scaling~(\ref{e-T-integral}) is also 
in~$\cT(x,s;\xi_F)$, we strengthen the result in 
Theorem~\ref{thm:4/3}.  As noted in the proof of that theorem,
the integral scaling satisfies $Tx=s$ and $T\tilde x= \tilde s$,
and the inequalities~(\ref{e-integral-scaling-sandwich}).
Since $\sigma_x(\mu\tilde x -x) = \mu \sigma_x(\tilde x)$ 
(see~\eqref{e-sigma-ineqs-2}), the 
inequalities~(\ref{e-integral-scaling-sandwich}) can be written as 
\[
\frac{1}{\sigma_x(\tilde x)} F''(x) \preceq T \preceq 
\sigma_x(\tilde x) F''(\tilde x).
\]
By~\eqref{e-thm-3.4} we have
$\sigma_x(\tilde x) \leq  \widecheck \xi_F \delta_F(x,s)$.
Hence the inequalities~(\ref{e-nt-psd-lmis}) hold for the integral 
scaling~$T$.
\end{proof}
In the following sections we work out the exact value of $\xi_F$
for commonly used self-scaled barriers.  To simplify the notation,
we define
\begin{equation} \label{e-tau-rho}
\tau_n := \sqrt{\frac{n}{n-1}}, \qquad
\rho_n := \frac{(\tau_n +1)^2}{\tau_n (\tau_n +2)},
\end{equation}
for integers $n\geq 2$.
Figure~\ref{f-xi-hat-check} shows $\rho_n$ as a function of $n$.
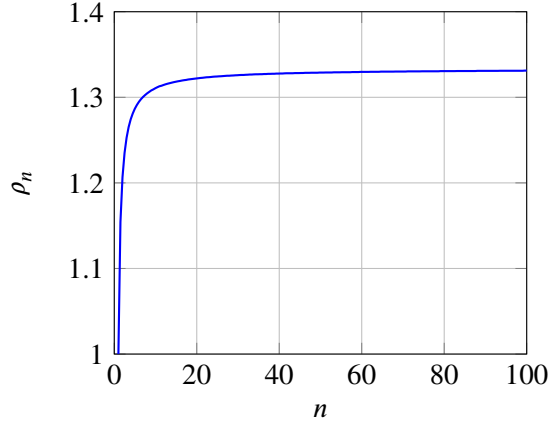
\begin{figure}
\small
\centering
\begin{tikzpicture}
\begin{axis}[
  scale = .8,
  xmin = 0, xmax = 100,
  ymin = 1, ymax = 1.4,
  xlabel = $n$,
  ylabel = $\rho_n$,
  grid,
  clip = false,
]
\addplot[no markers, thick, blue, domain = 1:100, samples = 200] 
  {1/( 1 - (x-1) * (sqrt(x) - sqrt(x-1))^2)};
\end{axis}
\end{tikzpicture}
\caption{The function $\rho_n$ defined in~\eqref{e-tau-rho}.
The limit for $n\rightarrow\infty$ is $4/3$.}
\label{f-xi-hat-check}
\end{figure}
\end{subsection}

\begin{subsection}{Nonnegative orthant}
We start with the nonnegative orthant ($K = K^* = \R^n_+$) 
and the logarithmic barrier function $F:\R^n \to \R \cup\{+\infty\}$ 
defined by
\begin{equation} \label{e-nn-orthant-F}
F(x) :=\left\{ \begin{array}{rl} -\sum_{j=1}^n \ln x_j, & 
\textup{ if } x \in \R_{++}^n,\\
+\infty, & \textup{ otherwise.}
\end{array}
\right.
\end{equation}
This is a self-scaled barrier with $\vartheta = n$.
The dual barrier is
\[
F_*(s) :=\left\{ \begin{array}{rl} -\sum_{j=1}^n \ln s_j - n, & 
\textup{ if } s \in \R_{++}^n,\\
+\infty, & \textup{ otherwise.}
\end{array}
\right.
\]

\begin{thm}
\label{thm:2}
Let $F$ be the barrier function~\eqref{e-nn-orthant-F}, with $n\geq 2$.
Then $\xi_F = \widecheck{\xi}_F = \rho_n$.
Moreover, for every pair $x,s\in\R^n_{++}$, the diagonal matrix 
$T$ with diagonal entries $T_{ii} = s_i/x_i$ is in $\cT(x,s;\xi_F)$.
\end{thm}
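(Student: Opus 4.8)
The plan is to invoke Theorem~\ref{thm:3.5} to reduce the computation of $\xi_F=\widecheck{\xi}_F$ to a one-variable optimization, and then to use the last sentence of that theorem for the diagonal scaling. Since the barrier~\eqref{e-nn-orthant-F} is an $n$-self-scaled barrier for $\R^n_+$, Theorem~\ref{thm:3.5} already gives $\xi_F=\widecheck{\xi}_F=\sup_{x\in\inte(K),\,s\in\inte(K^*)}\sigma_x(\tilde x)/\delta_F(x,s)$, so the task is to evaluate this supremum. First I would express all relevant quantities in terms of the products $t_i:=x_is_i>0$: here $\tilde s=-F'(x)=(1/x_i)_i$, $\tilde x=-F'_*(s)=(1/s_i)_i$, and $F''(x)=\diag(1/x_i^2)$, so $\mu=\frac1n\sum_i t_i$, $\tilde\mu=\frac1n\sum_i 1/t_i$, $\sigma_x(\tilde x)=\max_i(1/t_i)=1/\min_i t_i$, and, by~\eqref{e-delta-F} with $\vartheta=n$, $\delta_F(x,s)=(n\mu\tilde\mu-(n-1))/\mu$. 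Therefore $\xi_F=\sup_{t\in\R^n_{++}}\frac{\mu}{\min_i t_i\,(n\mu\tilde\mu-(n-1))}$, with $n\mu\tilde\mu-(n-1)\ge 1$ by Cauchy--Schwarz.

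The next step is a structural reduction. The objective is invariant under $t\mapsto ct$, so we may assume $\min_i t_i=1$, i.e.\ $t_i\ge 1$ for all $i$ with equality for some index; this forces $\mu\ge 1$. For a fixed value of $\mu$ (equivalently, of $\sum_i t_i$) the objective $\mu/(n\mu\tilde\mu-(n-1))$ is strictly decreasing in $\tilde\mu$, so one should minimize $\tilde\mu=\frac1n\sum_i 1/t_i$. Reordering so that $t_1=1$ and applying AM--HM (equivalently, strict convexity of $u\mapsto 1/u$) to $t_2,\dots,t_n$ gives $\sum_{i\ge 2}1/t_i\ge (n-1)/\lambda$ with $\lambda:=(\sum_{i\ge 2}t_i)/(n-1)=(n\mu-1)/(n-1)\ge 1$, so $t$ is dominated by the two-value configuration $(1,\lambda,\dots,\lambda)$; since these configurations are themselves feasible, the supremum equals $\sup_{\lambda\ge 1}g(\lambda)$, where $g(\lambda)=\frac{1+(n-1)\lambda}{(n-1)(\lambda+1/\lambda)-(n-2)}$ is obtained by direct substitution.

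It then remains to maximize $g$ on $[1,\infty)$, which is routine calculus: its denominator is positive for $\lambda\ge 1$, and clearing it in $g'(\lambda)=0$ yields $(n-1)\lambda^2-2(n-1)\lambda-1=0$, whose positive root is $\lambda^\star=1+\tau_n$ with $\tau_n=\sqrt{n/(n-1)}$. Since $g(1)=\lim_{\lambda\to\infty}g(\lambda)=1<\rho_n$, this interior critical point is the global maximizer, and substituting $\lambda^\star$ and simplifying via the identity $(n-1)\tau_n^2=n$ gives $g(\lambda^\star)=(\tau_n+1)^2/(\tau_n(\tau_n+2))=\rho_n$; hence $\xi_F=\widecheck{\xi}_F=\rho_n$. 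Finally, the diagonal matrix $T$ with $T_{ii}=s_i/x_i$ is positive definite and equals $F''(w)$ for $w_i=\sqrt{x_i/s_i}$, with $F''(w)x=s$; by Lemma~\ref{lem:3.4}(2) it is therefore the Nesterov--Todd scaling at $(x,s)$, so the last assertion of Theorem~\ref{thm:3.5} places it in $\cT(x,s;\xi_F)$.

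The step I expect to demand the most care is the structural reduction — making precise that, among all normalized $t$ with a prescribed mean, the objective is maximized by $(1,\lambda,\dots,\lambda)$ — together with the algebraic simplification of $g(\lambda^\star)$ into the stated closed form $\rho_n$; the single-variable maximization itself is straightforward.
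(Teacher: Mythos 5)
Your proposal is correct and follows essentially the same route as the paper: invoke Theorem~\ref{thm:3.5} to reduce $\xi_F$ to a supremum over the componentwise products $v_i = x_i s_i$, normalize so the minimum equals $1$, reduce to the two-value configuration $(1,\lambda,\dots,\lambda)$ (you use AM--HM where the paper uses Lagrange stationarity and checks the inequalities are inactive afterward---the same reduction), and finish with a single-variable calculus problem; the paper minimizes the reciprocal of the objective parametrized by $\alpha=\mu$ while you maximize the objective parametrized by $\lambda$, and the two parametrizations are linked by $\lambda=\tau_n^2(\alpha-1)+1$. The identification of the diagonal $T$ with the Nesterov--Todd scaling and the appeal to Theorem~\ref{thm:3.5} for the last claim is also the paper's argument, with your construction of the scaling point $w_i=\sqrt{x_i/s_i}$ making it slightly more explicit.
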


\begin{proof}
Let $x,s \in \R^n_{++}$.  Then
\[
\sigma_x(\tilde x) = 
 \frac{1}{\min_i \left\{x_i s_i\right\}}, \qquad
\delta_F(x,s) = n\tilde \mu - \frac{n-1}{\mu}
 = \sum_{i=1}^n \frac{1}{x_is_i}  - \frac{n(n-1)}{\sum_i x_i s_i}
\]
and therefore
\begin{equation}
\label{e-check-F-pf}
\frac{1}{\widecheck \xi_F} = 
\inf_{x,s\in\R^n_{++}} 
{\left(\min_{i \in\{1,\ldots,n\}} v_i
   \left( \sum_{j=1}^n \frac{1}{v_j} - \frac{n(n-1)}
 {\sum_j v_j }\right)\right)},
\end{equation}
where $v\in\R^n_{++}$ is the vector
with components $v_i = x_is_i$ for $i \in \{1,\ldots, n\}$.
(The vector $v$ is the component-wise square of the usual $v$-space vector 
from the interior-point literature.)
To solve the minimization problem in this expression, we can
assume, without loss of generality, that  $v_n = \min_i{v_i} = 1$.
Therefore, $1/\widecheck \xi_F$ is the optimal value of the
optimization problem
\begin{equation} \label{e-check-F-pf-2}
\begin{array}{ll}
\mbox{minimize}   & \displaystyle
   1 + \left(\sum\limits_{j=1}^{n-1} \frac{1}{v_j}\right) - \frac{n-1}{\alpha}  \\
\mbox{subject to} & v_i \geq 1, \quad i \in \{1, 2, \ldots, n-1\} \\
  & 1 + \sum\limits_{i=1}^{n-1} v_i = n\alpha,
\end{array}
\end{equation}
with variables $v_1,\ldots,v_{n-1}$ and $\alpha$.
We solve the problem with the inequalities omitted and will find that they
are inactive at the optimum.  For fixed $\alpha$, the problem is convex
and the optimality conditions are
\[
\frac{1}{v_i^2} = \lambda, \quad i \in \{1,\ldots,n-1\}, \qquad
 1 + \sum_{i=1}^{n-1} v_i = n\alpha,
\]
where $\lambda$ is a Lagrange multiplier for the equality constraint.
Hence, for given $\alpha$, the variables $v_1,\ldots,v_{n-1}$ are equal
at the optimum.  From the equality constraint,
$v_i = \tau_n^2 (\alpha-1) + 1$ for $i \in \{1,\ldots,n-1\}$.
Substitution in the objective function of~\eqref{e-check-F-pf-2} gives
the optimal value as a function of $\alpha$,
\[
 f(\alpha) := 1 + \frac{n-1}{\tau_n^2(\alpha-1) +1} - \frac{n-1}{\alpha}.
\]
This is minimized by $\alpha = 1 + 1/\tau_n$
and the optimal value is $f(1+1/\tau_n) = 1/\rho_n$.  
We conclude that the optimal
value and optimal solution of~\eqref{e-check-F-pf-2} are
\begin{equation}
\label{e-v-opt}
\frac{1}{\widecheck \xi_F} 
= \inf_\alpha{f(\alpha)} = \frac{1}{\rho_n}, \qquad
 v = (\tau_n+1, \ldots, \tau_n+1, 1).
\end{equation}
The last statement in the theorem follows from Theorem~\ref{thm:3.5} and
the fact that the diagonal matrix with diagonal entries $s_i/x_i$
is the Nesterov--Todd scaling for the barrier $F$.
\end{proof}
The proof established that the value of $\widecheck{\xi}_F$ is attained  
by pairs $(x,s)$ with component-wise product $v$ given by~\eqref{e-v-opt}.
For these worst-case pairs, $\mu = (\tau_n+1)/\tau_n$.
Other worst-case pairs are constructed by permuting and scaling
the solution $v$ in~\eqref{e-v-opt}.  

\begin{cor}
\label{cor:2.1}
Let $F$ be the barrier function~\eqref{e-nn-orthant-F}, with $n\geq 2$.
Then for every $s \in \R^n_{++}$, there is an 
$x \in \R^n_{++}$ which together with $s$ attains 
$\widecheck{\xi}_F$. In particular,
for each $\mu > 0$, pick any $i \in \{1,2, \ldots,n\}$ and set
\begin{equation} \label{e-Rn-zi}
x_j := \left\{ \begin{array}{rl} 
(\tau_n/(\tau_n+1))\mu \tilde x_i,& \textup{ if } j = i;\\
\tau_n \mu \tilde x_j, & \textup{ otherwise,} 
\end{array}
\right.
\end{equation}
where $\tilde x_j = 1/s_j$ for $j \in \{1, 2, \ldots,n\}$.
\end{cor}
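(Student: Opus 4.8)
The plan is to show that the point $x$ defined in~\eqref{e-Rn-zi} is, up to a positive scalar and a permutation of coordinates, precisely the worst-case primal point identified in the proof of Theorem~\ref{thm:2}, so that the pair $(x,s)$ achieves the supremum $\sup_{x,s}\sigma_x(\tilde x)/\delta_F(x,s)$, which by Theorem~\ref{thm:3.5} equals $\widecheck\xi_F=\xi_F$. First I would note that $x\in\R^n_{++}$: since $s\in\R^n_{++}$ we have $\tilde x_j=1/s_j>0$ for all $j$, and the coefficients $(\tau_n/(\tau_n+1))\mu$ and $\tau_n\mu$ in~\eqref{e-Rn-zi} are positive. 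Then, using $\tilde x_j s_j=1$, I would compute the component-wise product $v_j:=x_j s_j$, obtaining $v_i=(\tau_n/(\tau_n+1))\mu$ and $v_j=\tau_n\mu$ for $j\neq i$; thus $v$ is the vector $(\tau_n+1,\dots,\tau_n+1,1)$ of~\eqref{e-v-opt}, with the entry $1$ moved into coordinate $i$, multiplied by the positive scalar $\tau_n\mu/(\tau_n+1)$. I would also check consistency of the normalization with the role of $\mu$ as central-path parameter: summing the $v_j$ gives $\langle s,x\rangle=\tau_n\mu\,(n+(n-1)\tau_n)/(\tau_n+1)$, which simplifies to $n\mu$ after using the identity $\tau_n^2=n/(n-1)$, so that $\mu(x,s)=\mu$ exactly as claimed.

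The main step is then to observe that the objective of the variational problem~\eqref{e-check-F-pf}, namely $v\mapsto\min_j v_j\bigl(\sum_k 1/v_k-n(n-1)/\sum_k v_k\bigr)$, is invariant under permutations of the coordinates of $v$ and under multiplication of $v$ by a positive scalar. Hence its value at the $v$ computed above equals its value at the minimizer $(\tau_n+1,\dots,\tau_n+1,1)$ of~\eqref{e-v-opt}, which is $1/\widecheck\xi_F=\rho_n$ by Theorem~\ref{thm:2}. Since the proof of Theorem~\ref{thm:2} also records $\sigma_x(\tilde x)=1/\min_j v_j$ and $\delta_F(x,s)=\sum_k 1/v_k-n(n-1)/\sum_k v_k$, this is exactly the statement $\sigma_x(\tilde x)/\delta_F(x,s)=\widecheck\xi_F$. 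As $K=\R^n_+$ is a symmetric cone and $F$ is self-scaled, Theorem~\ref{thm:3.5} gives $\widecheck\xi_F=\xi_F=\sup_{x,s}\sigma_x(\tilde x)/\delta_F(x,s)$, and therefore the pair $(x,s)$ attains this supremum. Letting $i$ range over $\{1,\dots,n\}$ and $\mu$ over $(0,\infty)$ exhibits, for the given $s$, a whole family of matching primal points, which gives the ``in particular'' part as well as the opening assertion.

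I do not expect a genuine obstacle here: all the analytic work was carried out in the proof of Theorem~\ref{thm:2}, and what remains is the bookkeeping verification that~\eqref{e-Rn-zi} reproduces the optimal $v$-vector~\eqref{e-v-opt}. The two points that need a little care are (i) pinning down the normalizing constant $\tau_n/(\tau_n+1)$ so that the $\mu$ appearing in~\eqref{e-Rn-zi} is literally $\mu(x,s)$ --- this is where the identity $\tau_n^2=n/(n-1)$ enters --- and (ii) invoking the scale- and permutation-invariance of the objective to transport the optimality of~\eqref{e-v-opt} to the present $v$, rather than re-solving the optimization problem from scratch.
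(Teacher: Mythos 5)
Your proposal is correct and matches the paper's own justification: the paper follows Theorem~\ref{thm:2} with the remark that other worst-case pairs are obtained by permuting and rescaling the minimizer~\eqref{e-v-opt}, which is exactly what you establish, together with the observation (the only nontrivial bookkeeping) that with the normalizing coefficient $\tau_n/(\tau_n+1)$ the parameter $\mu$ in~\eqref{e-Rn-zi} really is $\mu(x,s)$. Your identification of the scale- and permutation-invariance of the objective in~\eqref{e-check-F-pf} as the key mechanism is precisely the right point.
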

It can be verified that for all worst-case pairs described in the corollary,
\[ 
\gamma_\mathrm{G}(x,s) = \frac{1}{\tau_n+1}, \qquad
\gamma_\mathrm{\infty}(x,s) = \frac{1}{\tau_n}, \qquad
\|x-\mu\tilde x\|_{\mu\tilde x} =  \frac{\tau_n}{\tau_n+1}.
\]
Figure~\ref{fig:nonnegorth} illustrates the corollary in dimension $n=3$.
\begin{figure}
  \includegraphics[width=0.7\textwidth, trim=3cm 3cm 3cm 3cm]{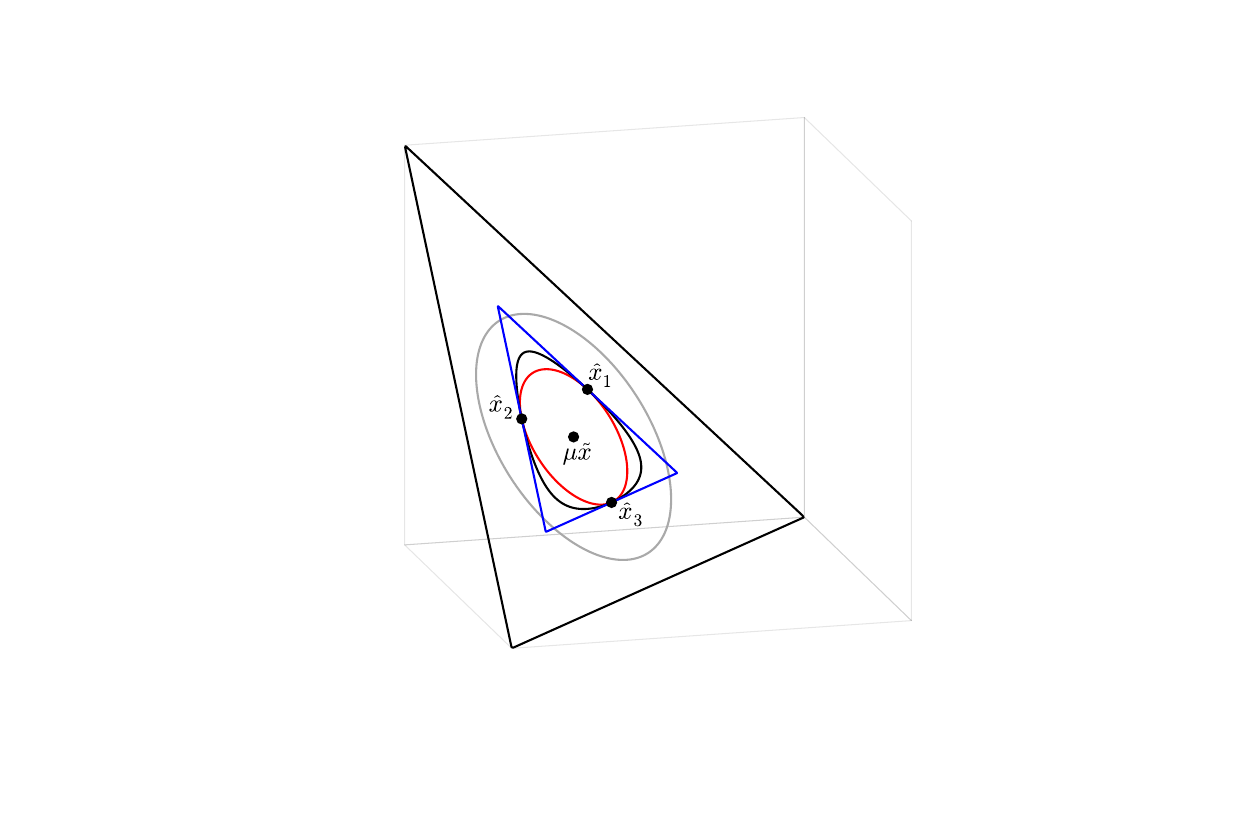}
  \caption{Nonnegative orthant of $\R^n$ for $n=3$ intersected with a  
  hyperplane\\ $\{x \in \R^n\, : \, \iprod{s}{x}= n\mu\}$ where $s$ is
  a positive vector.  The gray curve is the boundary of the unit Dikin 
  ellipsoid centered at $\mu\tilde x$. 
  The points $\hat{x}_1, \hat{x}_2, \hat{x}_3$ are the three points in the
  plane that attain $\widecheck\xi_F$, as given by Corollary~\ref{cor:2.1}.  
  They lie on the boundary of the Dikin ellipsoid with radius 
  $\tau_n/(\tau_n+1)$ (red curve).  
  The black curve shows the boundary of the neighborhood
  $\gamma_\mathrm{G}(x,s)\leq 1/(\tau_n+1)$. The blue triangle is the boundary 
  of the neighborhood $\gamma_{\infty}(x,s) \leq 1/\tau_{n}$. 
  }
  \label{fig:nonnegorth}

\end{figure}
In the figure, the nonnegative orthant in $\R^3$ is intersected with 
the plane $\{ x \in \R^n : \langle s,x\rangle = n\mu\}$, for a given positive
vector $s$ and constant $\mu >0$.  
The point $\mu\tilde x$ in the plane forms a central pair with $s$.  
The two elliptical curves show the boundaries of the Dikin ellipsoids 
$E_{\mu\tilde x, r} = \{ x \in \R^n : \|x -\mu\tilde x\|_{\mu\tilde x} \leq r \}$
with radius $r=1$ (gray curve) and $r= \tau_n/(\tau_n+1)$ (red curve).
The points $\hat x_1,\hat x_2, \hat x_3$ are the three points in the plane
that attain $\widecheck\xi_F$ together with $s$.
The black curve is the contour line of 
$\gamma_\mathrm G(x,s)  = 1/(\tau_n + 1)$ and the blue curve is the contour 
line of $\gamma_\infty(x,s) = 1/\tau_n$.
\end{subsection}

\begin{subsection}{Positive semidefinite cone}
\label{s-psd}
For the cone of symmetric positive semidefinite matrices
$K = K^* = \SS^m_{++}$, the barrier
\begin{equation} \label{e-psd-F}
F(x) :=\left\{ \begin{array}{rl} -\ln\det(x), 
    & \textup{ if } x \in \SS_{++}^m;\\
+\infty, & \textup{ otherwise,}
\end{array}
\right.
\end{equation}
and the corresponding dual barrier
\[
F_*(s) :=\left\{ \begin{array}{rl} -\ln\det(s) - m, 
    & \textup{ if } s \in \SS_{++}^m;\\
+\infty, & \textup{ otherwise,}
\end{array}
\right.
\]
are self-scaled with $\vartheta = m$.  Here, the dual cone and
dual barrier are defined with respect to the trace inner product
$\langle s, x \rangle = \Tr(sx)$.

\begin{thm}
\label{cor:3.7}
Let $F$ be the barrier function~\eqref{e-psd-F}, with $m\geq 2$.
Then $\xi_F = \widecheck \xi_F = \rho_m$.
Moreover, the Nesterov--Todd scaling and the integral scaling at $x,s$ are 
in $\cT(x,s;\xi_F)$.
\end{thm}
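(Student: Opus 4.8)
The plan is to reduce the computation entirely to the nonnegative-orthant case already treated in the proof of Theorem~\ref{thm:2}, using that the log-det barrier is self-scaled and that all quantities entering $\widecheck\xi_F$ depend only on the joint spectrum of $x$ and $s$.

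First I would invoke Theorem~\ref{thm:3.5}. Since the barrier $F$ in~\eqref{e-psd-F} is a $\vartheta$-self-scaled barrier for $K=\SS^m_{++}$ with $\vartheta=m$, that theorem gives at once
\[
\xi_F = \widecheck\xi_F
= \sup_{x\in\inte(K),\,s\in\inte(K^*)} \frac{\sigma_x(\tilde x)}{\delta_F(x,s)},
\]
and it also tells us that the Nesterov--Todd scaling and the integral scaling at every pair $(x,s)$ lie in $\cT(x,s;\xi_F)$. Thus the ``moreover'' part is immediate, and all that remains is to evaluate the supremum and show it equals $\rho_m$.

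Next I would express $\sigma_x(\tilde x)$ and $\delta_F(x,s)$ in terms of the eigenvalues $\lambda_1,\dots,\lambda_m>0$ of the matrix $sx$ (equivalently of $x^{1/2}sx^{1/2}\succ 0$). Here $F'(x)=-x^{-1}$ and $F'_*(s)=-s^{-1}$, so $\tilde s = x^{-1}$ and $\tilde x = s^{-1}$. A short congruence computation gives $\sigma_x(\tilde x)=\inf\{\beta\ge 0:\beta x-s^{-1}\succeq 0\}=\lambda_{\max}(x^{-1/2}s^{-1}x^{-1/2})=1/\min_i\lambda_i$, while $\mu=\Tr(sx)/m=\tfrac1m\sum_i\lambda_i$ and $\tilde\mu=\tfrac1m\Tr(x^{-1}s^{-1})=\tfrac1m\sum_i 1/\lambda_i$, whence $\delta_F(x,s)=\sum_i 1/\lambda_i - m(m-1)/\sum_i\lambda_i$. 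Therefore
\[
\frac{\sigma_x(\tilde x)}{\delta_F(x,s)}
= \frac{1}{\min_i\lambda_i\left(\sum_j 1/\lambda_j - m(m-1)/\sum_j\lambda_j\right)},
\]
which is exactly the reciprocal of the expression minimized in~\eqref{e-check-F-pf} in the proof of Theorem~\ref{thm:2}, with $n=m$ and $v_i=\lambda_i$.

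Finally, since the spectrum $(\lambda_1,\dots,\lambda_m)$ ranges over all of $\R^m_{++}$ as $(x,s)$ ranges over $\inte(K)\times\inte(K^*)$ (for any prescribed $\lambda$ simply take $x=I$ and $s=\diag(\lambda_1,\dots,\lambda_m)$), the supremum displayed above equals $1/\inf_v(\cdots)=\rho_m$ by the optimization already solved in the proof of Theorem~\ref{thm:2}. The only real work is the spectral reduction of $\sigma_x(\tilde x)$ and $\delta_F(x,s)$; I expect this congruence-invariance step to be the main (and only) obstacle, after which the semidefinite problem collapses to the diagonal one and no new optimization is needed. In particular, a worst-case pair is given by $x=I$ together with any $s\in\SS^m_{++}$ whose eigenvalues form a permutation of the vector $v$ in~\eqref{e-v-opt}.
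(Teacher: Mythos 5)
Your proof is correct and takes essentially the same route as the paper: invoke Theorem~\ref{thm:3.5}, reduce $\sigma_x(\tilde x)$ and $\delta_F(x,s)$ to the eigenvalues of $xs$, and observe that the resulting optimization over $\R^m_{++}$ is identical to the one already solved in the proof of Theorem~\ref{thm:2}. One small remark: your formula $\sigma_x(\tilde x)=1/\lambda_{\min}(xs)$ is the correct one (consistent with the subsequent displayed infimum and with the orthant case), while the paper's displayed $\sigma_x(\tilde x)=\lambda_{\min}(xs)$ appears to be a typographical slip.
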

\begin{proof}
Let $x,s\in\SS^m_{++}$.
Then
\[
\sigma_x(\tilde x) = \lambda_\mathrm{min}(xs)
\qquad
\delta_F(x,s) = \Tr(x^{-1}s^{-1}) - \frac{m(m-1)}{\Tr(xs)}.
\]
If we denote the eigenvalues of $xs$ in these expressions by $v_i$,
and apply~\eqref{e-thm-3.4} in Theorem~\ref{thm:3.5}, we obtain
\[
\frac{1}{\widecheck \xi_F} 
= \inf_{v\in\R^m_{++}} \left( (\min_i v_i) 
\left(\sum_{j=1}^m \frac{1}{v_j} - \frac{m(m-1)}{\sum_{j=1}^m v_j}
\right)\right).
\]
The rest of the proof follows as in the proof of Theorem~\ref{thm:2}.
The statement about the Nesterov--Todd scaling and the integral scaling
also follow from Theorem~\ref{thm:3.5}. 
\end{proof}

Figure~\ref{fig:banded_toeplitz2} (left) shows the cone $\SS^2_+$ intersected
with a hyperplane $\{ x \in \SS^2_+ : \Tr(sx) = 2\mu\}$, for a given positive
definite $s$ and positive $\mu$.  Three points $\hat x_i$ in the plane 
attain $\widecheck \xi_F$  together with $s$.
\begin{figure}
\centering
  \includegraphics[width=0.45\textwidth]{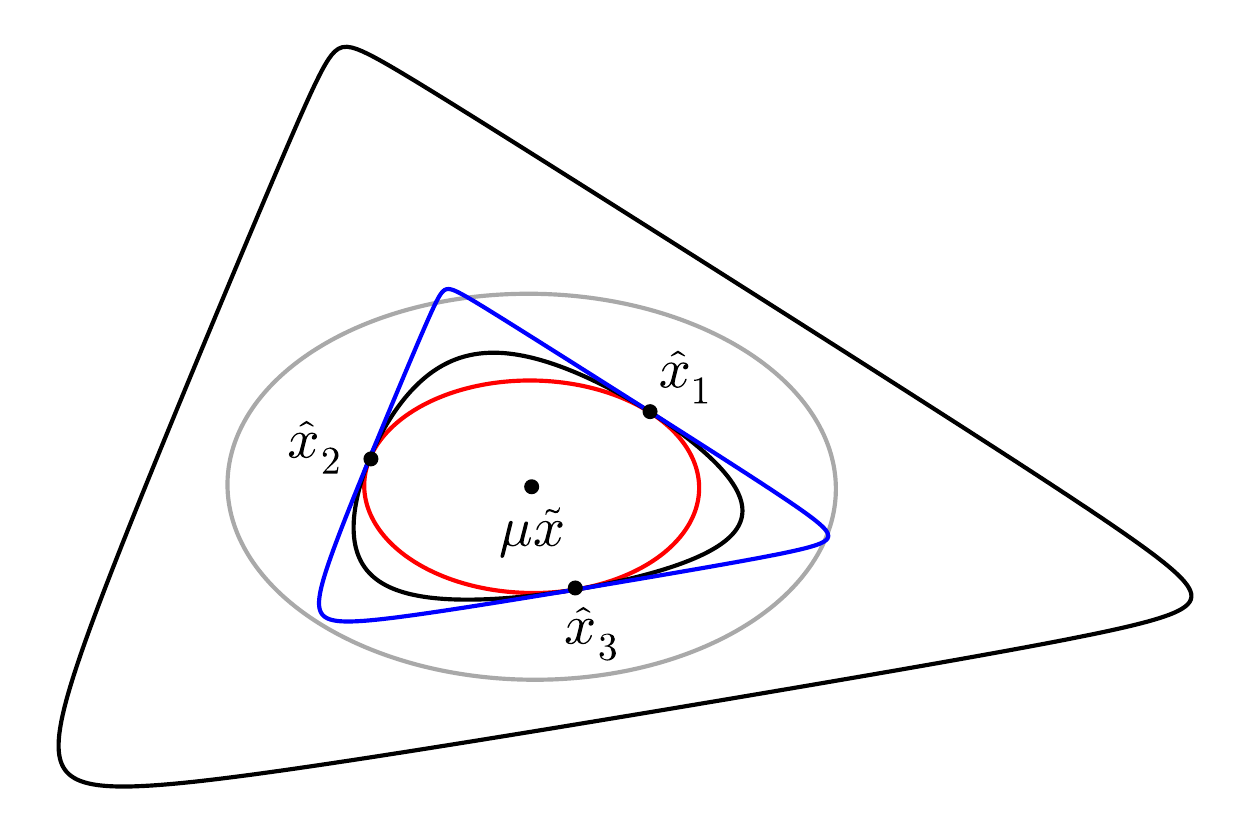}
  \includegraphics[width=0.45\textwidth]{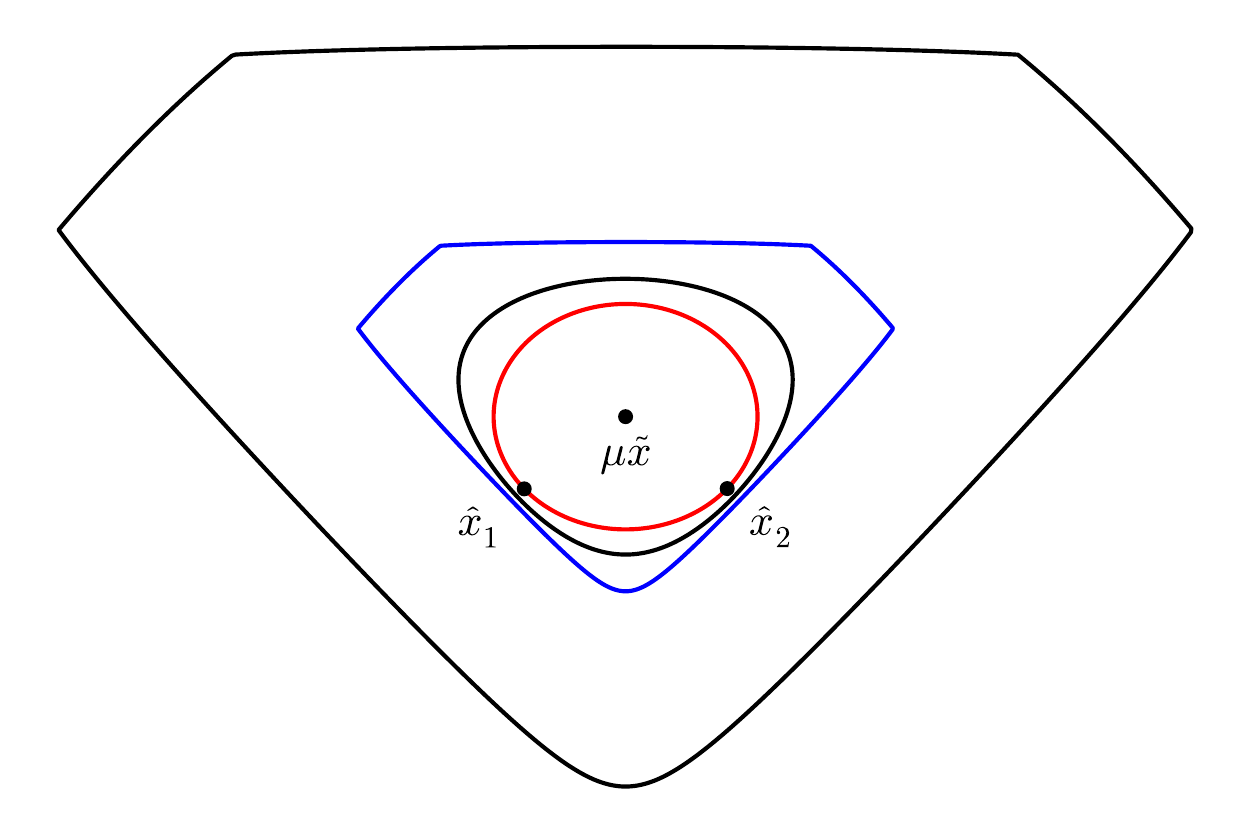}
  \caption{\emph{Left.} The intersection of the three-dimensional positive 
   semidefinite cone $\SS^2_+$ and the hyperplane defined by $\iprod{x}{s}
   = 2\mu$ for a given $s$ and $\mu$.
  \emph{Right.} Tridiagonal Toeplitz cone intersected with the 
   hyperplane
  $\{x \in \R^3\, : \, \iprod{x}{s}=\mu \vartheta\}$ for a given $s$
  and $\mu$.  The red Dikin ellipsoids have radius 
  $\tau_{\vartheta}/(\tau_{\vartheta}+1)$ and contain the
  points $\hat x_i$. The black curves show the boundary of the 
  neighborhood $\gamma_G\leq 1/(\tau_{\vartheta}+1)$ and the blue curves 
  show the boundary of the region $\gamma_{\infty}\leq 1/\tau_{\vartheta}$.}
  \label{fig:banded_toeplitz2}
\end{figure}

The figure on the right illustrates some of the differences between
self-scaled barriers and LHSCBs with negative curvature.
It shows the $m\times m$ tridiagonal positive semidefinite Toeplitz cone
(with $m=5$), intersected with the hyperplane
$\{x : \langle s, x \rangle = m\mu\}$ for a fixed $s\in\inte(K^*)$ and $\mu$.
The barrier is the standard log-det barrier with $\vartheta =m$, i.e.,
of the form $F(x) = -\ln\det(A_0 +x_1 A_1 + x_2 A_2 + x_3A_3)$ for some 
symmetric $m\times m$ matrices $A_0, A_1, A_2, A_3$. 
This barrier has negative curvature but it is not self-scaled.
Two points $\hat x_1, \hat x_2$ attain $\widecheck \xi_F$ together with $s$. 
These points were determined numerically.
In the two figures we also show the boundaries of three neigbourhoods: 
the Dikin ellipsoid with radius $\tau_\vartheta/(\tau_\vartheta  +1)$,
the $\gamma_\mathrm G$-neighbourhood with radius $1/(\vartheta +1)$,
and the $\gamma_\infty$-neighbourhood with radius $1/\vartheta$.
In the example on the right,
\[
\widecheck \xi_F 
< 
\frac{\gamma_\infty(\hat x_i, s)+1} {\gamma_\mathrm G(\hat x_i, s)+1} 
< \rho_\vartheta. 
\]
We also note that the worst-case points $\hat x_1,\hat x_2$ lie on
the boundary of the Dikin ellipsoid with 
radius $\tau_\vartheta/(\tau_\vartheta+1)$.
This will be further investigated in Section~\ref{sec:general_cones}.
\end{subsection}

\begin{subsection}{Second order cone}
Next, we apply Theorem~\ref{thm:3.5} to the logarithmic barrier
for the second order cone and direct sums of second order cones.
For the second order cone $K= K^* = \mathcal Q_p 
:= \{(y,t) \in \R^p \oplus \R : \|y\|_2 \leq t\}$, the barrier
\begin{equation} \label{e-soc-F}
F(y,t) :=\left\{ \begin{array}{rl} -\ln(t^2 - y^\top y) 
    & \textup{ if } (y,t)  \in \inte(\mathcal Q_p); \\
+\infty, & \textup{ otherwise,}
\end{array}
\right.
\end{equation}
and the corresponding dual barrier
\[
F_*(w,r) :=\left\{ \begin{array}{rl} -\ln(r^2 - w^\top w) -2 
    & \textup{ if } (w,r) \in \inte(\mathcal Q_p); \\
+\infty, & \textup{ otherwise,}
\end{array}
\right.
\]
are self-scaled with $\vartheta = 2$.  Here, the dual cone and
dual barrier are defined with respect to the Euclidean inner product
$\langle s, x \rangle := s^\top x$.
If $K$ is the direct sum of $m$ second order cones for some positive 
integer $m$, then the barrier
\begin{equation} \label{e-soc-F2}
F(y_1,t_1, \ldots, y_m, t_m) 
:=\left\{ \begin{array}{rl} -\sum\limits_{i=1}^m
  \ln(t_i^2 - y_i^\top y_i) 
    & \textup{ if } (y_i,t_i)  \in \inte(\mathcal Q_p), \; i \in \{1,\ldots,m\}; \\
+\infty, & \textup{ otherwise,}
\end{array}
\right.
\end{equation}
is self-scaled with $\vartheta = 2m$.

\begin{thm}
\label{cor:3.8}
Let $F$ be the barrier function~\eqref{e-soc-F2}.
Then $\xi_F = \widecheck \xi_F = \rho_{2m}$.
Moreover, the Nesterov--Todd scaling and integral scaling at $x,s$ are 
in $\cT(x,s;\xi_F)$.
\end{thm}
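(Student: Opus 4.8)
The plan is to use the general machinery already in place.  Since~\eqref{e-soc-F2} is a $\vartheta$-self-scaled barrier for the symmetric cone $K$, the direct sum of $m$ copies of $\mathcal{Q}_p$, with $\vartheta=2m$, Theorem~\ref{thm:3.5} applies directly: it gives $\xi_F=\widecheck\xi_F=\sup_{x,s}\sigma_x(\tilde x)/\delta_F(x,s)$ (the second expression in~\eqref{e-thm-3.4}), and it also tells us, once the value of this supremum is known, that for every pair $x\in\inte(K)$, $s\in\inte(K^*)$ both the Nesterov--Todd scaling and the integral scaling~\eqref{e-T-integral} lie in $\cT(x,s;\xi_F)$.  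Hence the only work left is to evaluate $\sup_{x,s}\sigma_x(\tilde x)/\delta_F(x,s)$ and show it equals $\rho_{2m}$.

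The key idea is to reduce this supremum to the one-dimensional extremal problem already solved in the proof of Theorem~\ref{thm:2}, with $n$ replaced by $2m$.  I would fix $x\in\inte(K)$, $s\in\inte(K^*)$, decompose $x=(x^{(1)},\dots,x^{(m)})$, $s=(s^{(1)},\dots,s^{(m)})$ with $x^{(i)},s^{(i)}\in\inte(\mathcal{Q}_p)$, and use the rank-two spectral calculus of the Euclidean Jordan algebra attached to $\mathcal{Q}_p$: each block contributes two positive generalized eigenvalues of the pair $(s^{(i)},x^{(i)})$, and collecting these over the $m$ blocks (with a fixed normalizing factor chosen so that sums of eigenvalues match the Euclidean inner products) produces a vector $v\in\R^{2m}_{++}$ for which
\[
\langle s,x\rangle=\sum_{j=1}^{2m}v_j,\qquad
\langle F'(x),F'_*(s)\rangle=\sum_{j=1}^{2m}\frac{1}{v_j},\qquad
\sigma_x(\tilde x)=\frac{1}{\min_{1\le j\le 2m}v_j}.
\]
The first two identities will follow from the spectral expansions of $\langle s,x\rangle$ and of $\langle F'(x),F'_*(s)\rangle=\langle\tilde x,\tilde s\rangle$ together with the direct-sum rule, and the third from $\sigma_x(\tilde x)=\max_i\sigma_{x^{(i)}}(\tilde x^{(i)})$ combined with the formula $\sigma_a(b)=\lambda_{\max}(P(a^{-1/2})b)$ valid for $a,b\in\inte(\mathcal{Q}_p)$.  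These identities then give
\[
\delta_F(x,s)=\sum_{j=1}^{2m}\frac{1}{v_j}-\frac{2m(2m-1)}{\sum_{j=1}^{2m}v_j},\qquad
\frac{\sigma_x(\tilde x)}{\delta_F(x,s)}=\left((\min_j v_j)\Bigl(\sum_{j=1}^{2m}\frac{1}{v_j}-\frac{2m(2m-1)}{\sum_{j=1}^{2m}v_j}\Bigr)\right)^{-1}.
\]

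Finally, I would observe that every $v\in\R^{2m}_{++}$ is realized by some admissible pair $(x,s)$ (for instance take each $x^{(i)}$ to be the Jordan identity of $\mathcal{Q}_p$ and pick $s^{(i)}$ with the prescribed eigenvalues), so that $1/\widecheck\xi_F$ is exactly the infimum over $v\in\R^{2m}_{++}$ of $(\min_j v_j)\bigl(\sum_j 1/v_j-2m(2m-1)/\sum_j v_j\bigr)$.  This is precisely the minimization problem analyzed in the proof of Theorem~\ref{thm:2} with $n=2m$, whose optimal value is $\rho_{2m}$; therefore $\xi_F=\widecheck\xi_F=\rho_{2m}$, and the scaling statements follow from Theorem~\ref{thm:3.5} as noted above.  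The one step that requires genuine care is the verification of the three displayed eigenvalue identities from the spectral description of $\mathcal{Q}_p$ and the behavior of $\sigma$, $\langle\cdot,\cdot\rangle$ and $F'$ under the direct sum; once these are in hand the remainder is a verbatim repetition of the nonnegative-orthant argument.
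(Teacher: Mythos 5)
Your proposal is correct and follows the same route the paper indicates: the paper's proof of this theorem is literally the single sentence that it is ``along the lines of the proofs of Theorems~\ref{thm:2} and~\ref{cor:3.7},'' and your sketch faithfully elaborates that by using the rank-two Jordan-algebra spectral decomposition (together with the rescaling that reconciles the Jordan trace form with the Euclidean inner product used in~\eqref{e-soc-F2}, and the observation that every $v\in\R^{2m}_{++}$ is realized) to reduce the supremum in~\eqref{e-thm-3.4} to the problem~\eqref{e-check-F-pf-2} with $n$ replaced by $2m$. This is also exactly the mechanism behind the later, more general Theorem~\ref{thm-ss-barrier}, so no further comment is needed.
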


The proof is along the lines of the proofs of 
Theorems~\ref{thm:2} and~\ref{cor:3.7}. 
\end{subsection}

\begin{subsection}{General symmetric cones}
The results in the previous sections generalize to all symmetric cones 
and optimal self-scaled barriers. 
This can be shown by extending the proof of Theorem~\ref{thm:2}
using the general theory of self-scaled barriers and Euclidean Jordan algebras 
\cite{NT1997,NT1998,Tuncel1998,Fay:97,Fay:97b,AlG:03,ScA:01,ScA:03,Stu:00,
HauserGuler2002,Schmieta2000,FaK:94}.  In particular,
Hauser and G\"uler have proved the following classification result
for self-scaled barriers \cite[Theorem 5.5]{HauserGuler2002}.
Let $K = K_1 \oplus \cdots \oplus K_m$ be a symmetric cone, decomposed into 
irreducible symmetric components $K_1,\ldots,K_m$.
Then all self-scaled barriers for $K$ can be expressed as  
\begin{equation} \label{e-ss-barrier}
F(x_1,\ldots,x_m) = 
 \left\{ \begin{array}{rl}
  c_0 - \sum_{i=1}^m c_i \ln \det_{K_i}(x_i), & x_i\in\inte(K_i), \;
 i \in \{1,\ldots,m\}; \\
  + \infty, & \mbox{otherwise,} \end{array}\right.
\end{equation}
where $c_i\geq 1$ for $i \in \{1,\ldots,m\}$, and $\det_{K_i}(x_i)$ is the 
determinant of $x_i$ in the Jordan algebra associated with $K_i$.
The logarithmic homogeneity parameter for~\eqref{e-ss-barrier}
is equal to $\vartheta  := c_1r_1+\cdots +c_mr_m$, where
$r_i$ is the rank of the Jordan algebra associated with $K_i$.
The self-scaled barrier $F$ defined in~\eqref{e-ss-barrier} is 
\emph{optimal} if and only if $c_i=1$ for all $i \in \{1,\ldots,m\}$.  For an optimal self-scaled 
barrier $\vartheta = r_1 + \cdots + r_m$, the rank of the cone $K$; see~\cite{GulerTuncel1998}.

\begin{thm} \label{thm-ss-barrier}
Let $F$ be an optimal self-scaled barrier for a symmetric cone
$K$, with rank $\vartheta \geq 2$.
Then $\xi_F = \widecheck \xi_F = \rho_\vartheta$.
Moreover, the Nesterov--Todd scaling and the integral scaling at $x,s$ are 
in $\cT(x,s;\xi_F)$.
\end{thm}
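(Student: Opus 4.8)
The plan is to reduce the evaluation of $\widecheck{\xi}_F$ to the scalar optimization problem already solved in the proof of Theorem~\ref{thm:2}. Note first that Theorem~\ref{thm:3.5} applies to every self-scaled barrier, and already supplies the identity $\xi_F = \widecheck{\xi}_F = \sup_{x\in\inte(K),\, s\in\inte(K^*)} \sigma_x(\tilde x)/\delta_F(x,s)$ as well as the membership of the Nesterov--Todd scaling and the integral scaling in $\cT(x,s;\xi_F)$. Hence the only new content is to show that this supremum equals $\rho_\vartheta$.

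First I would normalize the barrier via the Hauser--G\"uler classification~\eqref{e-ss-barrier}: an \emph{optimal} self-scaled barrier for $K = K_1\oplus\cdots\oplus K_m$ is $F(x) = c_0 - \sum_i \ln\det_{K_i}(x_i)$, with $\vartheta = r_1+\cdots+r_m$ equal to the rank of $K$, and the constant $c_0$ affects none of $F'$, $F''$, $F'_*$, $F''_*$, hence none of $\delta_F$ or $\sigma_x(\tilde x)$. Since $\det_K$, the trace inner product, the Jordan inverse, and the quadratic representation $P$ all split over the irreducible components, and a Jordan frame of $K$ is a disjoint union of Jordan frames of the $K_i$, it suffices to work in a single Euclidean Jordan algebra of rank $\vartheta$, with identity $e$, trace $\Tr = \iprod{e}{\cdot}$, and quadratic representation $P$. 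For this barrier $F'(x) = -x^{-1}$, $F''(x) = P(x)^{-1}$, $F'_*(s) = -s^{-1}$, and $\tilde x = -F'_*(s) = s^{-1}$.

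Next I would pass to spectral coordinates. For $x\in\inte(K)$ and $s\in\inte(K^*) = \inte(K)$, let $v_1,\dots,v_\vartheta > 0$ be the eigenvalues of $P(x^{1/2})s \in \inte(K)$. Using $P(x^{1/2})e = x$, the inversion identity $\bigl(P(a)b\bigr)^{-1} = P(a^{-1})b^{-1}$, self-adjointness of $P(\cdot)$ with respect to $\iprod{\cdot}{\cdot}$, and the fact that $P(x^{-1/2}) = P(x^{1/2})^{-1}$ is an automorphism of $K$, I would establish
\[
\iprod{s}{x} = \sum_{i=1}^\vartheta v_i, \qquad \iprod{F'(x)}{F'_*(s)} = \iprod{x^{-1}}{s^{-1}} = \sum_{i=1}^\vartheta \frac{1}{v_i}, \qquad \sigma_x(\tilde x) = \sigma_x(s^{-1}) = \frac{1}{\min_i v_i},
\]
where the last equality holds because $\beta x - s^{-1}\in K \iff \beta e - (P(x^{1/2})s)^{-1}\in K \iff \beta \geq 1/\min_i v_i$. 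By~\eqref{e-delta-F} this gives $\delta_F(x,s) = \sum_i 1/v_i - \vartheta(\vartheta-1)/\sum_i v_i$, hence
\[
\frac{\sigma_x(\tilde x)}{\delta_F(x,s)} = \left[(\min_i v_i)\Bigl(\sum_{j=1}^\vartheta \frac{1}{v_j} - \frac{\vartheta(\vartheta-1)}{\sum_{j=1}^\vartheta v_j}\Bigr)\right]^{-1}.
\]
Conversely, every $v\in\R^\vartheta_{++}$ arises this way: fixing a Jordan frame $c_1,\dots,c_\vartheta$ of $K$, take $s = e$ and $x = \sum_i v_i c_i$, so $P(x^{1/2})s = x$ has eigenvalues $v_i$. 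Therefore $1/\widecheck{\xi}_F = \inf_{v\in\R^\vartheta_{++}} (\min_i v_i)\bigl(\sum_j 1/v_j - \vartheta(\vartheta-1)/\sum_j v_j\bigr)$, which is exactly problem~\eqref{e-check-F-pf} with $n = \vartheta$ (meaningful since $\vartheta \geq 2$); solving it as in the proof of Theorem~\ref{thm:2}, with optimal solution $v = (\tau_\vartheta+1,\dots,\tau_\vartheta+1,1)$ up to permutation and positive scaling, gives $\widecheck{\xi}_F = \rho_\vartheta$. Together with Theorem~\ref{thm:3.5} this proves the theorem.

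The step I expect to be the main obstacle is the spectral reduction of the Minkowski gauge to $\sigma_x(\tilde x) = 1/\min_i v_i$: it rests on conjugating $\beta x - s^{-1}$ by the $K$-automorphism $P(x^{-1/2})$ and on the Jordan inversion identity, and one also has to check that the reduction from a general symmetric cone to a single Jordan algebra is compatible with the trace, inverse and gauge computations above (it is, as all of these act componentwise on $K_1\oplus\cdots\oplus K_m$). Everything downstream of that reduction is a verbatim reuse of the optimization analysis in the proof of Theorem~\ref{thm:2}.
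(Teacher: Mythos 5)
Your proposal is correct and takes essentially the same route as the paper's (deliberately omitted, only sketched) proof: reduce via the quadratic representation to a spectral vector $v\in\R^\vartheta_{++}$, express $\sigma_x(\tilde x)$ and $\delta_F(x,s)$ as symmetric functions of $v$, and identify the resulting scalar problem with \eqref{e-check-F-pf-2}, whose solution gives $\rho_\vartheta$, with Theorem~\ref{thm:3.5} supplying the rest. As a small bonus your gauge formula $\sigma_x(\tilde x)=1/\min_i v_i$ is the correct one (it matches the explicit orthant and PSD computations earlier in the paper), whereas the paper's sketch writes $\sigma_x(\tilde x)=\min_i v_i$, an inversion that is harmless only because the ratio $\sigma_x(\tilde x)/\delta_F(x,s)$ is degree-zero homogeneous in $v$ and one may normalize $\min_i v_i=1$.
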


\begin{proof}
Let $K$ be a symmetric cone. By the above-mentioned classification~ \cite[Theorem 5.5]{HauserGuler2002}
of optimal self-scaled barriers, 
\[
F(x) = \left\{ \begin{array}{rl}
  c_0 - \ln \det_{K}(x), & x\in\inte(K); \\
  + \infty, & \mbox{otherwise,} \end{array}\right.
\]
where $c_0$ is a constant
and $\det_K(x)$ is the determinant of $x$ in the Jordan algebra associated with $K$ (if $K$ is not
irreducible, then $\det_K$ is the product of the determinants $\det_{K_i}$).
Let $x\in\inte(K)$ and $s\in \inte(K^*)$. Denote by $v_i$ for $i \in \{1,\ldots,\vartheta\}$, the eigenvalues 
of $P(s^{1/2}) x$ where $P$ is the quadratic representation
of the Jordan algebra associated with $K$ (see \cite[Chapter 2]{FaK:94}
for the definitions of eigenvalues, trace, inverse, square root, and quadratic 
representation in Jordan algebras).

Next, we prove the key observations that
\[
 \sigma_x(\tilde x) = \frac{1}{\min\limits_{i \in \{1,\ldots,\vartheta\}} v_i}, \qquad
 \textup{ and } \qquad
  \delta_F(x,s) = \left(\sum_{i=1}^\vartheta \frac{1}v_i \right)
   - \frac{\vartheta(\vartheta-1)}{\sum_{i=1}^\vartheta v_i}.
\]
Using the definitions and Proposition III.4.2 part (ii) of~\cite{FaK:94}, we
have
\[
\sigma_x(\tilde x) = \inf \left\{\beta \geq 0 \, : \, \beta x - s^{-1} \in K \right\}.
\]
Since $P(s^{1/2}) \in \aut(K)$, and $P(s^{1/2})s^{-1}=e$ (where $e$ is the \emph{identity
element} in the underlying Jordan algebra), we obtain
\[
\sigma_x(\tilde x) = \inf \left\{\beta \geq 0 \, : \, \beta P(s^{1/2})x  - e \in K \right\},
\]
establishing $\sigma_x(\tilde x) = 1/\min_{i \in \{1,\ldots,\vartheta\}} v_i.$

Next, using the last expression in \eqref{e-delta-F}, Proposition III.4.2 part (ii) of~\cite{FaK:94},
and the facts that\\
$\iprod{s^{-1}}{x^{-1}} = \Tr\left(P(s^{-1/2})x^{-1}\right)$, 
$\iprod{s}{x} = \Tr\left(P(s^{1/2})x\right)$,
we obtain
\[
\delta_F(x,s) = \Tr\left(P(s^{-1/2})x^{-1}\right) - \frac{\vartheta(\vartheta-1)}{\Tr\left(P(s^{1/2})x\right)}.
\]
Since the trace is the sum of the eigenvalues of its argument and
the eigenvalues of $P(s^{-1/2})x^{-1}$ are the reciprocals of
the eigenvalues of $P(s^{1/2})x$, the claimed expression for
$\delta_F(x,s)$ follows from the definition of $v_i$.

Substituting these expressions in~\eqref{e-thm-3.4} shows that
$1/\widehat\xi_F$ is the solution of the same optimization problem 
\eqref{e-check-F-pf-2} as in the proof of Theorem~\ref{thm:2} 
(with $n$ replaced with $\vartheta$).
The last statement in the theorem was proved in Theorem~\ref{thm:3.5}.
\end{proof}

Note the condition in Theorem~\ref{thm-ss-barrier} that $F$
is an \emph{optimal} self-scaled barrier.  
As an example of a non-optimal self-scaled barrier, consider
$F(x_1,x_2) := -2\ln x_1 - 2\ln x_2$ for $K := \R^2_+$,
and the corresponding dual barrier 
$F_*(s_1,s_2) = -2\ln s_1 - 2\ln(s_2) -4$.
The barrier $F$ is self-scaled with $\vartheta=4$, but not optimal. 
In the expression~\eqref{e-thm-3.4} for $\xi_F$ we have  
$\tilde x = (2/s_1, 2/s_2)$, 
\[
\sigma_x(\tilde x) = \frac{2}{\min{\{x_1s_1,x_2s_2\}}}, \quad
\quad
\delta_F(x,s)
 = \frac{4}{x_1s_1} + \frac{4}{x_2s_2}
    - \frac{\vartheta(\vartheta-1)}{x_1s_1+x_2s_2}.
\]
Hence, if we define $v_i =x_is_i$, 
\[
\frac{1}{\xi_F} 
= \inf_{v_1,v_2 > 0}
 \frac{\min{\{v_1,v_2\}}}{2}
 \left( \frac{4}{v_1} + \frac{4}{v_2}
    - \frac{\vartheta(\vartheta-1)}{v_1+v_2}\right)
 = \inf_{v_1\geq 1}
 \left( \frac{2}{v_1} + 2 - \frac{\vartheta (\vartheta-1)}{2(v_1+1)} \right).
\]
This gives $\xi_F = 1.0774 < \rho_\vartheta = 1.2071$.
\end{subsection}
\end{section}

\begin{section}{General logarithmically homogeneous self-concordant 
barriers}

\label{sec:general_cones}

In Section~\ref{sec:negative_curvature} an upper bound on $\xi_F$ was derived 
for LHSCBs with negative curvature (Theorem~\ref{thm:hatxi4/3}).
In this section, we present bounds for the relaxed complexity measure $\widecheck{\xi}_F$
that apply to all LHSCBs. 
We also include examples in which the solution of the optimization problem in 
the definition of $\widecheck{\xi}_F$ in~\eqref{e-check-xi-eig}
is examined numerically.

\subsection{Bounds on $\widecheck{\xi}_F$}
In Section~\ref{sec:negative_curvature}, the upper bound $4/3$ on 
$\widecheck{\xi}_F$ and $\xi_F$ was proved for negative curvature LHSCBs.
The proof was based on Proposition~\ref{prop:4.3}. 
In this section, we relax the negative curvature assumption. Instead
of Proposition~\ref{prop:4.3},  we have a more generally applicable, but 
relatively (and somewhat significantly) weaker conclusion in 
Proposition~\ref{prop:5.2}.
\begin{prop}\cite[Corollary 1.3]{NT2016}
\label{prop:5.1}
Let $F$ be a $\vartheta$-LHSCB for $K$, and $x, u \in \inte(K)$ such that $\iprod{F'(x)}{u-x} \geq 0$. Then, $F''(x) \preceq 4 \vartheta^2 F''(u).$
\end{prop}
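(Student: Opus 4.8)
The plan is to reduce the Loewner inequality to a pointwise bound on local norms and to prove that bound by combining one special property of the point $x$ with one inequality valid at every interior point. First I would restate the hypothesis: since $\iprod{F'(x)}{x}=-\vartheta$ by \eqref{e-log-hom-props-2}, the assumption $\iprod{F'(x)}{u-x}\geq 0$ is equivalent to $\iprod{-F'(x)}{u}\leq\iprod{-F'(x)}{x}=\vartheta$. Likewise $F''(x)\preceq 4\vartheta^2 F''(u)$ is equivalent to $\|h\|_x\leq 2\vartheta\|h\|_u$ for all $h$, so it suffices to bound $\|h\|_x$ for $h$ in the unit Dikin ellipsoid $E_{u,1}$ at $u$.

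The key step is the pointwise estimate
\begin{equation}\label{e-proposal-key}
\|z\|_w\ \leq\ \iprod{-F'(w)}{z}\qquad\text{for all }w\in\inte(K),\ z\in K.
\end{equation}
To establish \eqref{e-proposal-key} I would fix $w$ and $z\in\inte(K)$ and examine $g(t):=\iprod{-F'(w+tz)}{z}$ on $t\geq 0$. It is positive because $-F'(w+tz)\in\inte(K^*)$; it is decreasing because $g'(t)=-\|z\|_{w+tz}^2$; and $g(t)\to 0$ as $t\to\infty$ because logarithmic homogeneity \eqref{e-log-hom-props} gives $-F'(w+tz)=-\tfrac1t F'(z+w/t)$, which tends to $0$. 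Self-concordance \eqref{e-sc} controls the variation of $\phi(t):=\|z\|_{w+tz}^2$ via $|\phi'(t)|\leq 2\phi(t)^{3/2}$, so $\bigl|\tfrac{d}{dt}\phi(t)^{-1/2}\bigr|\leq 1$ and hence $\phi(t)\geq\phi(0)\big/\bigl(1+t\,\phi(0)^{1/2}\bigr)^2$. Integrating $-g'=\phi$ over $[0,\infty)$ then gives $g(0)=\int_0^\infty\phi(t)\,dt\geq\phi(0)^{1/2}=\|z\|_w$, which is \eqref{e-proposal-key}; the case $z\in\bd(K)$ follows by continuity.

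With \eqref{e-proposal-key} available the argument finishes quickly. For any $h$ with $\|h\|_u<1$, both $u+h$ and $u-h$ lie in $\inte(K)$ (the open unit Dikin ellipsoid is contained in $\inte(K)$). Applying \eqref{e-proposal-key} with $w=x$ to $z=u+h$ and to $z=u-h$ and adding,
\[
\|u+h\|_x+\|u-h\|_x\ \leq\ \iprod{-F'(x)}{u+h}+\iprod{-F'(x)}{u-h}\ =\ 2\iprod{-F'(x)}{u}\ \leq\ 2\vartheta,
\]
so the triangle inequality gives $2\|h\|_x=\|(u+h)-(u-h)\|_x\leq 2\vartheta$, i.e.\ $\|h\|_x\leq\vartheta$. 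Since this holds for every $h$ with $\|h\|_u<1$, homogeneity yields $\|h\|_x\leq\vartheta\|h\|_u$ for all $h$, that is $F''(x)\preceq\vartheta^2 F''(u)$ — stronger than the stated bound, and sharp, since on $K=\R^n_+$ pairs $x,u$ with one ratio $u_i/x_i$ close to $n=\vartheta$ and the others close to $0$ attain it.

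The main obstacle I anticipate is \eqref{e-proposal-key}: one has to recognize that the hypothesis should be used through a pointwise comparison of $\|\cdot\|_x$ with the linear functional $-F'(x)$ on the cone, and then locate the monotone quantity along the ray $w+tz$ whose derivative is precisely $-\|z\|_{w+tz}^2$ and whose decay is governed by self-concordance. The remaining pieces — the logarithmic-homogeneity identities, the decomposition $h=\tfrac12\bigl((u+h)-(u-h)\bigr)$ with $u\pm h$ both in $K$, and the inclusion of the open unit Dikin ellipsoid in $\inte(K)$ — are routine.
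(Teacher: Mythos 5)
Your proof is correct, and it in fact establishes the stronger bound $F''(x)\preceq\vartheta^2 F''(u)$ (a factor of $4$ better than the statement). The paper does not reprove this proposition but simply cites it to \cite[Corollary~1.3]{NT2016}, so there is no ``paper proof'' to compare against directly; what can be compared is your key ingredient. Your inequality $\|z\|_w\leq\iprod{-F'(w)}{z}$ for $z\in K$ is exactly the right-hand half of the paper's Lemma~\ref{lem:5.1}, which the authors cite to \cite{NN1994,NT2016,Nesterov2018} without proof --- your derivation via the monotone decreasing quantity $g(t)=\iprod{-F'(w+tz)}{z}$, the Riccati-type differential inequality $|\phi'|\leq 2\phi^{3/2}$ coming from self-concordance, and the identity $g(0)=\int_0^\infty\phi$ is a clean self-contained argument for it. What you add on top, and what sharpens the constant, is the symmetrization step: instead of bounding $\|h\|_x$ directly, you write $2h=(u+h)-(u-h)$ with both $u\pm h\in\inte(K)$ when $\|h\|_u<1$, apply the linear bound to each summand, and use that the linear functional $\iprod{-F'(x)}{\cdot}$ evaluates the odd part to zero, leaving $2\|h\|_x\leq 2\iprod{-F'(x)}{u}\leq 2\vartheta$. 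This averaging trick is precisely what removes the extra factor of $2$ inside the square, and your nonnegative-orthant example (one ratio $u_i/x_i$ near $\vartheta$, the rest near $0$) shows $\vartheta^2$ is sharp. Two minor remarks: the boundary case $z\in\bd(K)$ in your key estimate is not actually needed in the application since $u\pm h\in\inte(K)$; and it is worth noting explicitly that $g(t)>0$ for all $t\geq 0$, together with $g'\leq 0$ and $g(\infty)=0$, is what justifies $g(0)=\int_0^\infty\phi(t)\,dt$ and makes the integral comparison with the lower envelope of $\phi$ legitimate.
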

Using the above, we conclude the following.
\begin{prop}
\label{prop:5.2}
Let $F$ be a $\vartheta$-LHSCB for $K$, and $x \in \inte(K)$ and 
$s \in \inte(K^*)$. Then, 
\[
\frac{1}{4\vartheta^2 \mu^2}F''(\tilde{x}) \preceq  F''(x) \preceq 4 \vartheta^2 \tilde{\mu}^2F''(\tilde{x}).
\]
\end{prop}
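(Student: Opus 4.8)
The plan is to apply Proposition~\ref{prop:5.1} twice, once in the primal and once in the dual, and then translate the dual statement back to the primal using the duality relations~\eqref{e-dual-grad} and~\eqref{e-dual-hess} together with logarithmic homogeneity~\eqref{e-log-hom-props}. Recall the notation $\tilde s := -F'(x)$, $\tilde x := -F_*'(s)$, $\mu = \langle s,x\rangle/\vartheta$, and $\tilde\mu = \langle \tilde x,\tilde s\rangle/\vartheta$. First I would verify the sign condition needed to invoke Proposition~\ref{prop:5.1} with the pair $x$ and a suitable positive multiple of $\tilde x$. Since $\langle F'(x), \tilde x\rangle = -\langle \tilde s, \tilde x\rangle = -\vartheta\tilde\mu$ and $\langle F'(x), x\rangle = -\vartheta$, we get $\langle F'(x), \tilde\mu^{-1}\tilde x - x\rangle = -\vartheta + \vartheta = 0 \geq 0$; alternatively, working with $\mu\tilde x$, one has $\langle F'(x), \mu\tilde x - x\rangle = \vartheta(\mu\tilde\mu - 1) = \gamma_{\mathrm G}(x,s) \geq 0$ by Lemma~\ref{lem:intro1}. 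Either choice supplies the hypothesis of Proposition~\ref{prop:5.1}.

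Applying Proposition~\ref{prop:5.1} to $x$ and $u := \mu\tilde x$ gives $F''(x) \preceq 4\vartheta^2 F''(\mu\tilde x)$, and then logarithmic homogeneity~\eqref{e-log-hom-props} yields $F''(\mu\tilde x) = \mu^{-2}F''(\tilde x)$, so $F''(x) \preceq 4\vartheta^2\mu^{-2}F''(\tilde x)$. To obtain the reverse inequality, I would run the same argument on the dual side. By symmetry of the setup, apply Proposition~\ref{prop:5.1} to $F_*$ at the point $s$ with $u := \tilde\mu\, \tilde s$ (the analogue of $\mu\tilde x$, since the roles of the primal and dual barrier parameters $\mu,\tilde\mu$ swap): the sign condition $\langle F_*'(s), \tilde\mu\tilde s - s\rangle \geq 0$ holds because $\langle F_*'(s), s\rangle = -\vartheta$ and $\langle F_*'(s), \tilde s\rangle = \langle -\tilde x, \tilde s\rangle = -\vartheta\tilde\mu$, giving $\langle F_*'(s), \tilde\mu\tilde s - s\rangle = -\vartheta\tilde\mu^2 + \vartheta = \vartheta(1-\tilde\mu^2)$—here I need to be a little careful, since this may be negative. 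The cleaner route is to apply Proposition~\ref{prop:5.1} to $F_*$ at $s$ with $u := s$ playing the passive role and the other point being $\tilde s$ scaled appropriately so that the gradient-inner-product is exactly zero, namely $u := \tilde s$ and base point chosen so the condition reads $\geq 0$; concretely, $\langle F_*'(\tilde\mu^{-1}\tilde s), \cdot\rangle$ computations give a zero inner product. This produces $F_*''(s) \preceq 4\vartheta^2 \tilde\mu^{-2} F_*''(\tilde s)$ after using $F_*''(\tilde\mu\tilde s) = \tilde\mu^{-2}F_*''(\tilde s)$.

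Finally I would convert this dual inequality into a primal one. Since $\tilde s = -F'(x)$, the Hessian identity~\eqref{e-dual-hess} gives $F_*''(\tilde s) = F_*''(-F'(x)) = F''(x)^{-1}$, and likewise $F_*''(s) = F''(-F_*'(s))^{-1} = F''(\tilde x)^{-1}$. Substituting these into $F_*''(s) \preceq 4\vartheta^2\tilde\mu^{-2}F_*''(\tilde s)$ and inverting (which reverses the Löwner order) yields $F''(\tilde x) \preceq 4\vartheta^2\tilde\mu^{-2}F''(x)$, i.e. $\frac{1}{4\vartheta^2\mu^2}F''(\tilde x) \preceq F''(x)$ once we observe the exponent bookkeeping. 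I expect the main obstacle to be precisely this bookkeeping: getting the factors $\mu$ versus $\tilde\mu$ in the right places and verifying the sign hypothesis of Proposition~\ref{prop:5.1} on the dual side, since $\mu$ and $\tilde\mu$ are not reciprocals off the central path and the naive choice $u := \tilde\mu\tilde s$ gives a sign condition $\vartheta(1-\tilde\mu^2)$ that can fail. The fix is to pick the scaling of the second point so that the gradient inner product vanishes exactly (using $\langle F_*'(s), \tilde s\rangle = -\vartheta\tilde\mu$ and $\langle F_*'(s), s\rangle = -\vartheta$, the right multiple is $\tilde\mu^{-1}$, giving inner product $0$), and then absorb the resulting homogeneity factor $\tilde\mu^{-2}$ — this is exactly the $\tilde\mu^2$ appearing in the statement.
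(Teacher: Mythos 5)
Your argument for the right-hand inequality works (in fact the choice $u=\mu\tilde x$ gives the stronger bound $F''(x)\preceq 4\vartheta^2\mu^{-2}F''(\tilde x)$, which dominates the stated $4\vartheta^2\tilde\mu^2 F''(\tilde x)$ since $\mu\tilde\mu\ge 1$; the paper uses $u=\tilde x/\tilde\mu$ to get the stated constant directly). However, the left-hand inequality falls apart. Applying Proposition~\ref{prop:5.1} to $F_*$ \emph{at base point $s$} can only produce an upper bound of the form $F_*''(s)\preceq c\,F_*''(\tilde s)$; through the duality identities $F_*''(s)=F''(\tilde x)^{-1}$, $F_*''(\tilde s)=F''(x)^{-1}$, and the fact that inverting positive definite matrices flips Löwner order \emph{and inverts the constant}, this translates back to $F''(x)\preceq c\,F''(\tilde x)$ — again an upper bound on $F''(x)$, redundant with the right-hand inequality rather than the lower bound you need. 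Your final line inverts incorrectly (you keep $\tilde x$ on the small side and keep $c$ rather than $1/c$), and the homogeneity factor is also inconsistent: the sign-condition--satisfying choice $u=\tilde s/\tilde\mu$ produces $F_*''(\tilde s/\tilde\mu)=\tilde\mu^2 F_*''(\tilde s)$, not $\tilde\mu^{-2}$; the factor $\tilde\mu^{-2}$ would come from $u=\tilde\mu\tilde s$, precisely the choice you already discarded.

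To obtain a \emph{lower} bound on $F''(x)$ you must instead take $\tilde x$ as the base point. The paper's proof stays in the primal and applies Proposition~\ref{prop:5.1} a second time with base point $\tilde x$ and $u=x/\mu$; the sign condition again vanishes (since $F'(\tilde x)=-s$ and $\langle s,x/\mu\rangle=\vartheta=\langle s,\tilde x\rangle$), giving $F''(\tilde x)\preceq 4\vartheta^2 F''(x/\mu)=4\vartheta^2\mu^2 F''(x)$, which is exactly the left-hand inequality. Your dual-barrier strategy can be repaired in the same spirit by using $\tilde s$ as the base point and $u=s/\mu$, but with $s$ as the base point there is no choice of $u$ that produces the lower bound.
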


\begin{proof}
Define $u:=\tilde x/\tilde{\mu}$. Then $\iprod{F'(x)}{u-x} = 
(\vartheta \tilde{\mu})/\tilde\mu -\vartheta= 0$ and, by 
Proposition~\ref{prop:5.1}, 
$F''(x) \preceq 4\vartheta^2 F''\left(-F'(s)/\tilde{\mu}\right)$.
This proves the right-hand side inequality in the statement. 
Next, define $u:=x/\mu \in \inte(K)$.  Then $\iprod{F'(\tilde{x})}{u-\tilde{x}} = 
(\vartheta {\mu})/\mu -\vartheta= 0$ and, 
again by Proposition~\ref{prop:5.1}, 
$F''(\tilde{x}) \preceq 4\vartheta^2 F''\left(x/\mu\right)$
proving the other inequality in the statement.
 \end{proof}
 
 Another ingredient we need in this section is next.
 \begin{lem}
 \label{lem:5.1}
 Let $F$ be a $\vartheta$-LHSCB for $K$. Then, for every $x \in \inte(K)$ and for every $u \in K$, we have
\[
\iprod{-F'(x)}{u} \leq \sqrt{\vartheta}\norm{u}_x \leq \sqrt{\vartheta}\iprod{-F'(x)}{u}.
\]
 \end{lem}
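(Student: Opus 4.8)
The plan is to prove the two inequalities in Lemma~\ref{lem:5.1} separately, using the Cauchy--Schwarz inequality in the local norm for the first one and the Dikin ellipsoid containment for the second. For the left inequality, write $\iprod{-F'(x)}{u} = \iprod{F''(x)x}{u}$, using the identity $F''(x)x = -F'(x)$ from~\eqref{e-log-hom-props-2}. Applying Cauchy--Schwarz with respect to the inner product $\iprod{F''(x)\cdot}{\cdot}$ gives $\iprod{F''(x)x}{u} \leq \norm{x}_x \norm{u}_x$, and $\norm{x}_x^2 = \iprod{F''(x)x}{x} = \iprod{-F'(x)}{x} = \vartheta$ by~\eqref{e-log-hom-props-2}. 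Hence $\iprod{-F'(x)}{u} \leq \sqrt{\vartheta}\,\norm{u}_x$, which is the first inequality (note that this does not even require $u\in K$).

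For the right inequality, I would use the containment of the unit Dikin ellipsoid in $K$, expressed in~\eqref{e-dikin} as $\sigma_x(h) \leq \norm{h}_x$ for all $h$. Apply this with $h = -u$ (or equivalently use $\sigma_x(-u) \le \norm{u}_x$, which follows from applying~\eqref{e-dikin} to $-u$ and noting $\norm{-u}_x = \norm{u}_x$): by definition of the Minkowski gauge, $\sigma_x(-u)\, x + u \in K$, and since $-F'(x) \in K^*$ we get
\[
0 \leq \iprod{-F'(x)}{\sigma_x(-u)x + u} = \sigma_x(-u)\iprod{-F'(x)}{x} + \iprod{-F'(x)}{u} = -\vartheta\,\sigma_x(-u) + \iprod{-F'(x)}{u}.
\]
Wait --- this would give $\iprod{-F'(x)}{u} \geq \vartheta\,\sigma_x(-u)$, which is the wrong direction for what I want. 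Instead, I should bound $\norm{u}_x$ from above. Since $u \in K$, the point $u$ itself lies in $K$, but more usefully: apply~\eqref{e-neg-curv-grad}-type reasoning is unavailable without negative curvature, so I instead bound $\norm{u}_x^2 = \iprod{F''(x)u}{u}$ and compare it to $\iprod{-F'(x)}{u}^2$. The cleanest route: since $u \in K$, we have $\sigma_x(-u) = 0$ (because $-(-u) = u \in K$ means $0\cdot x - (-u) = u \in K$, so the infimum defining $\sigma_x(-u)$ is $0$). Hmm, that gives nothing directly either.

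The key obstacle is getting the factor $\sqrt{\vartheta}$ (not $\vartheta$) in the upper bound on $\norm{u}_x$. The right approach is: for $u \in K$, bound $\norm{u}_x \le \sqrt{\vartheta}\,\iprod{-F'(x)}{u}$ by using that $u / \iprod{-F'(x)}{u}$ lies in a suitable slice. Concretely, set $\beta := \iprod{-F'(x)}{u} > 0$ and $v := u/\beta$, so $\iprod{-F'(x)}{v} = 1$; I must show $\norm{v}_x \le \sqrt{\vartheta}$. Consider the function $\alpha \mapsto F(x + \alpha(v - x/\vartheta \cdot \vartheta))$... more carefully, since $\iprod{-F'(x)}{x} = \vartheta$, the vector $w := v - x/\vartheta$ satisfies $\iprod{-F'(x)}{w} = 1 - 1 = 0$, i.e., $w$ is tangent to the level set of $F$ at $x$. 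Then $\norm{v}_x^2 = \norm{w + x/\vartheta}_x^2 = \norm{w}_x^2 + 2\iprod{F''(x)w}{x}/\vartheta + \norm{x}_x^2/\vartheta^2$; since $F''(x)x = -F'(x)$, the cross term is $2\iprod{-F'(x)}{w}/\vartheta \cdot(-1)$... let me recompute: $\iprod{F''(x)w}{x} = \iprod{w}{F''(x)x} = \iprod{w}{-F'(x)} = 0$. And $\norm{x}_x^2/\vartheta^2 = \vartheta/\vartheta^2 = 1/\vartheta$. So $\norm{v}_x^2 = \norm{w}_x^2 + 1/\vartheta$. It remains to show $\norm{w}_x^2 \le \vartheta - 1/\vartheta = (\vartheta^2-1)/\vartheta$, or perhaps the bound should be stated as $\norm{w}_x \le \sqrt{\vartheta}$ directly with a looser constant; I expect this to follow from self-concordance applied along the ray from $x$ through $v$ hitting the boundary of $K$, combined with logarithmic homogeneity (a point $v$ with $\iprod{-F'(x)}{v}=1$ cannot be ``too far'' from $x$ in the local norm because $-F'(x) \in \inte(K^*)$ forces $v$ to stay within a bounded slice of $K$). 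This final estimate --- bounding $\norm{v}_x$ by $\sqrt{\vartheta}$ for $v$ in the hyperplane $\iprod{-F'(x)}{v}=1$ --- is the main work, and I would carry it out using the Dikin ellipsoid property~\eqref{e-dikin} together with the identity $\iprod{-F'(x)}{x} = \vartheta$ to show $v$ lies in a dilate of the Dikin ellipsoid.
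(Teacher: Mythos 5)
Your proof of the \emph{left} inequality is correct and matches the paper's: write $-F'(x)=F''(x)x$, apply Cauchy--Schwarz in the $F''(x)$-inner product, and use $\|x\|_x^2=\vartheta$. Nothing to add there.

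For the \emph{right} inequality, two issues. First, you have misread the target. The chain $\iprod{-F'(x)}{u}\leq\sqrt{\vartheta}\norm{u}_x\leq\sqrt{\vartheta}\iprod{-F'(x)}{u}$ has the factor $\sqrt{\vartheta}$ on \emph{both} sides of the second inequality, so after cancellation the claim is simply $\norm{u}_x\leq\iprod{-F'(x)}{u}$, not $\norm{u}_x\leq\sqrt{\vartheta}\iprod{-F'(x)}{u}$. In your normalization $v=u/\beta$ with $\beta=\iprod{-F'(x)}{u}$, this means you must show $\norm{v}_x\leq 1$, not $\norm{v}_x\leq\sqrt{\vartheta}$; equivalently, with your (correct) Pythagorean decomposition $\norm{v}_x^2=\norm{w}_x^2+1/\vartheta$, you need $\norm{w}_x^2\leq(\vartheta-1)/\vartheta$, not $\vartheta-1/\vartheta$. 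So even if your sketch were completed as stated, it would establish something strictly weaker than the lemma.

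Second, the proof genuinely stops short: the final estimate you defer (``bounding $\norm{v}_x$ for $v$ in the slice $\iprod{-F'(x)}{v}=1$'') is the entire content of the right inequality, and the Dikin-ellipsoid inclusion $E_{x,1}\subseteq K$ alone does not give it — it bounds the gauge $\sigma_x$ by $\norm{\cdot}_x$, which goes the wrong way, as your own first attempt illustrated. The inequality $\norm{u}_x\leq\iprod{-F'(x)}{u}$ for $u$ in the recession cone is a separate, nontrivial consequence of self-concordance (one can obtain it by applying the one-dimensional self-concordance lower bound on $\phi''$ along $\phi(t)=F(x+tu)$, letting $t\to\infty$, and using that $\phi'(t)\to 0^-$ by logarithmic homogeneity); the paper does not reprove it but cites it directly (Nesterov--Nemirovskii \cite{NN1994}, Corollary 2.3.1; also \cite{NT2016}, Theorem 1.1, and \cite{Nesterov2018}, Theorem 5.1.14). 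Either supply that one-dimensional argument explicitly, or cite one of these references as the paper does.
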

 
\begin{proof}
The left-hand inequality follows from 
$\iprod{F'(x)}{F''(x)^{-1}F'(x)} = \vartheta$ and the Cauchy--Schwarz 
inequality.
The right-hand-side is Corollary~2.3.1 in \cite{NN1994}  
(alternatively, see 
Theorem~1.1 in \cite{NT2016} or Theorem 5.1.14 in \cite{Nesterov2018}).
\end{proof}
 
\begin{thm}
\label{thm:5.1}
Let $F$ be a $\vartheta$-LHSCB for $K$. Then, for every pair $(x, s) \in \inte(K) \oplus \inte(K^*)$ we have
\begin{equation}
\label{eq:4.2}
\frac{\lambda_\mathrm{max}^{1/2}(F''(x) F_*''(s))}{\delta_F(x,s)} 
\leq 2+\frac{2(\vartheta-1)}{\vartheta(\mu\tilde{\mu}-1)+1} = 2+ \frac{2(\vartheta-1)}{\gamma_\mathrm G (x,s) +1}.
\end{equation}
Therefore, $\widecheck{\xi}_F \leq 2\vartheta.$
Moreover, every pair $(x,s)$ for which $\mu\tilde\mu \geq 2$
satisfies
\begin{equation}
\label{eq:4.3}
\frac{\lambda_\mathrm{max}^{1/2}(F''(x) F_*''(s))}{\delta_F(x,s)} \leq 4.
\end{equation}
\end{thm}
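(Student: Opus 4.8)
The plan is to bound $\lambda_\mathrm{max}^{1/2}(F''(x)F_*''(s))$ from above by combining Proposition~\ref{prop:5.2} with a comparison between $F''(\tilde x)$ and $F''(x)$, and then to relate everything back to $\delta_F(x,s)$ through Lemma~\ref{lem:5.1}. First I would observe that $F_*''(s) = F''(\tilde x)^{-1}$ by~\eqref{e-dual-hess}, so $\lambda_\mathrm{max}(F''(x)F_*''(s)) = \lambda_\mathrm{max}(F''(\tilde x)^{-1}F''(x))$, and by Proposition~\ref{prop:5.2} this is at most $4\vartheta^2\tilde\mu^2$, hence $\lambda_\mathrm{max}^{1/2}(F''(x)F_*''(s)) \leq 2\vartheta\tilde\mu$. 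This already is not enough on its own, because $\tilde\mu$ can be large relative to $\delta_F$; the finer bound~\eqref{eq:4.2} must extract the factor $1/\delta_F(x,s)$ in a way that keeps the leading behavior linear in $\vartheta$ rather than quadratic.

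The key idea I would pursue is to split the factor $\vartheta$ more carefully. Rewrite $\delta_F(x,s) = \langle F'(x),F_*'(s)\rangle + \vartheta(\vartheta-1)/\langle s,x\rangle = \vartheta\tilde\mu + \vartheta(\vartheta-1)/(\vartheta\mu)$ using~\eqref{e-delta-F}. Dividing the crude bound $2\vartheta\tilde\mu$ by this gives
\[
\frac{\lambda_\mathrm{max}^{1/2}(F''(x)F_*''(s))}{\delta_F(x,s)}
\leq \frac{2\vartheta\tilde\mu}{\vartheta\tilde\mu + (\vartheta-1)/\mu},
\]
which is at most $2$. But this only yields $\widecheck\xi_F\leq 2$, contradicting the stated $2\vartheta$ unless I have miscounted — so I expect the actual argument routes differently and the $2\vartheta$ bound comes from a genuinely weaker application, namely using $\lambda_\mathrm{max}^{1/2} \leq 2\vartheta\tilde\mu$ together with $\delta_F(x,s)\geq \vartheta\tilde\mu \geq \tilde\mu$ (the latter since $\vartheta\geq 1$), giving the ratio $\leq 2\vartheta$. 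For the sharper~\eqref{eq:4.2} I would instead bound $\lambda_\mathrm{max}^{1/2}(F''(x)F_*''(s))$ by a quantity of the form $\sigma_{\tilde x}(x) \cdot(\text{something})$ or by $2\sigma_x(\tilde x)$-type estimates, invoking Lemma~\ref{lem:5.1} to turn Minkowski-gauge terms into inner-product terms: from Lemma~\ref{lem:5.1} applied at $x$ with $u=\tilde x$, we get $\vartheta = \langle -F'(x),\tilde x\rangle \geq \sqrt\vartheta\,\|\tilde x\|_x$, hence $\|\tilde x\|_x \leq \sqrt\vartheta$, while $\sigma_x(\tilde x)\leq \|\tilde x\|_x$; and similarly a lower bound $\|\tilde x\|_x \geq \langle -F'(x),\tilde x\rangle/\sqrt\vartheta = \sqrt\vartheta\,\tilde\mu$. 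Then $\lambda_\mathrm{max}^{1/2}(F''(x)F_*''(s))$ should be controllable by $2\vartheta\tilde\mu/\sqrt{\vartheta}$ or similar, and matching the denominator's two terms $\vartheta\tilde\mu$ and $(\vartheta-1)/\mu$ separately produces the additive split $2 + 2(\vartheta-1)/(\gamma_\mathrm G(x,s)+1)$.

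For the last statement~\eqref{eq:4.3}, I would simply substitute $\mu\tilde\mu\geq 2$ into~\eqref{eq:4.2}: then $\gamma_\mathrm G(x,s)+1 = \vartheta(\mu\tilde\mu-1)+1 \geq \vartheta - 1$ (in fact $\geq \vartheta$ when $\mu\tilde\mu\geq 2$, since $\vartheta(\mu\tilde\mu-1)+1 \geq \vartheta+1$), so the fraction $2(\vartheta-1)/(\gamma_\mathrm G(x,s)+1) \leq 2(\vartheta-1)/(\vartheta-1) = 2$, and the whole expression is at most $2+2 = 4$. The main obstacle I anticipate is getting the constant in the leading term of~\eqref{eq:4.2} down to exactly $2$ rather than something like $2\vartheta$: this requires the bound on $\lambda_\mathrm{max}^{1/2}(F''(x)F_*''(s))$ to be linear (not quadratic) in $\vartheta$ before dividing, which means Proposition~\ref{prop:5.2}'s factor $4\vartheta^2$ must be paired with a $\tilde\mu$ (or $\mu$) that is itself $O(1/\vartheta)$ in the regime that matters — precisely the content that Lemma~\ref{lem:5.1} supplies via $\|\tilde x\|_x \geq \sqrt\vartheta\,\tilde\mu$ combined with $\|\tilde x\|_x \leq \sqrt\vartheta$, forcing $\tilde\mu \leq 1$ and more. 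Assembling these pieces in the right order so the $\vartheta^2$ collapses to $\vartheta$ and then to a constant plus a vanishing-in-$\vartheta\mu\tilde\mu$ term is the delicate bookkeeping step.
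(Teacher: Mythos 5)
Your first step is correct and matches the paper's: applying Proposition~\ref{prop:5.2} together with $F_*''(s)=F''(\tilde x)^{-1}$ gives $\lambda_\mathrm{max}^{1/2}(F''(x)F_*''(s))\leq 2\vartheta\tilde\mu$. The real gap is in your formula for $\delta_F(x,s)$. From the primary definition $\delta_F = (\gamma_\mathrm G+1)/\mu = \bigl(\vartheta(\mu\tilde\mu-1)+1\bigr)/\mu$ one obtains $\delta_F = \vartheta\tilde\mu - (\vartheta-1)/\mu$, \emph{not} $\vartheta\tilde\mu + (\vartheta-1)/\mu$. (The third expression in~\eqref{e-delta-F} has a sign typo; compare with the form $\delta_F = n\tilde\mu - (n-1)/\mu$ actually used in the proof of Theorem~\ref{thm:2}.) Substituting the wrong sign is what produced your suspicious ratio bound of $2$. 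You correctly sensed that something was off, but misdiagnosed the source, abandoned the computation, and sketched an uncarried-out Minkowski-gauge route via Lemma~\ref{lem:5.1}; that detour is neither needed nor what the paper does.

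With the correct $\delta_F$, the very computation you started finishes the proof: the ratio $2\vartheta\tilde\mu/\delta_F(x,s)$ equals $\frac{2\vartheta\mu\tilde\mu}{\vartheta(\mu\tilde\mu-1)+1}$, and polynomial division gives
\[
\frac{2\vartheta\mu\tilde\mu}{\vartheta(\mu\tilde\mu-1)+1} = 2 + \frac{2(\vartheta-1)}{\vartheta(\mu\tilde\mu-1)+1},
\]
which is exactly~\eqref{eq:4.2}. The bound $\widecheck{\xi}_F\leq 2\vartheta$ then follows immediately from~\eqref{eq:4.2} and $\gamma_\mathrm G(x,s)\geq 0$ (Lemma~\ref{lem:intro1}); your auxiliary claim $\delta_F\geq\vartheta\tilde\mu$ is actually false for $\vartheta>1$, so do not rely on it. Your derivation of~\eqref{eq:4.3} from~\eqref{eq:4.2} is fine and matches the paper's.
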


\begin{proof}
Let $K$, $F$ and the pair $(x,s)$ be given as in the assumptions of the statement.
We apply Proposition~\ref{prop:5.2} and deduce
\[
\frac{\lambda_\mathrm{max}^{1/2}(F''(x) F_*''(s))}{\delta_F(x,s)} 
\leq  \frac{2\vartheta\mu\tilde\mu}{1+ \vartheta(\mu\tilde\mu-1)} 
= 2+\frac{2(\vartheta-1)}{\vartheta(\mu\tilde{\mu}-1)+1}.
\]
Since $\mu\tilde{\mu} \geq 1$ for every primal--dual interior pair, the bound $\widecheck{\xi}_F \leq 2\vartheta$ follows.
If $\mu\tilde{\mu} \geq 2$, the right-hand side of~\eqref{eq:4.2} is 
maximized by the smallest value of $\mu\tilde{\mu}$, and~\eqref{eq:4.3} follows.
\end{proof}

\subsection{Local $\widecheck{\xi}_F$ and $\xi_F$} 
Note that the neighbourhood defined by the condition $\mu \tilde{\mu} \leq 2$ 
or, equivalently, $\gamma_\mathrm G(x,s) \leq \vartheta$
(mentioned in Theorem~\ref{thm:5.1}) is much larger than the one defined by 
the Dikin ellipsoid. Moreover, the boundary of the  $\mu \tilde{\mu} \leq 2$
neighborhood approximates the shape of the feasible region much better than 
the Dikin ellipsoid.  For an illustration, see 
Figure~\ref{fig:banded_toeplitz}. 
\begin{figure}
\includegraphics[width=0.4\textwidth]{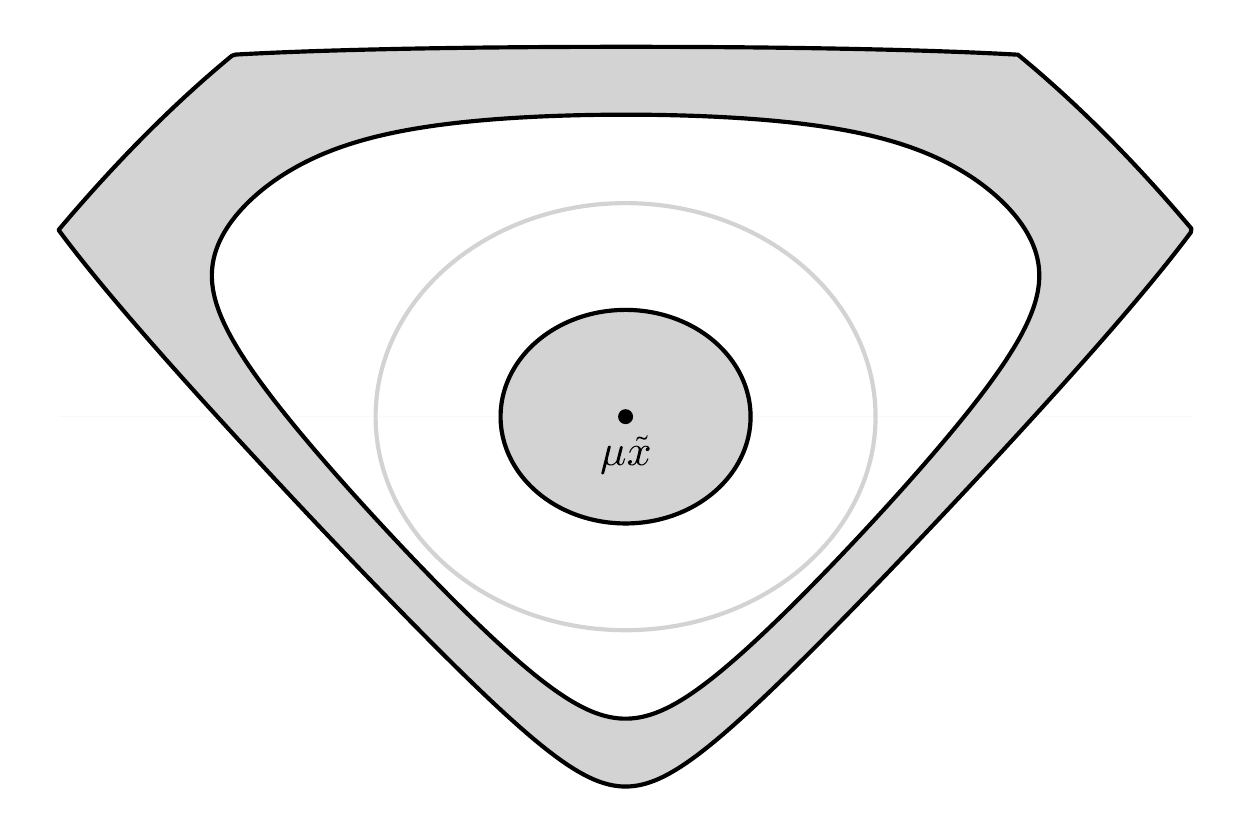}
\caption{A slice of the cone of positive semidefinite tridiagonal Toeplitz 
matrices.
The boundary of the unit Dikin ellipsoid is drawn in gray.
The Dikin ellipsoid with radius $1/2$, where it is known that 
$\widecheck{\xi}_F(x,s) \leq 3$, is shaded.  The region defined by 
$\mu\tilde \mu\geq 2$, where $\widecheck{\xi}_F(x,s) \leq 4$ 
(see Theorem~\ref{thm:5.1}) is also shaded.}
\label{fig:banded_toeplitz}
\end{figure}
These considerations motivate us to look at the
complexity measure $\widecheck{\xi}_F$ locally. We define
\[
\widecheck{\xi}_F(x,s) := 
\frac{\lambda_\mathrm{max}^{1/2}(F''(x) F_*''(s))}{\delta_F(x,s)}.
\]
This is the smallest $\xi$ for which $\widecheck \cT(x,s;\xi)$ is
not empty.
Similarly, we define $\xi_F(x,s)$ as the smallest $\xi$ for which 
$\mathcal T(x,s;\xi)$ is not empty.  
(With these definitions,
$\widecheck \xi_F = \sup_{x,s} {\widecheck \xi_F(x,s)}$ 
and $\xi_F = \sup_{x,s} {\xi_F(x,s)}$.)
In the proof of Theorem~\ref{thm:3.5} we showed that for self-scaled
barriers $F$ and every pair of interior points $x,s$,
the Nesterov--Todd scaling is in $\mathcal T(x,s;\xi)$
for $\xi = \xi_F(x,s)$, so it is optimal for the local measure 
$\xi_F(x,s)$ as well as for the global measure $\xi_F$.
The following list summarizes properties of the local complexity 
measure for $\vartheta$-LHSCBs $F$.
\begin{itemize}
\item
${\xi}_F(x,s) < 1.2115$ for all
$(x,s)$ in a small neighborhood of the central path~\cite[Theorem 6.8]{MT2014}.
\item
${\xi}_F(x,s) \leq 3$ for all $(x,s)$ such that 
$\mu\tilde{x}$ lies in the Dikin ellipsoid with radius $1/2
$ centered at $x$~\cite[Section 8.1]{Obro2019}.
\item
$\widecheck{\xi}_F(x,s) \leq 4$ for all $(x,s)$ such that 
$\mu\tilde{\mu} \geq 2$ (Theorem \ref{thm:5.1}).
\end{itemize}

\subsection{Worst-case primal--dual pairs} 
To gain more insight in the complexity measures $\xi_F$ and $\widecheck \xi_F$,
we now examine the optimization problem in the definition of
$\widecheck\xi_F$.  Using the Rayleigh-Ritz characterization of the 
maximum eigenvalue in \eqref{e-check-xi-eig}, 
we can express $\widecheck{\xi}_F$ in the following variational form:
\begin{eqnarray}
  \label{eq:6.1}
  \widecheck{\xi}_F^{-1} & = &
  \inf_{(x,s)\in\inte(K) \oplus \inte(K^*),v\in \R^n\setminus \{0\}} \sqrt{R(x,s,v)} \cdot \delta_F(x,s),
  \end{eqnarray}
  where
  \[
  R(x, s, v) := \frac{\iprod{F_*''(s)^{-1}v}{v}}{\iprod{F''(x)v}{v}}.
  \]
The worst-case primal--dual pairs $(x,s)$ are pairs that are  
optimal for the minimization problem in~\eqref{eq:6.1}.

In Figures~\ref{fig:nonnegorth} and~\ref{fig:banded_toeplitz2},
and the examples later in the section, it can be noted that the 
worst-case points $\hat x_i$ appear to have additional geometric
properties related to the next proposition. While we proved the
upper bounds $\widecheck{\xi}_F, \xi_F \leq 4/3$ for all negative curvature barriers $F$,
in general, the best bounds we have are $\widecheck{\xi}_F \leq 2\vartheta$
and $\xi_F \leq 9 \vartheta^2$. The best lower bounds we have are given
by optimal self-scaled barriers and for this family $\widecheck{\xi}_F = \xi_F = \rho_{\vartheta} < 4/3$.
At the time of this writing, we do not have examples of $\vartheta$-LHSCBs $F$ with $\widecheck{\xi}_F > \rho_{\vartheta}$,
or with $\xi_F > \rho_{\vartheta}$. 

\begin{prop} \label{p-v-general-F}
Consider the optimization problem
  \begin{equation}
    \label{eq:general_cone_prim}
    \begin{array}{ll}
    \textup{maximize} & \iprod{F''(\tilde x)v}{v}\\
    \textup{subject to} & \iprod{s}{v}= \mu\vartheta\\
    & v \in K,
    \end{array}
  \end{equation}
with variable $v$, where $\mu = \langle s,x\rangle/\vartheta$
and $x\in\inte(K)$ and $s\in\inte(K^*)$ are given. 
Suppose $v$ is feasible in~\eqref{eq:general_cone_prim}
and satisfies 
$\| v - \mu \tilde x \|^2_{\mu \tilde x} = \vartheta(\vartheta - 1)$.
Then $v$ is optimal.  Furthermore, the point
  \begin{equation}
  \label{eq:6.r}
   r:= \tau_{\vartheta} (s- \frac{1}{\mu\vartheta} F''(\tilde x)v)
 \end{equation}
is in $K^*$, and the point
\begin{equation}
  \label{e-hatx-def}
   \hat{x} := (1-\tau_{\vartheta})v + \tau_{\vartheta} \mu \tilde x 
\end{equation}
satisfies $\hat x \in \inte(K)$, $\iprod{s}{\hat x} = \mu\vartheta$, and
$\|\hat x - \mu\tilde x\|_{\mu\tilde x} = \tau_\vartheta/(\tau_\vartheta+1)$.
\end{prop}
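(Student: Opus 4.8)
The plan is to transfer everything to the central point $w := \mu\tilde x = -\mu F'_*(s)\in\inte(K)$. Using $F'(\tilde x)=-s$ from~\eqref{e-dual-grad} together with logarithmic homogeneity gives $F'(w)=\tfrac1\mu F'(\tilde x)=-\tfrac1\mu s$, hence $s=-\mu F'(w)=\mu F''(w)w$; likewise $F''(\tilde x)=\mu^2 F''(w)$, and $\langle s,w\rangle=\mu\langle -F'(w),w\rangle=\mu\vartheta$, so $w$ is itself feasible in~\eqref{eq:general_cone_prim}. Consequently the objective equals $\mu^2\|v\|_w^2$, feasibility of a point $v'$ is exactly the condition $\langle -F'(w),v'\rangle=\vartheta$, and the hypothesis $\|v-\mu\tilde x\|_{\mu\tilde x}^2=\vartheta(\vartheta-1)$ becomes $\|v-w\|_w^2=\vartheta(\vartheta-1)$, which — expanding $\|v-w\|_w^2=\|v\|_w^2-2\langle F''(w)v,w\rangle+\|w\|_w^2$ and using $\langle F''(w)v,w\rangle=\langle v,-F'(w)\rangle=\vartheta$ and $\|w\|_w^2=\langle -F'(w),w\rangle=\vartheta$ — simplifies to $\|v\|_w=\vartheta$.

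The crux, and the step where I expect essentially all of the work to lie, is a conic sharpening of the right-hand inequality of Lemma~\ref{lem:5.1}: for any $\vartheta$-LHSCB and any $x\in\inte(K)$, $v\in K$, one has $\|v\|_x\le\langle -F'(x),v\rangle$. I would prove this by restricting $F$ to the ray $\{x+tv:t\ge 0\}\subseteq\inte(K)$ and setting $\psi(t):=\langle F''(x+tv)v,v\rangle>0$. Logarithmic homogeneity forces $\langle -F'(x+tv),x+tv\rangle=\vartheta$ with $-F'(x+tv)\in K^*$ and $x,v\in K$, so $t\langle -F'(x+tv),v\rangle\le\vartheta$; hence $\langle F'(x+tv),v\rangle\to 0$ and therefore $\int_0^\infty\psi(t)\,dt=\langle -F'(x),v\rangle$. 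Self-concordance~\eqref{e-sc}, applied in the directions $v$ and $-v$, gives $|\psi'(t)|\le 2\psi(t)^{3/2}$, so $t\mapsto\psi(t)^{-1/2}$ is $1$-Lipschitz on $[0,\infty)$ and $\psi(t)\ge\psi(0)\big(1+t\sqrt{\psi(0)}\big)^{-2}$; integrating this bound yields $\langle -F'(x),v\rangle\ge\sqrt{\psi(0)}=\|v\|_x$. (This inequality may already be in the literature on logarithmically homogeneous barriers; if so I would simply cite it.)

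Given the lemma, optimality of $v$ is immediate: for every feasible $v'$, $\|v'\|_w\le\langle -F'(w),v'\rangle=\vartheta$, so the objective at $v'$ is $\mu^2\|v'\|_w^2\le\mu^2\vartheta^2$; and the first paragraph showed $\|v\|_w=\vartheta$, so $v$ attains $\mu^2\vartheta^2$ and is therefore optimal. Equivalently, after the reductions the lemma is the homogeneous certificate $\langle F''(\tilde x)v',v'\rangle\le\langle s,v'\rangle^2$ for all $v'\in K$, which on the feasible hyperplane $\langle s,v'\rangle=\mu\vartheta$ gives the bound and determines exactly when it is tight.

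Finally the two points. For $r$, I rewrite $r=\tau_\vartheta\mu\,F''(w)\big(w-\tfrac1\vartheta v\big)$ using $s=\mu F''(w)w$; then for $z\in K$, Cauchy–Schwarz in the $F''(w)$-inner product together with $\|v\|_w=\vartheta$ gives $\langle F''(w)v,z\rangle\le\vartheta\|z\|_w$, and the lemma gives $\tfrac1\vartheta\langle F''(w)v,z\rangle\le\|z\|_w\le\langle -F'(w),z\rangle=\langle F''(w)w,z\rangle$, whence $\langle r,z\rangle=\tau_\vartheta\mu\big(\langle F''(w)w,z\rangle-\tfrac1\vartheta\langle F''(w)v,z\rangle\big)\ge 0$; so $r\in K^*$. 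For $\hat x=(1-\tau_\vartheta)v+\tau_\vartheta\mu\tilde x$ we have $\hat x-w=(1-\tau_\vartheta)(v-w)$, so $\langle s,\hat x\rangle=\langle s,w\rangle=\mu\vartheta$ and $\|\hat x-\mu\tilde x\|_{\mu\tilde x}=(\tau_\vartheta-1)\|v-w\|_w=(\tau_\vartheta-1)\sqrt{\vartheta(\vartheta-1)}$, and substituting $\vartheta=\tau_\vartheta^2/(\tau_\vartheta^2-1)$ and $\vartheta-1=1/(\tau_\vartheta^2-1)$ (both from $\tau_\vartheta^2=\vartheta/(\vartheta-1)$) collapses this to $\tau_\vartheta/(\tau_\vartheta+1)<1$; hence $\hat x$ lies in the open unit Dikin ellipsoid at $w\in\inte(K)$, so $\hat x\in\inte(K)$. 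The only non-mechanical point in this last paragraph is remembering to use $\|v\|_w=\vartheta$ (not merely feasibility of $v$) in the Cauchy–Schwarz bound that produces $r\in K^*$.
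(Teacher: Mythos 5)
Your proposal is correct, and after the reformulation at $w=\mu\tilde x$ the optimality argument and the computation for $\hat x$ are essentially identical to the paper's (both hinge on $\|v-\mu\tilde x\|^2_{\mu\tilde x}=\frac1{\mu^2}\|v\|^2_{\tilde x}-\vartheta\le\vartheta(\vartheta-1)$ with equality iff $\|v\|_{\tilde x}=\mu\vartheta$). Two remarks on where you diverge. First, the ``crux'' inequality $\|v\|_x\le\langle -F'(x),v\rangle$ for $v\in K$ that you re-derive via the ODE for $\psi(t)=\langle F''(x+tv)v,v\rangle$ is already stated in the paper as the right-hand inequality of Lemma~\ref{lem:5.1}; your ODE argument is the standard proof and is correct, but you could simply cite that lemma. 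Second, and more substantively, your proof that $r\in K^*$ is genuinely different from the paper's. The paper derives it from KKT-type optimality conditions for~\eqref{eq:general_cone_prim}, stating that $-2F''(\tilde x)v+\lambda s\in K^*$ together with complementarity characterizes an optimal $v$ and reading off $\lambda=2\mu\vartheta$; this implicitly leans on the necessity of such multipliers (a constraint qualification), and since the objective is a convex quadratic being maximized the ``sufficiency'' phrasing in the paper is loose. Your route instead verifies $\langle r,z\rangle\ge0$ for all $z\in K$ by hand, writing $r=\tau_\vartheta\mu\,F''(w)\bigl(w-\tfrac1\vartheta v\bigr)$ and combining Cauchy--Schwarz in the $F''(w)$ inner product (using the extremality $\|v\|_w=\vartheta$, not mere feasibility) with Lemma~\ref{lem:5.1}. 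This is self-contained and does not invoke duality for a nonconvex maximization problem; it is arguably a cleaner way to establish $r\in K^*$.
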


\begin{proof}
We first note that the optimal value of~\eqref{eq:general_cone_prim}
is bounded above by $(\vartheta \mu)^2$:  from Lemma~\ref{lem:5.1}, 
\[
\|v\|_{\tilde x} \leq  \langle -F'(\tilde x), v\rangle = 
 \langle s, v\rangle = \vartheta\mu
\]
holds for any feasible $v$.  Hence, for any feasible $v$,
\[
\| v - \mu \tilde x \|^2_{\mu \tilde x} = 
    \frac{1}{\mu^2} \|v\|_{\tilde x}^2  + \vartheta - 2\frac{\iprod{s}{v}}{\mu}
    \leq \vartheta(\vartheta - 1),
\]
and if equality holds, $\|v\|_{\tilde x} = \vartheta\mu$ and $v$ is 
optimal in~\eqref{eq:general_cone_prim}.
The property $r\in K^*$ follows from the optimality conditions 
of~\eqref{eq:general_cone_prim}: $v$ is optimal if it is feasible and
\[
 -2 F''(\tilde x)v + \lambda s \in K^*, \qquad
 \iprod{-2 F''(\tilde x)v + \lambda s}{v} = 0
\]
for some $\lambda$.   From the second condition, $\lambda = 2\mu\vartheta$
if $\|v\|_{\tilde x}^2 = (\vartheta\mu)^2$.
The properties of $\hat x$ follow from
\[
    \norm{\hat{x}-\mu\tilde x}_{\mu\tilde x} = 
 (\tau_{\vartheta}-1) \norm{v-\mu\tilde x}_{\mu\tilde x}
   = (\tau_{\vartheta}-1) \sqrt{\vartheta(\vartheta-1)}
= \frac{\tau_\vartheta}{\tau_\vartheta +1} 
< 1.
\]
This shows that $\hat{x}$ lies in the unit Dikin ellipsoid with 
center $\mu\tilde x$ and therefore $\hat x \in K$.  
Furthermore, $\iprod{s}{\hat{x}} = 
 (1-\tau_{\vartheta})\mu\vartheta + \tau_{\vartheta} \mu 
\vartheta = \mu\vartheta$.
\end{proof}

For self-scaled barriers (Figure~\ref{fig:nonnegorth} and the left-hand
plot of Figure~\ref{fig:banded_toeplitz2}), the worst-case points $\hat x_i$
satisfy~\eqref{e-hatx-def} for points $v$ that are optimal 
in~\eqref{eq:general_cone_prim} with $\|v\|_{\tilde x} = \mu\vartheta$
(equivalently, $\|v-\mu\tilde x\|_{\mu\tilde x}^2 = \vartheta(\vartheta-1)$).
Next, we consider some examples with general LHSCBs.  In the first two
examples, points $v$ with the properties in the 
Proposition~\ref{p-v-general-F} exist, and corresponding points $\hat x$ 
in~\eqref{e-hatx-def} are worst-case points.
However, as Example~\ref{ex-banded-toep} shows, points $v$ with the required 
property do not always exist.

\begin{examp}
The exponential cone is defined as
\[
\Ke := \cl \{ x\in\R^3 : x_1 \geq x_2 \exp(x_3/x_2), \, x_2 > 0 \},
\]
and its dual cone is
\[
\Ke^* = \cl \{ z\in\R^3 : e \cdot z_1 \geq -z_3 \exp(z_2/z_3), 
 \, z_1 > 0, z_3 < 0 \}.
\]
A 3-LHSCB for $\Ke$ is $F(x):=-\ln(\psi(x))$, where 
\[
\psi(x):=x_1 x_2^2 \ln(x_1/x_2) - x_1 x_2 x_3.
\]
 Figure~\ref{fig:expcone_primal}
shows the intersection of $\Ke$ with the hyperplane 
$\{x\in \R^3 : \iprod{s}{x}=1\}$, where $s:=(1,1,-1)$.
\begin{figure}
  \begin{subfigure}{.45\textwidth}
    \centering
    \includegraphics[width=.9\linewidth]{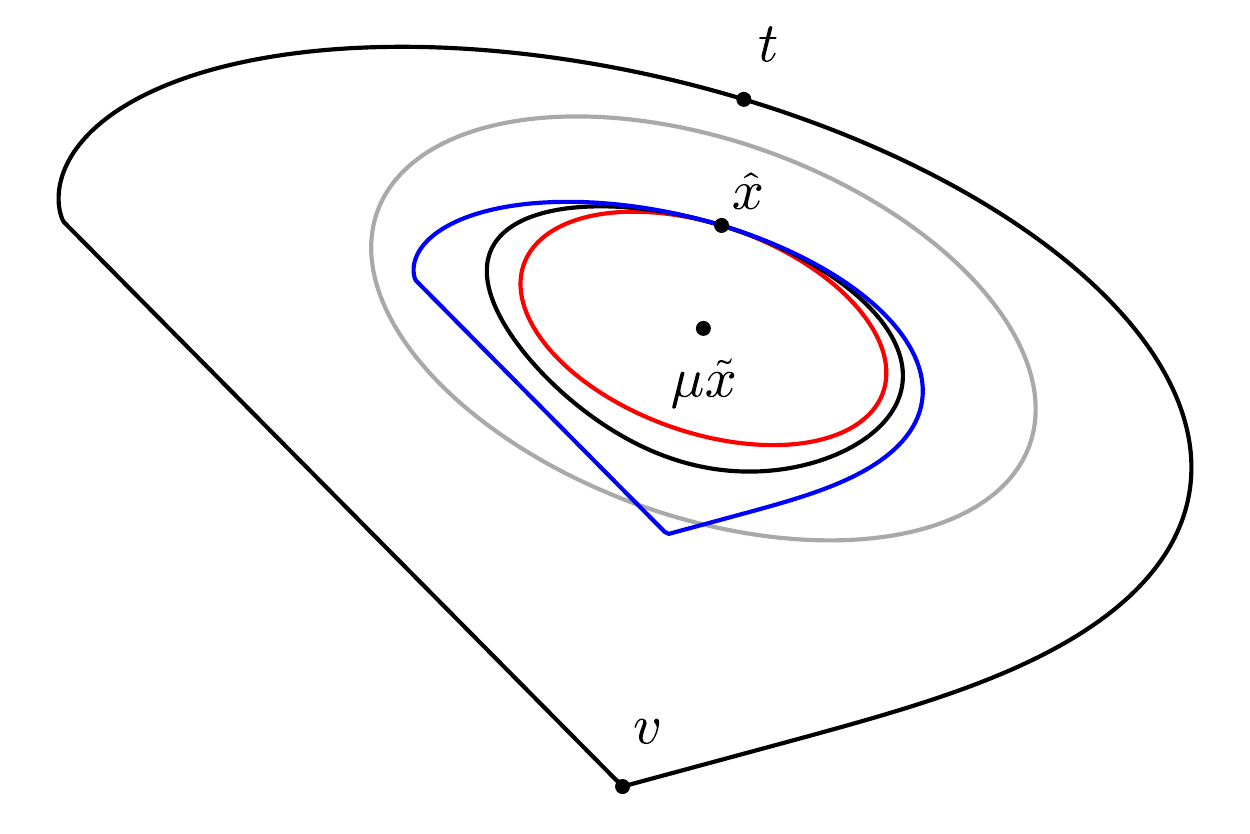}
    \caption{Slice of the primal cone $\Ke$.}
    \label{fig:expcone_primal}
  \end{subfigure}%
  \begin{subfigure}{.45\textwidth}
    \centering
    \includegraphics[width=.9\linewidth]{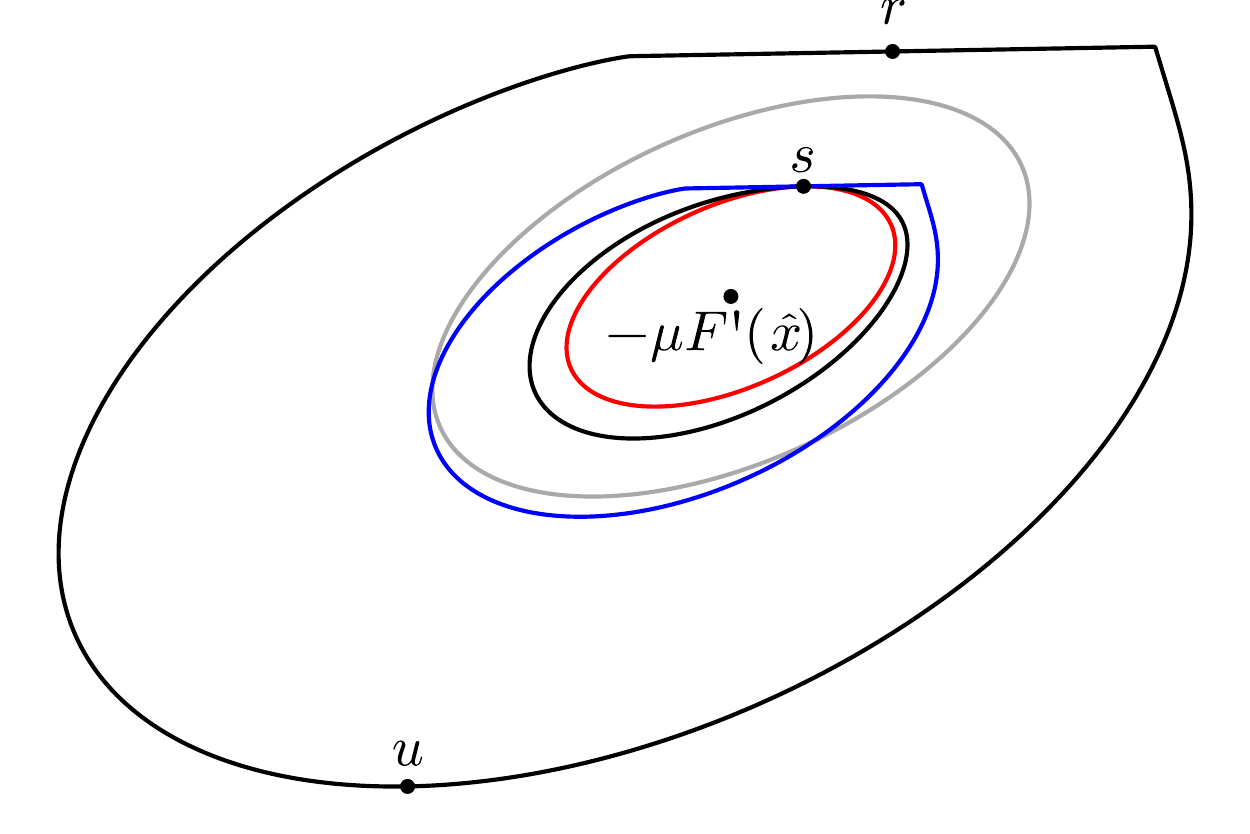}
    \caption{Slice of the dual cone $\Ke^*$.}
    \label{fig:expcone_dual}
  \end{subfigure}  
\caption{Primal and dual exponential cone intersected with the hyperplanes 
normal to $s$ and $\hat{x}$, respectively. 
The inner Dikin ellipsoids (red) 
have radius $\tau_{\vartheta}/(\tau_{\vartheta}+1)$ and include the 
pair of worst-case points $\hat{x}$ and $s$. The boundary of  the region 
$\gamma_{\mathrm G}(x,s)\leq 1/(\tau_{\vartheta}+1)$ is shown in black,
and the boundary of $\gamma_{\infty}(x,s)\leq 1/\tau_{\vartheta}$ in 
blue.  The points $(v,t)$ and $(u,r)$ lie on the boundaries
of $\Ke$ and $\Ke^*$, respectively.}
\label{fig:expcone}
\end{figure}
The extreme point $v:=(0,0,-1)\in \Ke$ satisfies
$\| v - \mu\tilde x \|^2_{\tilde x} = \vartheta(\vartheta-1)$.
The point $\hat x$ defined in~\eqref{e-hatx-def}
lies on the inner Dikin ellipsoid of radius 
$\tau_{\vartheta}/(\tau_{\vartheta}+1)$.
It was verified numerically that $(\hat{x},s,v)$ satisfies 
the stationarity conditions 
for (\ref{eq:6.1}),
and that
$\widecheck{\xi}_F(\hat{x},s)= \rho_\vartheta$.
Furthermore, $r\in \Ke^*$, where $r$ is defined in \eqref{eq:6.r}. 
This point is shown in Figure~\ref{fig:expcone_dual}.

From the primal--dual symmetry in the definition of 
$\widecheck{\xi}(F;\hat{x},s)$, we can also consider worst-case pairs in the 
dual cone $\Ke^*$ intersected with the hyperplane 
$\{y\in \R^3 : \iprod{y}{\hat{x}}=1\}$;
see Figure~\ref{fig:expcone_dual}.
The points
\[
  u := \frac{1}{(1-\tau_{\vartheta})} \left(s - \tau_{\vartheta} \mu F'(\hat{x})\right), \hspace{1cm}
  t := \tau_{\vartheta} \left(\hat{x} - 
 \frac{1} {\mu\vartheta} F''(\hat{x})^{-1}u\right)
\]
satisfy $u \in \Ke^*$ and $t \in \Ke$,  and
$\|u - \mu \tilde s\|_{\mu \tilde s}^* = (\vartheta(\vartheta-1))^{1/2}$,
where 
$\|y\|_{\mu\tilde s}^* := \langle y, F_*''(\mu\tilde s) y\rangle^{1/2}$.
The points $(s,\hat{x},u)$ satisfy the stationarity conditions 
for (\ref{eq:6.1}).

As an extension we also considered a product of exponential cones
$K := K_1 \oplus K_2 \oplus \cdots \oplus K_n$ where each $K_i := \Ke$. 
The barrier $F$ is the summation of the barriers for each $K_i$ 
from the previous example, and has parameter $\vartheta = 3n$. 
Assume $s\in\inte{K^*}$. We define $v := (v_1, v_2, \ldots, v_n) \in K$ where
$v_k := (0,0,\beta)$ and $v_i := (0,0,0)$ for $i\neq k$,
and $\beta$ is chosen such that $\iprod{s}{v}=\mu\vartheta$. Then
$\| v-\mu \tilde x\|_{\mu \tilde x}^2 = \vartheta(\vartheta-1)$.
  Furthermore, the point $\hat x$ defined in~\eqref{e-hatx-def}
  satisfies 
$\widecheck{\xi}_F(\hat{x},s)= \rho_\vartheta$
  and $(\hat{x},s,v)$ satisfies 
  the stationarity assumptions 
  for (\ref{eq:6.1}).
\end{examp}

\begin{examp} \label{ex-toep-wc}
We consider the cone of $3\times 3$ positive semidefinite Toeplitz
matrices with the standard $3$-LHSCB log-det barrier.
Figure~\ref{fig:toeplitz_primal} shows the intersection of the cone 
with the hyperplane defined by $\Tr(sx) = 1$, where $s$ is the identity matrix.
\begin{figure}
\begin{subfigure}{.45\textwidth}
    \includegraphics[width=1.0\textwidth]{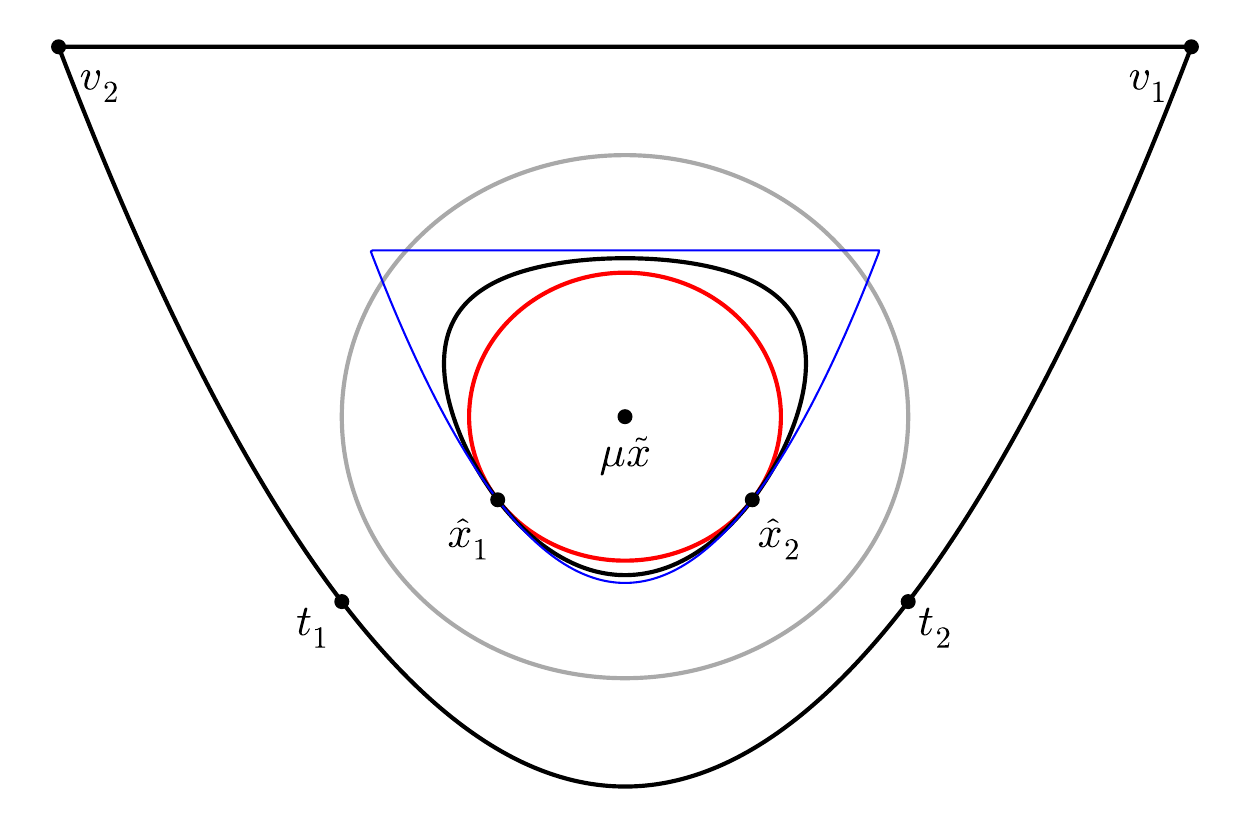}
\caption{Slice of $3\times 3$ PSD Toeplitz matrix cone.} 
\label{fig:toeplitz_primal}
\end{subfigure}
\begin{subfigure}{.45\textwidth}
  \includegraphics[width=1.1\textwidth,trim=2cm 5cm 2cm 3cm,clip]{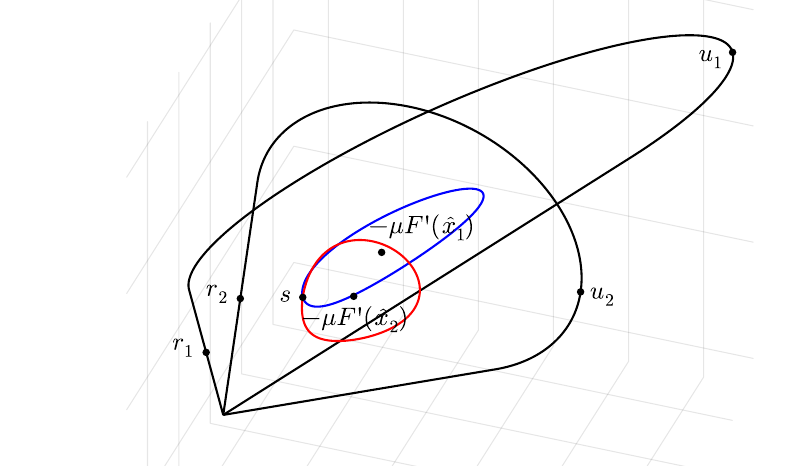}
\caption{Two slices of the dual cone.} 
\label{fig:toeplitz_dual}
\end{subfigure}
\caption{\emph{Left.} Cone of $3\times 3$ positive semidefinite Toeplitz 
matrices cone intersected with the hyperplane defind by $\Tr(sx) = 1$,
where $s$ is the identity matrix.
The unit Dikin ellipsoid is shown in gray. The inner Dikin ellipsoid (red) 
has radius $\tau_{\vartheta}/(\tau_{\vartheta}+1)$ and includes
$\hat{x}_i=(1-\tau_{\vartheta})v_i + \tau_{\vartheta}\mu\tilde x$  
for $i \in \{1,2\}$.  The boundary of  the region
$\gamma_{\mathrm G}(x,s)\leq 1/(\tau_{\vartheta}+1)$ is shown in black,
and the boundary of $\gamma_{\infty}(x,s)\leq 1/\tau_{\vartheta}$ in blue. 
\emph{Right.} Dual cone intersected with the two hyperplanes
  $\{y \in \R^3 : \iprod{y}{\hat{x}_i}=\mu \vartheta\}$ for $i \in \{1,2\}$.
  The smallest $\gamma_{\mathrm G}$ neighborhoods including $s$ and centered at
  $-\mu F'(\hat{x}_1)$ and $-\mu F'(\hat{x}_2)$ are shown in blue and red, 
  respectively.  The two hyperplanes intersect at $s$.
}
\end{figure}
The two extreme points are the matrices are
  \[
  v_1 := \left[\begin{array}{rrr}1 & 1 & 1\\ 1 & 1 & 1\\ 1 & 1 & 1\end{array}\right], \qquad
  v_2 := \left[\begin{array}{rrr}1 & -1 & 1\\ -1 & 1 & -1\\ 1 & -1 & 1\end{array}\right],
  \]
  and satisfy
 $\|v_1 - \mu \tilde x\|_{\mu \tilde x} 
= \|v_2 - \mu \tilde x\|_{\mu \tilde x} = (\vartheta(\vartheta - 1))^{1/2}$.
  The two stationary points on the inner Dikin ellipsoid are given by
  $\hat{x}_i = (1-\tau_{\vartheta})v_i + \tau_{\vartheta} \mu \tilde x$ 
  and satisfy 
$\widecheck{\xi}_F(\hat{x}_i,s) = \rho_\vartheta$
as well as the stationarity assumptions for (\ref{eq:6.1}).

Figure~\ref{fig:toeplitz_dual} shows $K^*$ intersected by the two 
  hyperplanes normal to $\hat{x}_1$ and $\hat{x}_2$, respectively. The points in the figure are defined as
  \[
  u_i := \frac{1}{(1-\tau_{\vartheta})} \left(s - \tau_{\vartheta} \mu F'(\hat{x}_i)\right), \hspace{1cm}
  t_i := \tau_{\vartheta} \left(\hat{x}_i -\frac{1}{\mu\vartheta} 
 F''(\hat{x}_i)^{-1}u\right), \quad i \in \{1,2\},
  \]
  and satisfy the conditions 
  for (\ref{eq:6.1}).
  Points $t_i \in K$ are shown in Figure~\ref{fig:toeplitz_primal}.
The points $r_i$ are defined as
  \[
  r_i:= \tau_{\vartheta} \left( s-
 \frac{1}{\mu \vartheta}F''(\tilde x)v_i\right), \quad i \in \{1,2\}.
  \]
  
\end{examp}

\begin{examp} \label{ex-banded-toep}
We consider the cone of $5\times 5$ positive semidefinite tridiagonal 
Toeplitz matrices, with the standard $5$-LHSCB log-det barrier.
Figure~\ref{fig:banded_toeplitz2} shows
the intersection of the cone with a hyperplane normal to the identity matrix. 
For this example, all points $v\in K$ on the hyperplane have 
$\|v-\mu\tilde x\|_{\mu \tilde x}^2 < \vartheta(\vartheta - 1)$, 
and there are no points $\hat{x}\in K$ on the hyperplane satisfying 
 $\widecheck{\xi}_F(\hat{x},s) = \rho_\vartheta$.
\end{examp}

\end{section}

\begin{section}{Conclusion}

\label{sec:conclusion}

We showed that $\xi_F \leq 4/3$ for all LHSCBs with negative curvature
and that the integral scaling is 
an element of $\cT(x,s;4/3)$.
An immediate consequence of this result 
is that the primal--dual potential reduction algorithm 
from~\cite{Tuncel2001} 
when implemented using the integral primal--dual scalings~\cite{MT2014} has
iteration complexity $O\left(\vartheta^{1/2}\ln(1/\epsilon)\right)$.
This result positively impacts other primal--dual algorithms, e.g., 
the predictor--corrector algorithm analyzed in~\cite{MT2014}. 
For negative curvature barriers, our results imply that in primal--dual
predictor-corrector algorithms, implemented with the integral scaling (or its suitable
approximations),
we may take significantly larger predictor steps and
still maintain $O\left(\vartheta^{1/2}\ln(1/\epsilon)\right)$ iteration 
complexity bounds.

Our result on negative curvature barriers
covers a large class of convex cones including hyperbolicity
cones arising from hyperbolic polynomials. However, the following problems
remain open:

\begin{enumerate}
\item
For every LHSCB $F$, characterize the pairs $(x,s)$ attaining the
value $\widecheck{\xi}_F$.
\item
For every LHSCB $F$, characterize the pairs $(x,s)$ attaining the
value $\xi_F$.
\item
Does every convex cone admit
an LHSCB with negative curvature?
\item
Does every convex cone admit an LHSCB $F$ with $\xi_F \leq 4/3$?
\end{enumerate}
  
  Question (3) was originally posed in \cite{NT2016}.
  If the answer to question (3) is ``yes,'' then our results imply
  that the answer to question (4) is also ``yes.''

  Another line of related future research concerns the investigation of characterizations of
  worst-case pairs $(x,s)$ (i.e., those which satisfy $\widecheck{\xi}_F(x,s) = \xi_F$) with respect
  to the properties of the barrier $F$ and the geometry of $K$ as suggested by 
many of the figures in the paper.
  
 \end{section}

\bibliographystyle{plain}
\bibliography{DahlTuncelVandenberghe2021}

@article {BD2022,
    AUTHOR = {Badenbroek, Riley and Dahl, Joachim},
     TITLE = {An algorithm for nonsymmetric conic optimization inspired by
              {MOSEK}},
   JOURNAL = {Optim. Methods Softw.},
  FJOURNAL = {Optimization Methods \& Software},
    VOLUME = {37},
      YEAR = {2022},
    NUMBER = {3},
     PAGES = {1027--1064},
}

@article {DahlA2022,
    AUTHOR = {Dahl, Joachim and Andersen, Erling D.},
     TITLE = {A primal-dual interior-point algorithm for nonsymmetric
              exponential-cone optimization},
   JOURNAL = {Math. Program.},
  FJOURNAL = {Mathematical Programming},
    VOLUME = {194},
      YEAR = {2022},
    NUMBER = {1-2},
     PAGES = {341--370},
}

@article{GoulartChen2024,
      title={Clarabel: An interior-point solver for conic programs with quadratic objectives}, 
      author={Paul J. Goulart and Yuwen Chen},
      journal = {arXiv preprint},
      volume = {2405.12762},
      year={2024},
      eprint={2405.12762},
      archivePrefix={arXiv},
      primaryClass={math.OC},
      archivePrefix={arXiv},
      url={https://arxiv.org/abs/2405.12762}, 
}

@article {Guler1997,
    AUTHOR = {G{\"u}ler, Osman},
     TITLE = {Hyperbolic polynomials and interior point methods for convex
              programming},
   JOURNAL = {Math. Oper. Res.},
  FJOURNAL = {Mathematics of Operations Research},
    VOLUME = {22},
      YEAR = {1997},
    NUMBER = {2},
     PAGES = {350--377},
}

@article {GulerTuncel1998,
    AUTHOR = {G\"uler, Osman and Tun\c{c}el, Levent},
     TITLE = {Characterization of the barrier parameter of homogeneous
              convex cones},
   JOURNAL = {Math. Programming},
  FJOURNAL = {Mathematical Programming},
    VOLUME = {81},
      YEAR = {1998},
    NUMBER = {1, Ser. A},
     PAGES = {55--76},
}

@article {HauserGuler2002,
    AUTHOR = {Hauser, Raphael A. and G{\"u}ler, Osman},
     TITLE = {Self-scaled barrier functions on symmetric cones and their
              classification},
   JOURNAL = {Found. Comput. Math.},
  FJOURNAL = {Foundations of Computational Mathematics. The Journal of the
              Society for the Foundations of Computational Mathematics},
    VOLUME = {2},
      YEAR = {2002},
    NUMBER = {2},
     PAGES = {121--143},
}

@article {KarimiT2020,
    AUTHOR = {Karimi, Mehdi and Tun\c{c}el, Levent},
     TITLE = {Primal-dual interior-point methods for domain-driven
              formulations},
   JOURNAL = {Math. Oper. Res.},
  FJOURNAL = {Mathematics of Operations Research},
    VOLUME = {45},
      YEAR = {2020},
    NUMBER = {2},
     PAGES = {591--621},
   }

@article {NemirovskiT2005,
    AUTHOR = {Nemirovski, Arkadi and Tun\c{c}el, Levent},
     TITLE = {``{C}one-free'' primal-dual path-following and
              potential-reduction polynomial time interior-point methods},
   JOURNAL = {Math. Program.},
  FJOURNAL = {Mathematical Programming. A Publication of the Mathematical
              Programming Society},
    VOLUME = {102},
      YEAR = {2005},
    NUMBER = {2},
     PAGES = {261--294},
     }

@book {Nesterov2018,
    AUTHOR = {Nesterov, Yurii},
     TITLE = {Lectures on convex optimization},
    SERIES = {Springer Optimization and Its Applications},
    VOLUME = {137},
 PUBLISHER = {Springer},
      YEAR = {2018},
}

@article {Nesterov2012,
    AUTHOR = {Nesterov, Yurii},
     TITLE = {Towards non-symmetric conic optimization},
   JOURNAL = {Optim. Methods Softw.},
  FJOURNAL = {Optimization Methods \& Software},
    VOLUME = {27},
      YEAR = {2012},
    NUMBER = {4-5},
     PAGES = {893--917},
}

@book {NN1994,
    AUTHOR = {Nesterov, Yurii and Nemirovskii, Arkadii},
     TITLE = {Interior-point polynomial algorithms in convex programming},
    SERIES = {SIAM Studies in Applied Mathematics},
    VOLUME = {13},
 PUBLISHER = {Society for Industrial and Applied Mathematics (SIAM),
              Philadelphia, PA},
      YEAR = {1994},
}

@article {NT1998,
    AUTHOR = {Nesterov, Yurii and Todd, Michael J.},
     TITLE = {Primal-dual interior-point methods for self-scaled cones},
   JOURNAL = {SIAM J. Optim.},
  FJOURNAL = {SIAM Journal on Optimization},
    VOLUME = {8},
      YEAR = {1998},
    NUMBER = {2},
     PAGES = {324--364},
}

@article {NT1997,
    AUTHOR = {Nesterov, Yurii and Todd, Michael J.},
     TITLE = {Self-scaled barriers and interior-point methods for convex
              programming},
   JOURNAL = {Math. Oper. Res.},
  FJOURNAL = {Mathematics of Operations Research},
    VOLUME = {22},
      YEAR = {1997},
    NUMBER = {1},
     PAGES = {1--42},
}

@article {NT2016,
    AUTHOR = {Nesterov, Yurii and Tun\c{c}el, Levent},
     TITLE = {Local superlinear convergence of polynomial-time
              interior-point methods for hyperbolicity cone optimization
              problems},
   JOURNAL = {SIAM J. Optim.},
  FJOURNAL = {SIAM Journal on Optimization},
    VOLUME = {26},
      YEAR = {2016},
    NUMBER = {1},
     PAGES = {139--170},
     }

@book {Renegar2001,
    AUTHOR = {Renegar, James},
     TITLE = {A mathematical view of interior-point methods in convex
              optimization},
    SERIES = {MPS/SIAM Series on Optimization},
 PUBLISHER = {Society for Industrial and Applied Mathematics (SIAM),
              Philadelphia, PA; Mathematical Programming Society (MPS),
              Philadelphia, PA},
      YEAR = {2001},
}

@techreport{Schmieta2000,
    author  = {Schmieta, Stefan H.},
    title   = {Complete classification of self-scaled barrier functions},
    institution = {Department of IEOR, Columbia University},
    year    = 2000
}

@article {SkaajaYe2015,
    AUTHOR = {Skajaa, Anders and Ye, Yinyu},
     TITLE = {A homogeneous interior-point algorithm for nonsymmetric convex
              conic optimization},
   JOURNAL = {Math. Program.},
  FJOURNAL = {Mathematical Programming},
    VOLUME = {150},
      YEAR = {2015},
    NUMBER = {2},
     PAGES = {391--422},
}

@article{Sturm1999,
    AUTHOR = {Sturm, Jos F.},
     TITLE = {Using {S}e{D}u{M}i 1.02, a {MATLAB} toolbox for optimization
              over symmetric cones},
   JOURNAL = {Optim. Methods Softw.},
  FJOURNAL = {Optimization Methods and Software},
    VOLUME = {11/12},
      YEAR = {1999},
    NUMBER = {1-4},
     PAGES = {625--653},
}

@article {Todd2001,
    AUTHOR = {Todd, Michael J.},
     TITLE = {Semidefinite optimization},
   JOURNAL = {Acta Numer.},
  FJOURNAL = {Acta Numerica},
    VOLUME = {10},
      YEAR = {2001},
     PAGES = {515--560},
}

@incollection {SDPT3-2012,
    AUTHOR = {Toh, Kim-Chuan and Todd, Michael J. and T\"{u}t\"{u}nc\"{u},
              Reha H.},
     TITLE = {On the implementation and usage of {SDPT}3---a {M}atlab
              software package for semidefinite-quadratic-linear
              programming, version 4.0},
 BOOKTITLE = {Handbook on semidefinite, conic and polynomial optimization},
    SERIES = {Internat. Ser. Oper. Res. Management Sci.},
    VOLUME = {166},
     PAGES = {715--754},
 PUBLISHER = {Springer, New York},
      YEAR = {2012},
}

@article {TV2023,
    AUTHOR = {Tun\c{c}el, Levent and Vandenberghe, Lieven},
     TITLE = {Linear optimization over homogeneous matrix cones},
   JOURNAL = {Acta Numer.},
  FJOURNAL = {Acta Numerica},
    VOLUME = {32},
      YEAR = {2023},
     PAGES = {675--747},
}

@article {Tuncel2001,
    AUTHOR = {Tun{\c{c}}el, Levent},
     TITLE = {Generalization of primal-dual interior-point methods to convex
              optimization problems in conic form},
   JOURNAL = {Found. Comput. Math.},
  FJOURNAL = {Foundations of Computational Mathematics. The Journal of the
              Society for the Foundations of Computational Mathematics},
    VOLUME = {1},
      YEAR = {2001},
    NUMBER = {3},
     PAGES = {229--254},
}

@article {Tuncel1998,
    AUTHOR = {Tun\c{c}el, Levent},
     TITLE = {Primal-dual symmetry and scale invariance of interior-point
              algorithms for convex optimization},
   JOURNAL = {Math. Oper. Res.},
  FJOURNAL = {Mathematics of Operations Research},
    VOLUME = {23},
      YEAR = {1998},
    NUMBER = {3},
     PAGES = {708--718},
     }

@article{
  ART:03,
  title={On implementing a primal-dual interior-point method for conic quadratic optimization},
  author={Andersen, Erling D and Roos, Kees and Terlaky, Tamas},
  journal={Mathematical Programming},
  volume={95},
  pages={249--277},
  year={2003},
  publisher={Springer}
}

@article{
  Fay:97,
  author={Leonid Faybusovich},
  title={Euclidean {J}ordan algebras and interior-point algorithms},
  journal={Positivity},
  volume=1,
  number=4,
  pages={331-357},
  year=1997
}

@article{
  ScA:03,
  author={S. H. Schmieta and F. Alizadeh},
  title={Extension of primal-dual interior point algorithms to symmetric
      cones},
  journal={Mathematical Programming},
  volume=96,
  pages={409-438},
  year=2003
}

@article{MT2014,
  title={Interior-point algorithms for convex optimization based
  on primal-dual metrics},
  author={Tor G. J. Myklebust and Levent Tun{\c{c}}el},
  journal={arXiv preprint arXiv:1411.2129},
  year={2014}
}

@mastersthesis{Obro2019,
  title={Conic optimization with exponential cones},
  author={Mikkel {\O}bro},
  type={Master's Thesis},
  school={DTU, Denmark},
  year={2019}
}

@book{
   Wri:97,
   author={Wright, Stephen J.},
   title={Primal-Dual Interior-Point Methods},
   publisher={SIAM},
   address={Philadelphia},
   year=1997
}

@article{
 AlG:03,
 author={Alizadeh, Farid and Goldfarb, Donald},
 title={Second-order cone programming},
 journal={Mathematical Programming Series B},
 volume = 95,
 year=2003,
 pages={3-51}
}

@article{
  ScA:01,
  author={Schmieta, S. H. and Alizadeh, Farid},
  title={Associative and {J}ordan algebras, and polynomial time
      interior-point algorithms for symmetric cones},
  journal={Mathematics of Operations Research},
  volume=26,
  number=3,
  pages={543-564},
  year=2001
}

@article{Fay:97b,
  title={Linear systems in {J}ordan algebras and primal-dual 
     interior-point algorithms},
  author={Faybusovich, Leonid},
  journal={Journal of computational and applied mathematics},
  volume={86},
  number={1},
  pages={149--175},
  year={1997},
  publisher={Elsevier}
}

@article{Stu:00,
  title={Similarity and other spectral relations for symmetric cones},
  author={Sturm, Jos F.},
  journal={Linear Algebra and its applications},
  volume={312},
  number={1-3},
  pages={135--154},
  year={2000},
  publisher={Elsevier}
}

@article{ChG:25,
  title={An Efficient Implementation of Interior-Point Methods for a Class of Nonsymmetric Cones},
  author={Chen, Yuwen and Goulart, Paul},
  journal={Journal of Optimization Theory and Applications},
  volume={204},
  number={2},
  pages={1--28},
  year={2025},
  publisher={Springer}
}

@article{ADV:10,
  title={Implementation of nonsymmetric interior-point methods for linear optimization over sparse matrix cones},
  author={Andersen, Martin S. and Dahl, Joachim and Vandenberghe, Lieven},
  journal={Mathematical Programming Computation},
  volume={2},
  number={3},
  pages={167--201},
  year={2010},
  publisher={Springer}
}

@article{PaY:22,
  title={{A}lfonso: Matlab package for nonsymmetric conic optimization},
  author={Papp, D{\'a}vid and Y{\i}ld{\i}z, Sercan},
  journal={INFORMS Journal on Computing},
  volume={34},
  number={1},
  pages={11--19},
  year={2022},
  publisher={INFORMS}
}

@phdthesis{Ser:15, 
  title={Algorithms for unsymmetric cone optimization and an implementation for problems with the exponential cone},
  author={Serrano, Santiago Akle},
  year={2015},
  school={Stanford University}
}

@book{FaK:94,
  title={Analysis on Symmetric Cones},
  author={Faraut, Jacques and Kor{\'a}nyi, Adam},
  year={1994},
  publisher={Oxford university press}
}
\end{document}